\theoremstyle{plain}
\newtheorem{case}{Case}
\newtheorem{thm}[subsection]{Theorem}
\newtheorem{lem}[subsection]{Lemma}
\newtheorem{prop}[subsection]{Proposition} 
\newtheorem{cor}[subsection]{Corollary}
\newtheorem{conj}[subsection]{Conjecture}
\theoremstyle{definition}
\newtheorem*{ack}{Acknowledgment}
\newtheorem{properties}[subsection]{Properties}
\newtheorem{definition}[subsection]{Definition} 
\newtheorem{example}[subsection]{Example}
\newtheorem{question}[subsection]{Question}
\newtheorem{problem}[subsection]{Problem}
\newtheorem{notation}[subsection]{Notation}
\theoremstyle{remark}
\newtheorem{rem}[subsection]{Remark}
 \renewcommand{\theenumi}{\alph{enumi}}%
 \renewcommand{\labelenumi}{(\theenumi)}%
 \renewcommand{\theenumi}{\Alph{enumi}}%
 \renewcommand{\labelenumi}{(\theenumi)}%
 \renewcommand{\theenumi}{\Alph{enumi}$^{\prime}$}%
 \renewcommand{\labelenumi}{(\theenumi)}%
 \renewcommand{\theenumi}{\roman{enumi}}%
 \renewcommand{\labelenumi}{(\theenumi)}%
 \renewcommand{\theenumi}{\Roman{enumi}}%
 \renewcommand{\labelenumi}{(\theenumi)}%
\newcommand{\one}{\mathbf{1}}
\newcommand{\A}{\mathcal{A}}
\newcommand{\FF}{\mathcal{F}}
\newcommand{\PP}{\mathcal{P}}
\newcommand{\QQ}{\mathcal{Q}}
\renewcommand{\H}{\mathcal{H}}
\newcommand{\aA}{\mathbb{A}}
\newcommand{\bB}{\mathbb{B}}
\newcommand{\gG}{\mathbb{G}}
\newcommand{\dD}{\mathbb{D}}
\newcommand{\hH}{\mathbb{H}}
\newcommand{\C}{\mathbb{C}}
\newcommand{\E}{\mathbb{E}}
\newcommand{\I}{\mathbb{I}}
\newcommand{\N}{\mathbb{N}}
\newcommand{\F}{\mathbb{F}}
\newcommand{\Q}{\mathbb{Q}}
\newcommand{\QP}{\mathcal{QP}}
\newcommand{\Z}{\mathbb{Z}}
\newcommand{\K}{\mathbb{K}}
\newcommand{\R}{\mathbb{R}}
\renewcommand{\P}{\mathbb{P}}
\newcommand{\T}{\mathbb{T}}
\newcommand{\D}{\Delta}
\newcommand{\e}{\varepsilon}
\newcommand{\g}{{\gamma}}
\newcommand{\G}{\Gamma}
\newcommand{\p}{\psi}
\newcommand{\s}{\sigma}
\renewcommand{\a}{{\alpha }}
\renewcommand{\b}{{\beta }}
\renewcommand{\l}{\lambda}
\DeclareMathOperator{\rank}{rank}
\DeclareMathOperator{\coker}{coker}
\DeclareMathOperator{\im}{im}
\DeclareMathOperator{\Hom}{Hom}
\DeclareMathOperator{\GL}{GL}
\DeclareMathOperator{\Mat}{Mat}
\DeclareMathOperator{\Char}{\operatorname{Char}}
\newcommand{\VV}{\operatorname{Char}}
\DeclareMathOperator{\car}{\operatorname{char}}
\DeclareMathOperator{\Tors}{\operatorname{Tors}}
\DeclareMathOperator{\Sh}{\operatorname{Shd}}
\newcommand\enet[1]{\renewcommand\theenumi{#1}
\renewcommand\labelenumi{\theenumi}}
\newcommand\rightmap[1]{\smash{\mathop{\rightarrow}\limits^{#1}}}
\begin{document}

\title[Quasi-projectivity, Artin-Tits Groups, and Pencil Maps]%
{Quasi-projectivity, Artin-Tits Groups, and Pencil Maps}

\author[E.~Artal]
{Enrique~Artal~Bartolo}
\address{
Departamento de Matematicas, IUMA
Universidad de Zaragoza
C/ Pedro Cerbuna, 12
50009 Zaragoza, Spain}
\email{artal@unizar.es}
\urladdr{http://riemann.unizar.es/geotop/WebGeoTo/Profes/eartal/}

\author[J.I.~Cogolludo]
{Jos\'e~Ignacio~Cogolludo-Agust{\'\i}n}
\address{Departamento de Matematicas, IUMA
Universidad de Zaragoza
C/ Pedro Cerbuna, 12
50009 Zaragoza, Spain}
\email{jicogo@unizar.es}
\urladdr{http://riemann.unizar.es/geotop/WebGeoTo/Profes/jicogo/}

\author[D.~Matei]{Daniel~Matei}
\address{
Institute of Mathematics of the Romanian Academy 
P.O. Box 1-764, RO-014700, Bucharest, Romania} 
\email{Daniel.Matei@imar.ro}
\urladdr{http://www.imar.ro/\~{}dmatei}
\thanks{The first two authors are partially supported by
MTM2007-67908-C02-01. The third author is partially supported
by SB2004-0181 and by grants 2CEx06-11-20/2006 and 
CNCSIS PNII-IDEI 1189/2008.}

%\subjclass[2000]{
%Primary 
%Secondary
%}

\keywords{Fundamental group, algebraic variety, quasi-projective group, 
pencil of hypersurfaces}

\date{\today}

\begin{abstract}
We consider the problem of deciding if a group is the fundamental group of a 
smooth connected complex quasi-projective (or projective) variety using Alexander-based 
invariants. In particular, we solve the problem for large families of Artin-Tits groups.
We also study finiteness properties of such groups and exhibit examples of hyperplane 
complements whose fundamental groups satisfy $\text{F}_{k-1}$ but not $\text{F}_k$ for any $k$. 
\end{abstract}

\maketitle

\begin{center}
\emph{To Anatoly Libgober}
\end{center} 

\section{Introduction}

The interest in characterizing or finding properties of (quasi-)projective groups,
that is, groups that can be obtained as fundamental groups of (quasi-)projective
varieties, has been known since J.-P.~Serre~\cite{Se} (who raised questions about 
general characterization of such groups) and O.~Zariski~\cite{Zariski-algebraic}
(who asked whether or not they are residually finite). The latter question was 
negatively answered by D.~Toledo~\cite{Toledo}. Only recently, after the work 
published by A.~Libgober~\cite{Li5} and D.~Arapura~\cite{Ar2}, the question about 
characterization has experienced an increasing interest.

Serre's original question is far from being solved, but at least new obstructions 
have been found to be effective to show that certain groups cannot be (quasi-)projective
in a series of papers by Dimca-Suciu-Papadima~\cite{DPS1, DPS2, DPS3, DPS4}.
Such obstructions are mostly based on Alexander invariants and finiteness conditions.

In this paper, we are mainly concerned about determining which Artin groups are 
quasi-projective and which ones are not by exhibiting quasi-projective varieties that
realize them in the first case and by using obstructions in the second case.
Finally, we are also interested in finiteness conditions by means of studying certain
subgroups of quasi-projective groups that appear as kernel of pencil based maps.
The reason for considering Artin groups has to do with two facts: first of all, it is a
large class of groups that can be easily described with a decorated 1-graph, but most 
importantly, the way these groups are built is by using \emph{algebraic} relations, i.e. 
relations appearing in local fundamental groups of algebraic singularities of type $\aA_k$.

The Alexander invariant of a (quasi-)projective variety $M$ is the first homology group of the 
universal abelian cover considered as a module over the group of deck transformations. As it turns 
out, an important invariant of such a module is related with the space of rank-one representations
of the (quasi-)projective fundamental group $G$ of $M$. In fact one has a stratification of the 
space of local systems of rank 1 whose strata are called \emph{characteristic varieties}
of $M$ (or $G$). Characteristic varieties of quasi-projective varieties were first considered
by D.~Arapura (cf.\cite{Ar1}) who gave a structure theorem on the biggest stratum, namely, 
he proved that it is a finite union of translated subtori (by a torsion element) and a finite
union of unitary elements. This was completed by A.~Libgober~\cite{Li9} (in the local case) and 
N.~Budur~\cite{Bu} (in the global case following the work of C.~Simpson~\cite{Si1} in the 
projective case) 
showing that the unitary elements should in fact be torsion elements. This structure theorem
and new properties presented in several papers (see~\cite{Di4,DPS0,DPS0,AC-prep} among others) 
impose restrictions on $G$. Our purpose here is to show how these restrictions are enough to 
prove the non quasi-projectivity of a large family of Artin-Tits groups. Also, an infinite
number of realizable triangular Artin-Tits groups are shown. 

As for the finiteness conditions, in a series of papers~\cite{Wa1, Wa2} from 1960's C.T.C.~Wall 
studied general finiteness properties of groups and ${\rm CW}$-complexes. A group $G$ is said 
to be of type $\text{F}_n$ if it has an Eilenberg-MacLane  complex $K(G,1)$ with finite $n$-skeleton.
Clearly $G$ is finitely generated if and only if it is $\text{F}_1$ and finitely presented if and 
only if it is $\text{F}_2$. An interesting example of a finitely presented group which is not finitely 
presented was given by J.~Stallings in~\cite{St}. A group $G$ is said to be of type $\text{FP}_n$ if 
the trivial $\Z G$-module $\Z$  admits a projective resolution which is finitely generated in 
$\text{dimensions}\leq n$. Note that $G$ is of type $\text{FP}_1$ if and only if it is finitely 
generated. In general, the property $\text{F}_n$ implies the ${\rm FP}_n$ property, and they are 
equivalent in the presence of $\text{F}_2$. But, as shown by Bestvina-Brady~\cite{BB} 
$\text{FP}_2$ does not imply finite presentation. The first example of a group which is $\text{F}_2$ 
but not $\text{FP}_3$ was given by J.~Stallings in~\cite{St}. Afterwards R.~Bieri~\cite{Bi1,Bi2} 
generalized Stallings' examples to the following family: Let $G_n = \F_2 \times \cdots \times \F_2$ be 
the direct product of $n$ free groups, each of rank $2$. Then the kernel of the map taking each 
generator to $1 \in \Z$ is $\text{F}_{n-1}$ but not $\text{F}_n$. Stallings' examples mentioned 
above are the cases $n=2$ and $n=3$. Analogous, more general kernels, now known as Bestvina-Brady groups,
coming from arbitrary right-angled Artin groups, were considered in~\cite{BB}. 

In our work, we propose an approach towards an answer to Serre's
question based on the cohomology associated to rank $1$ complex
representations of $G=\pi_1(M)$ and its relationship with the
geometric properties of pencils of hypersurfaces on $M$.
This point of view will lead us naturally into considering the
finiteness properties of $G$. In this respect, we will pay particular 
attention to the groups $G$ which can be realized as 
$\pi_1(\P^r\setminus \mathcal{A})$ for $\mathcal{A}$ 
a hypersurface arrangement in  $\P^r$. 

We find that the Bieri-Stallings groups appear as $\pi_1(\P^r\setminus \mathcal{A})$ for
some arrangements of hyperplanes $\A$. In this way, we exhibit a family 
of hyperplane arrangement groups that are $\text{F}_{n-1}$ but not 
$\text{F}_n$, for any $n\ge 3$. The entire class of quasi-projective 
Bestvina-Brady groups was characterized in~\cite{DPS2}. We will refine elsewhere
that result by showing that all quasi-projective Bestvina-Brady groups
are in fact hyperplane arrangement groups. Here we only give the argument
for the Bieri-Stallings groups.

The paper is organized as follows: in the first three sections we give an
expository presentation of the objects, techniques, and tools necessary.
In particular section~\S\ref{sec-qproj} will deal with definitions and 
properties of (quasi-)projective groups, sections~\S\ref{sec-charvar} 
and~\S\ref{sec-alex} with definitions and properties of two of the most important
invariants of finitely presented groups (in this context) such as the 
Characteristic Varieties and Alexander Invariants respectively. 
In section~\S\ref{sec-artin} we will study the question of what Artin
groups are (quasi-)projective. Finally, in section~\S\ref{sec-pencilmaps} we
will study some finiteness properties of quasi-projective groups via the 
pencil map construction.

\medskip

\begin{ack}
The third author would like to thank Michael Falk for an inspiring discussion.
A substantial amount of this work was done while the third author 
was visiting Universidad de Zaragoza.
He is grateful for the financial support received from his
institution, as well as the host institution, 
that made that visit possible.
\end{ack}

\section{(Quasi)-projective groups}
\label{sec-qproj}

In this section we will describe the class of finitely presented groups
we are interested in.
It is known that any finitely presented group $G$ can be 
realized as $\pi_1(M)$ of a smooth connected complex manifold $M$. If $M$ 
is allowed to be compact, then its complex dimension cannot be in general 
less than~$4$. In terms of $\pi_1(M)$, if one allows $M$ to be open, then 
$M$ might be chosen just to be~$2$-dimensional. If in addition, $M$ is 
required to be Stein, then again its dimension cannot be less than~$4$.

However, if $M$ is required to be a smooth complex algebraic variety, then 
not any finitely presented $G$ can be realized as $\pi_1(M)$. In the compact
case there is a well-known restriction, namely $G$ must be $1$-formal 
(cf.~\cite{DGMS}). Rational homotopy theory imposes restrictions on $\pi_1(M)$ 
even in the non-compact case, where $M$ is quasi-projective, as shown 
in~\cite{Mo}, but $1$-formality is no longer guaranteed.

\begin{definition}
We call a group $G$ \emph{projective} (resp. \emph{quasi-projective})
if $G=\pi_1(M)$ for $M$ a smooth connected complex projective
(resp. quasi-projective) variety. 
\end{definition}

We denote by $\QP$ the set of all quasi-projective groups, and by $\PP$
the set of all projective groups. Obviously we have an inclusion,
$\PP\subset\QP$, which is in fact strict, since groups $G$ with odd first
Betti number $b_1(G)$ cannot be projective, whereas for instance, free groups 
of odd rank are clearly quasi-projective. 

\begin{rem}
Standard Lefschetz-Zariski-Van Kampen principle guarantees that all groups in 
$\QP$ are finitely presented. Moreover, any (quasi-)projective group is the 
fundamental group of a (quasi-)projective surface.
\end{rem}

Let $\FF_1$ stand for the set of all finitely presented $1$-formal groups. 
We pointed out above that there is an inclusion $\PP\subset\FF_1$. This inclusion 
is also strict by an easy odd first Betti number argument. However, in general, 
quasi-projectivity and $1$-formality are independent properties (cf.~\cite{DPS0, DPS1}). 
An important class of groups which are both $1$-formal and quasi-projective is that of 
fundamental groups of complements to hypersurfaces in a projective space. We denote by 
$\mathcal{H}$ the set of such groups, that is, $G=\pi_1(M)$ where $M=\P^N\setminus V$ 
for a hypersurface $V$ in $\P^N$; we call these groups \emph{hypersurface} or \emph{curve} 
groups (again because of Lefschetz-type arguments). Then $\mathcal{H}\subset\FF_1\cap\QP$, 
as shown in~\cite{Ko2} using techniques from~\cite{Mo}. 

\begin{properties}
The following properties have been proved in the literature:

\begin{enumerate}
\enet{($\PP$\arabic{enumi})}
\item  If $G_1, G_2$ are in $\QP$, then the direct product $G_1\times G_2$ is also in $\QP$.
\item  If $G_1, G_2$ are in $\FF_1$, then the direct product $G_1\times G_2$ is also in $\FF_1$.
\item  The free product $G_1*G_2$ is not necessarily in $\QP$ (cf.~\cite{DPS0, DPS1}). 
For example, $\pi_1*\pi_2$ is not quasi-projective if $\pi_1, \pi_2$ are fundamental groups 
of Riemann surfaces of non-zero genus.
\item By M.~Gromov~\cite{Gr}, a non-trivial free product $G_1*G_2$ is never projective.
\item If a group $G$ has more than one end, then $G\notin\PP$. Also if $G$ surjects with finitely 
generated kernel onto a group with infinitely many ends, then again $G\notin\PP$ (cf.~\cite{ABR}).
\item If $G_1, G_2$ are in $\FF_1$, then so is $G_1*G_2$, see~\cite{DPS0, DPS1}.
\item The classes $\PP$ and $\QP$ are closed under taking finite index subgroups. 
The same is not true in general for the class $\FF_1$, see~\cite[Example~2.9]{DP}. 
\end{enumerate}
\end{properties}

\begin{example} 
The fundamental group 
$\pi_g=\langle a_i, b_i, 1\le i\le g\mid \prod_{i=1}^{g} a_ib_ia_i^{-1}b_i^{-1}\rangle$ 
of a smooth compact projective curve $C_g$ of genus $g$ is the most basic projective group. 
Maybe the simplest infinite quasi-projective groups are the free groups, both abelian $\Z^n$,
and non-abelian $\F_n$. Of these, only $\Z^{2n}$ are projective. Note that 
$\F_{2g+p-1}=\pi_1(C_{g,p})$, where $C_{g,p}, p\ge 1$ is $C_g$ punctured $p$ times. 
\end{example}

We are going to extend these examples to \emph{orbifold groups}.

\begin{definition}
An \emph{orbifold} $X_\varphi$ is a quasi-projective Riemann surface~$X$
with a function $\varphi:X\to\N$ with value~$1$ outside a finite
number of points.
\end{definition}

\begin{definition}
\label{def-orb-morph}
For an orbifold $X_\varphi$, let $p_1,\dots,p_n$ the points such that
$\varphi(p_j):=m_j>1$. Then, the \emph{orbifold fundamental group} of $X_\varphi$ is
$$
\pi_1^{\rm\text{orb}}:=\pi_1(X\setminus\{p_1,\dots,p_n\})/\langle\mu_j^{m_j}=1\rangle
$$
where $\mu_j$ is a meridian of $p_j$ in $X$. We oftentimes denote $X_\varphi$ simply by $X_{m_1,\dots,m_n}$.
\end{definition}

\begin{definition}
A dominant algebraic morphism $\rho:Y\to X$ defines an \emph{orbifold morphism} $Y\to X_\varphi$
if for all $p\in X$, the divisor $\rho^*(p)$ is a $\varphi(p)$-multiple.
\end{definition}

\begin{rem}\label{rem-exseq}
An orbifold morphism $\rho:S\to C_\varphi$ with connected generic fiber~$F$ defines an exact sequence
\[
\pi_1(F)\to\pi_1(S)\to\pi_1^{\rm\text{orb}}(C_\varphi)\to 1. 
\]
\end{rem}

\begin{example} 
Any orbifold fundamental group $G:=\pi_1^{\text{orb}}(C_\varphi)$
can be realized as $\pi_1(S)$ for $S$ a ruled affine surface, as shown 
by J.~Bertin~\cite{Ber}. If the underlying curve is not projective, it is easily seen that
$G$ is not projective, since it admits a finite-index free subgroup.

On the other hand, if $C$ is projective, one can show that $G$ is projective, using arguments 
used by Morgan-Friedman in~\cite{MF}. They show that, for an elliptic fibration $\pi:S\to C$
with at least one fiber having singular reduction, the exact sequence in Remark~\ref{rem-exseq}
induces an isomorphism of $\pi_1(S)$ onto $G$. Such an elliptic fibration can be found using 
logarithmic transformations. Moreover, it is proved in~\cite{MF} that such a surface can be 
deformed to become projective if and only if $b_1(S)$ is even, and thus the result follows.
\end{example}

In light of the discussion above, it seems legitimate to ask the following:

\begin{question}
Which finitely presented groups $G$ are quasi-projective? 
\end{question}

This question, originally posed by J.-P.~Serre in~\cite{Se}, was taken up by A.~Libgober 
in~\cite{Li5} using Alexander invariants. 
This is the point of view that will be considered here as well.

\section{Characteristic varieties}
\label{sec-charvar}
Throughout this paper $G$ will representa a finitely presented group. 
Characteristic varieties are invariants of $G$ which can be computed using any connected 
topological space $X$ (having the homotopy type of a finite CW-complex) such that $G=\pi_1(X,x_0)$,
$x_0\in X$. Let us consider a character $\xi:G\to\C^*$; recall that the space of characters is
\begin{equation}
\label{eq-torus}
\T_G=\Hom(G,\C^*)=\Hom(H_1(X;\Z),\C^*)=H^1(X;\C^*).
\end{equation}
Given such a character $\xi$, one can construct a local system of coefficients over $X$ as follows.
Let $\rho:\tilde{X}\to X$ be the universal abelian covering of $X$. The group $H_1(X;\Z)$ acts freely 
(on the right) on $\tilde{X}$. The local system of coefficients $\C_\xi$ is defined as 
\[
\pi_\xi:\tilde{X}\times_{H_1(X;\Z)}\C\to X
\text{ where }
\tilde{X}\times_{H_1(X;\Z)}\C:=
\left(
\tilde{X}\times\C
\right)\Big/
(x,t)\sim(x^h,\xi(h^{-1}) t).
\]

\begin{definition}
\label{def-char-var}
The $k$-th~\emph{characteristic variety}~of $G$ is the subvariety 
of $\T_G$, defined by:
\[
\VV_{k}(G)=\{ \xi \in \T_G\: |\:\dim H^1(G,\C_{\xi}) \ge k \}, 
\]
where $H^1(G,\C_{\xi})$ is the twisted cohomology with coefficients 
in the local system $\xi$. 
\end{definition}

\begin{rem}\label{rem-cell}
There is a simple way to describe this cohomology. Let us suppose that $X$ is a CW-complex. Then, $\tilde{X}$ inherits also 
a CW-complex structure. For each cell $\sigma$ in $X$ we fix a cell $\tilde{\sigma}$ in $\tilde{X}$ such that
$\rho(\tilde{\sigma})=\sigma$. Then, the set of cells of $\tilde{X}$ is 
\[
\{\tilde{\sigma}^h\mid\sigma\text{ cell of }X, h\in H_1(X;\Z)\}.
 \]
In particular, the chain complex $C_*(\tilde{X};\C)$ is a free $\Lambda$-module with basis
$\{\tilde{\sigma}\}$, where $\Lambda:=\C[H_1(X;\Z)]$ is the group algebra of $H_1(X;\Z)$.
Evaluating the elements in $H_1(X;\Z)$ by $\xi$ we obtain a chain complex $C_*(X;\C)^\xi$, which as
a vector space, is isomorphic to $C_*(X;\C)$ but whose differential is twisted.
\end{rem}

\begin{rem}
Let us assume that $G$ is finitely generated; then so is $H_1(X; \Z)$. Let $n:=\rank H_1(X; \Z)$ and 
let $\Tors_G$ be the torsion subgroup of $H_1(X; \Z)$. Then $\T_G$ is an abelian complex Lie group 
with $|\Tors_G|$ connected components (each one isomorphic to $(\C^*)^n$) satisfying the following exact 
sequence:
\[
1\to \T_G^\one\to\T_G\to \Tors_G\to 1,
\]
where $\T_G^\one$ is the connected component containing the trivial character $\one$.
\end{rem}

\begin{example}\label{ex-noqp}
Let $G:=\langle x,t\mid x t^2 x=t^2 x^2 t\rangle$. In this case $\T_G=\C^*$ and
$\VV_1(G)$ is defined by $z^2-2 z+2=0$.
\end{example}

These invariants are very close to other ones, like the Green-Lazarsfeld's invariant. 
They were studied by A.~Beauville~\cite{Be} for \emph{projective surfaces} and his approach was extended 
by D.~Arapura~\cite{Ar1} to quasi-projective varieties. There are also important contributions from 
C.~Simpson~\cite{Si1}, N.~Budur~\cite{Bu}, T.~Delzant~\cite{Delzant}, and A.~Dimca~\cite{Di4} for the 
structure of characteristic varieties (for compact K\"ahler or quasi-projective manifolds). In the 
hypersurface case, A.~Libgober~\cite{Li3,Li4} proposed a computation method where the knowledge 
of the group is not required and showed that they reflect deep algebraic properties of the manifolds. 

The properties of characteristic varieties of quasi-projective groups provide strong obstructions 
for a group to be in~$\QQ\PP$. Following Remark~\ref{rem-cell}, it is not difficult to prove that the
characteristic varieties of a quasi-projective group $G$ are algebraic subvarieties of $\T_G$. 
The following result can be found in one form or another in the literature:

\begin{thm}[Arapura~\cite{Ar1}, Budur~\cite{Bu}]
\label{thm-donu-nero}
If $G \in \QP$, then all irreducible components of $\VV_k(G)$ are subtori of the character torus $\T_G$
(possibly translated by a torsion character).
\end{thm}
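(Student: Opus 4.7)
The plan is to combine two classical ingredients. Arapura's original structure theorem would give that the positive-dimensional irreducible components of $\VV_k(G)$ are translates of subtori of $\T_G$, and Simpson's theorem in the compact K\"ahler case together with its extension by Budur to the quasi-projective setting would upgrade these translations to torsion characters. Together these yield the statement; isolated components are trivially zero-dimensional subtori once the torsion statement is in place.

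For the first part, I would fix a smooth quasi-projective $M$ with $\pi_1(M)=G$ and choose a smooth projective compactification $\bar M$ such that $D:=\bar M\setminus M$ is a simple normal crossings divisor. For $\xi\in\T_G^\one$ close to the identity, the twisted cohomology $H^\ast(M,\C_\xi)$ computing $\VV_k(G)$ via Remark~\ref{rem-cell} may be rewritten as the hypercohomology of the logarithmic de Rham complex $(\Omega_{\bar M}^\bullet(\log D),\nabla_\xi)$, where $\nabla_\xi$ is the integrable meromorphic connection whose residues along the components of $D$ encode $\xi$ through Deligne's canonical extension. The main step of Arapura's argument is then to show that if $W\subset\VV_k(G)$ is a positive-dimensional irreducible component, there exists a dominant orbifold morphism $f:M\to C_\varphi$ to an orbicurve, with connected generic fiber, in the sense of Definition~\ref{def-orb-morph}, such that $W\subset f^\ast\bigl(\T_{\pi_1^{\rm\text{orb}}(C_\varphi)}\bigr)\subset\T_G$. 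Since $f^\ast$ is an injective homomorphism of complex algebraic tori, its image is a subtorus of $\T_G$, so by dimension and irreducibility $W$ is a translate of a subtorus.

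For the torsion statement, I would invoke Simpson's theorem: in the projective/K\"ahler case, unitary characters lying in $\VV_k(G)$ must be torsion, a fact proved via the non-abelian Hodge correspondence and the $\C^\ast$-action on the moduli space of semistable Higgs bundles whose fixed points are complex variations of Hodge structure. Budur's quasi-projective extension, which uses mixed Hodge modules on the character variety and compatibility of the weight filtration with the Hodge $\C^\ast$-action, transplants the conclusion to our setting. Combined with the first part, this forces the translating character of each irreducible component $W$ to be torsion, completing the proof.

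The hard part is the production of the orbifold pencil $f:M\to C_\varphi$ from a positive-dimensional component. The strategy is to extract, as $\xi$ varies over $W$, a family of nonzero twisted logarithmic $1$-forms $\omega_\xi\in H^0\bigl(\bar M,\Omega_{\bar M}^1(\log D)\otimes L_\xi\bigr)$ whose pairwise wedge products vanish identically, and then apply a logarithmic, twisted version of the Castelnuovo--de Franchis theorem to produce the fibration; Stein factorization and the residue data of $\nabla_\xi$ along $D$ then fix the orbifold structure on the target $C_\varphi$. Verifying the decomposability of the $\omega_\xi$ and the compatibility of the multiplicities at the punctures with the pullback $f^\ast$ is the technical heart of the argument, and is what makes the whole proof work.
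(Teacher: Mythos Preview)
The paper does not supply its own proof of this theorem: it is stated as a result from the literature, with the attribution split in the surrounding text---Arapura is credited with the subtorus structure of positive-dimensional components (and the unitarity of isolated points), while Budur (following Simpson in the projective case) is credited with upgrading the isolated unitary points to torsion. There is therefore nothing in the paper to compare your argument against line by line.

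That said, your sketch is a faithful summary of how the cited proofs actually proceed, so in that sense it matches the intended content. A couple of cautions are still worth recording. First, Arapura's theorem as stated in the paper (Theorem~\ref{thm-arapura}) is formulated for $\VV_1(G)$, not for general $\VV_k(G)$; the paper's own refinement to all $k\ge 1$ (Theorem~\ref{thm-orb}) is deferred to a forthcoming work. Your sketch silently assumes the pencil/Castelnuovo--de~Franchis mechanism works uniformly in $k$, which is not what Arapura's original paper gives you; you would need either Budur's mixed-Hodge-module argument (which does treat all degrees at once) or the authors' extension to justify the general-$k$ case. Second, the torsion assertion for the translating character of a \emph{positive-dimensional} component is already in Arapura (since the torsion element $\sigma$ in Theorem~\ref{thm-arapura}\ref{thm-arapura-1} is part of his statement); what Budur adds is precisely the torsion of the \emph{isolated} points, so your division of labor between the two references is slightly off.
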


Note that, in particular, the group of Example~\ref{ex-noqp} cannot be quasi-projective. 

The main part of Theorem~\ref{thm-donu-nero} was proved by D.~Arapura, the only part missing in his 
result is the fact that isolated points are not only unitary, but in fact torsion characters. 
The main result of D.~Arapura involves more obstructions.

\begin{thm}[Arapura~\cite{Ar1}]
\label{thm-arapura}
Let $\Sigma$ be an irreducible component of $\VV_1(G)$. Then,
\begin{enumerate}
\enet{\rm(\arabic{enumi})}
\item\label{thm-arapura-1}
If $\dim\Sigma>0$ then there exists a surjective morphism $\rho:X\to C$,
$C$ algebraic curve, and a torsion element $\sigma$ such that
$\Sigma=\sigma\rho^*(H^1(C;\C^*))$.
\item If $\dim\Sigma=0$ then $\Sigma$ is unitary.
\end{enumerate}
\end{thm}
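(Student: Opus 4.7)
The plan is to realize $G = \pi_1(X)$ for a smooth quasi-projective $X$, pick a good compactification $\bar X \supset X$ with a simple normal crossings boundary $D$, and extract the pencil $\rho$ from a tangent direction to $\Sigma$ via a Castelnuovo-de Franchis argument for logarithmic forms.

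First I would interpret $\VV_1(G)$ via the twisted logarithmic de Rham complex. On the character torus $\T_G = H^1(X; \C^*)$, in a neighborhood of a unitary point $\sigma$, the cohomology $H^1(X, \C_\xi)$ is computed by $(\Omega^\bullet_{\bar X}(\log D), d + \omega_\xi)$, where $\omega_\xi$ is a closed logarithmic $1$-form whose exponential matches $\xi$. Choosing $\sigma$ a smooth point of the positive-dimensional component $\Sigma$, the tangent directions to $\Sigma$ at $\sigma$ consist of classes $\alpha \in H^1(X; \C)$ along which the cohomology of this twisted complex jumps.

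For the positive-dimensional case, I would use the log Hodge decomposition at the unitary $\sigma$ to represent two $\C$-linearly independent tangent classes by closed logarithmic holomorphic $1$-forms $\omega_1, \omega_2 \in H^0(\bar X, \Omega^1_{\bar X}(\log D))$. The rank-one jumping condition in degree one forces $\omega_1 \wedge \omega_2 = 0$, so a Castelnuovo-de Franchis type lemma for logarithmic forms, used by Beauville in the compact setting and adapted by Arapura to the open case, produces a surjective algebraic morphism $\bar\rho: \bar X \to \bar C$ onto a smooth projective curve of positive genus such that $\omega_1, \omega_2$ are pullbacks from $\bar C$. Restricting to $X$ and taking Stein factorization yields a surjection $\rho: X \to C$ with connected generic fiber. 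The Leray spectral sequence for $\rho$ places the entire translated subtorus $\sigma \cdot \rho^*(H^1(C; \C^*))$ inside $\VV_1(G)$, and the maximality of $\Sigma$ as an irreducible component forces equality.

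That $\sigma$ can be taken unitary, and that isolated components are unitary (giving statement (2)), follows from the $\C^*$-action on the moduli space of rank-one semisimple local systems: $\VV_1(G)$ is equivariant under this action, and any component not pointwise fixed by the compact $S^1 \subset \C^*$ would fail to carry a polarizable complex variation of Hodge structure, contradicting Simpson's theorem that jumping loci do. The main obstacle is the Castelnuovo-de Franchis step in the logarithmic setting: one must verify that tangent vectors to $\Sigma$ are actually representable by global closed logarithmic $1$-forms and that $\omega_1 \wedge \omega_2 = 0$ holds on $\bar X$, which requires the mixed Hodge analysis of the twisted log de Rham complex at unitary characters (via Deligne's and Timmerscheidt's results on $E_1$-degeneration with unitary coefficients).
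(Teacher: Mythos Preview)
The paper does not prove this theorem; it is stated as a result of Arapura and quoted without proof, as background for the structure theory of characteristic varieties developed afterward. There is therefore no proof in the paper to compare against.

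Your sketch is in the spirit of Arapura's original argument (logarithmic Castelnuovo--de~Franchis combined with Hodge-theoretic control at unitary characters), but one step does not quite work as written. When $\dim\Sigma=1$ there is only a \emph{single} tangent direction to $\Sigma$ at $\sigma$, so you cannot extract two $\C$-linearly independent classes $\omega_1,\omega_2$ from the tangent space of $\Sigma$ alone. In Arapura's argument the second closed logarithmic $1$-form does not come from another tangent direction to $\Sigma$; it comes from a nonzero class in $H^1(X,\C_\sigma)$ itself (which exists precisely because $\sigma\in\VV_1(G)$), realized via the twisted logarithmic de~Rham complex. The vanishing $\omega_1\wedge\omega_2=0$ then encodes the fact that this twisted class survives first-order deformation in the direction $\omega_1$, not that two tangent vectors to $\Sigma$ are isotropic. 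Once this is fixed, the remainder of your plan---Stein factorization, identification of the component as a translated pullback torus, and the unitarity of isolated points---follows the expected route. Note also that the \emph{torsion} assertion in part~(1) requires a further step beyond unitarity: Arapura obtains it by showing that the translating character descends to the curve $C$ and is therefore torsion there; your $\C^*$-action argument by itself only yields unitarity.
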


We give here a refined statement which will be proved in a forthcoming paper~\cite{ACM-prep}.

\begin{thm}\label{thm-orb}
Let $G\in \QP$ and $\Sigma$ be an irreducible component of $\VV_k(G)$, $k\geq 1$. Then one of 
the two following statements holds:
\begin{itemize}
\item There exists a surjective orbifold morphism $\rho:X\to C_\varphi$
and an irreducible component $\Sigma_1$ of $\VV_k(\pi_1^{\rm\text{orb}}(C_\varphi))$
such that $\Sigma=\rho^*(\Sigma_1)$.
\item $\Sigma$ is an isolated torsion point.
\end{itemize}
\end{thm}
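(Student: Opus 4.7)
The plan is to start from the Arapura--Budur results (Theorems~\ref{thm-donu-nero} and~\ref{thm-arapura}) and refine the positive-dimensional case so that the translation character is absorbed into a pullback from an orbifold enhancement of the base curve. If $\dim\Sigma=0$, Budur's sharpening of Arapura's theorem already identifies $\Sigma$ with a torsion point, giving the second alternative. Thus the work concentrates on the case $\dim\Sigma>0$, where Arapura produces a surjective morphism $\rho:X\to C$ from some smooth model $X$ with $\pi_1(X)=G$ onto a smooth algebraic curve $C$, together with a torsion character $\sigma\in\T_G$ such that $\Sigma=\sigma\,\rho^*(H^1(C;\C^*))$. The task becomes: find $\varphi$ on $C$ so that $\rho$ upgrades to an orbifold morphism $X\to C_\varphi$ and $\sigma$ extends as the pullback of a character of $\pi_1^{\text{orb}}(C_\varphi)$.

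First I would define $\varphi$ directly from the geometry of $\rho$: after reducing to the case of connected generic fiber via Stein factorization, for each $p\in C$ write the divisor $\rho^*(p)=\sum_i n_i(p)D_i$ on $X$ and set $\varphi(p):=\gcd_i n_i(p)$, which equals $1$ away from a finite subset of $C$. By construction $\rho:X\to C_\varphi$ is an orbifold morphism in the sense of Definition~\ref{def-orb-morph}, and Remark~\ref{rem-exseq} supplies the exact sequence
\[
\pi_1(F)\to\pi_1(X)\to\pi_1^{\text{orb}}(C_\varphi)\to 1.
\]
Dualizing yields an embedding $\rho^{*}:\T_{\pi_1^{\text{orb}}(C_\varphi)}\hookrightarrow \T_G$ whose image consists precisely of the characters of $G$ that annihilate the normal closure of $\im(\pi_1(F)\to\pi_1(X))$ together with every power $\mu_p^{\varphi(p)}$ for $p$ in the support of $\varphi$.

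The delicate step is showing that $\Sigma$ lies inside this image. Writing an arbitrary $\xi\in\Sigma$ as $\xi=\sigma\,\rho^*(\eta)$, one must verify that $\sigma$ itself equals $\rho^*(\sigma_1)$ for some $\sigma_1\in \T_{\pi_1^{\text{orb}}(C_\varphi)}$. Because $\rho^*(H^1(C;\C^*))$ is pointwise trivial on the vertical subgroup generated by $\im(\pi_1(F))$, the fact that $\Sigma$ is a coset forces $\sigma$ to be trivial on a subgroup of finite index of $\im(\pi_1(F))$; combined with torsion, $\sigma$ factors through $\pi_1(X)/\pi_1(F)^{\pi_1(X)}$. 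The order of $\sigma(\mu_p)$ is then controlled by the local structure of $\rho^*(p)$, and my choice $\varphi(p)=\gcd_i n_i(p)$ is designed exactly so that $\sigma(\mu_p)^{\varphi(p)}=1$ holds for every $p$; hence $\sigma$ lifts. Setting $\Sigma_1:=(\rho^{*})^{-1}(\Sigma)$ produces a positive-dimensional translated subtorus of $\T_{\pi_1^{\text{orb}}(C_\varphi)}$ with $\Sigma=\rho^{*}(\Sigma_1)$.

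Finally, to match the jumping index $k$ rather than merely $k=1$, I would invoke a twisted Leray-type argument for $\rho$ restricted to characters of the form $\xi=\rho^*(\xi_1)$: vanishing of the generic-fiber cohomology in the relevant degree yields an isomorphism $H^1(G;\C_{\xi})\cong H^1(\pi_1^{\text{orb}}(C_\varphi);\C_{\xi_1})$, so $\Sigma_1$ is automatically an irreducible component of $\VV_k(\pi_1^{\text{orb}}(C_\varphi))$. The main obstacle I anticipate is the middle step: justifying that the \emph{specific} assignment $\varphi(p)=\gcd_i n_i(p)$ is both necessary and sufficient for $\sigma$ to lift, rather than merely producing \emph{some} orbifold structure. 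This requires careful bookkeeping of the meridian relations under Stein factorization and under the multiple components appearing in $\rho^*(p)$, and it is the heart of the refinement beyond Theorem~\ref{thm-arapura}.
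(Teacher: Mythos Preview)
The paper does not contain a proof of this theorem: immediately before the statement it says ``We give here a refined statement which will be proved in a forthcoming paper~\cite{ACM-prep}.'' There is therefore nothing in the present paper to compare your proposal against.

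That said, your outline follows the natural strategy one would expect for~\cite{ACM-prep}: start from Arapura's description $\Sigma=\sigma\,\rho^*(H^1(C;\C^*))$ with $\sigma$ torsion, define the orbifold weights from the fiber multiplicities, and argue that $\sigma$ descends to the orbifold fundamental group. One point deserves tightening. You write that ``the fact that $\Sigma$ is a coset forces $\sigma$ to be trivial on a subgroup of finite index of $\im(\pi_1(F))$''; being a coset of $\rho^*(H^1(C;\C^*))$ by itself does not force this, since every coset has the same shape regardless of whether it lies in $\VV_k$. What actually constrains $\sigma|_F$ is the cohomological condition $\dim H^1(X;\C_\xi)\ge k$ for $\xi\in\Sigma$, analyzed through the Leray spectral sequence of $\rho$: nontriviality of $\sigma$ on a generic fiber would kill $H^0$ of the fiber and collapse the relevant $E_2$ terms. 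Similarly, the assertion that $\sigma(\mu_p)^{\varphi(p)}=1$ follows from your choice of $\varphi(p)$ needs the observation that a meridian of a component of $\rho^*(p)$ of multiplicity $n_i(p)$ maps to $\mu_p^{n_i(p)}$ in $\pi_1(C\setminus\{p\})$, together with the triviality of $\sigma$ on the generic fiber class; you gesture at this but do not make the relation explicit. Your final Leray step, identifying $H^1(G;\C_\xi)$ with $H^1(\pi_1^{\text{orb}}(C_\varphi);\C_{\xi_1})$, is the right mechanism for transferring the depth $k$, though in practice one must also check that no component of $\VV_k(\pi_1^{\text{orb}}(C_\varphi))$ strictly containing $\Sigma_1$ pulls back into $\VV_k(G)$, to ensure $\Sigma_1$ is genuinely an irreducible component and not merely a subvariety.
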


\begin{rem}
In many cases, isolated torsion points come from orbifold morphisms. It will be proved
in a forthcoming paper~\cite{AC-prep} that this is not always the case.
\end{rem}

\begin{definition}
Let $\Sigma$ be an irreducible component of $\VV_k(G)$ and $\dim_\C \Sigma\geq 1$, consider
$\Sh \Sigma$ (not necessarily in $\VV_k(G)$) parallel to $\Sigma$ (that is, $\Sigma=\rho \Sh \Sigma$
for some $\rho\in \T_G$) and such that $\one \in \Sh \Sigma$. Such a subtorus $\Sh \Sigma$ will be 
referred to as the \emph{shadow} of~$\Sigma$.
\end{definition}

The result and the proof of Theorem~\ref{thm-orb} provide many obstructions which will be used in \S\ref{sec-artin}. 

\begin{prop}\label{prop-obs1}
Let $G\in \QP$ and $\Sigma_1,\Sigma_2$ different irreducible components of  $\VV_k(G)$, 
$k\geq 1$ of positive dimension. Then
\begin{enumerate}
\enet{\rm(\arabic{enumi})}
\item\label{prop-obs1-1} If the intersection $\Sigma_1\cap\Sigma_2$ is non-empty, it consists of isolated 
torsion points, which are of torsion type.
\item\label{prop-obs1-2} 
Their shadows are either equal or have $\one$ as an isolated intersection point.
\item\label{prop-obs1-3} 
If $\Sigma_1$ is not a component of $\VV_{k+1}(G)$ and $p\in\Sigma_1\cap\VV_{k+1}(G)$ then $p$~is a torsion point.
\item\label{prop-obs1-4} $\Sigma_1$ is an irreducible component of $\VV_\ell(G)$, $1\leq\ell\leq k$. 
\end{enumerate}
\end{prop}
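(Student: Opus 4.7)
The plan is to establish the four parts in the order $(4), (1), (2), (3)$, since $(1)$ and $(4)$ both feed into $(3)$. For $(4)$, start from $\VV_\ell(G)\supseteq\VV_k(G)\supseteq\Sigma_1$ for every $\ell\leq k$, so $\Sigma_1$ lies in some irreducible component $\Sigma'$ of $\VV_\ell(G)$. Applying Theorem~\ref{thm-orb} to $\Sigma_1$, write $\Sigma_1=\rho^*(\Sigma_{1,1})$ for an orbifold morphism $\rho:X\to C_\varphi$ (with $\pi_1(X)=G$) and a positive-dimensional component $\Sigma_{1,1}$ of $\VV_k(\pi_1^{\text{orb}}(C_\varphi))$. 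A direct inspection of characteristic varieties of orbifold curve groups, via the presentation of $\pi_1^{\text{orb}}(C_\varphi)$ and Remark~\ref{rem-cell}, shows that a positive-dimensional component of $\VV_k$ of such a group is automatically a component of $\VV_\ell$ for every $\ell\leq k$. Pulling back by $\rho^*$ (injective, as $\rho_*$ is surjective on $\pi_1$) transfers this property: $\Sigma_1$ is a component of $\VV_\ell(G)$.

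For $(1)$, by Theorem~\ref{thm-orb} we may write $\Sigma_i=\sigma_i T_i$, where $T_i=\rho_i^*(H^1(C_{\varphi_i};\C^*)^\one)$ is a subtorus of $\T_G$ and $\sigma_i$ is a torsion character coming from the orbifold data of $\rho_i:X\to C_{\varphi_i}$. If $p\in\Sigma_1\cap\Sigma_2$, an elementary coset computation yields $\Sigma_1\cap\Sigma_2=p\cdot(T_1\cap T_2)$. The crucial input is that $T_1\cap T_2$ is finite and consists of torsion elements of $\T_G$: two essentially distinct orbifold morphisms $\rho_1,\rho_2$ from $X$ induce injections $\rho_i^*:H^1(C_{\varphi_i};\C)\hookrightarrow H^1(X;\C)$ whose images meet only at zero, for otherwise a common pulled-back cohomology class would produce, by a Castelnuovo-de Franchis type argument, a refinement of both pencils to a common orbifold morphism, contradicting $\Sigma_1\neq\Sigma_2$. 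Finiteness of $T_1\cap T_2$ forces $p=\sigma_1 t_1=\sigma_2 t_2$ to be a product of torsion elements, hence torsion; every point of $p\cdot(T_1\cap T_2)$ is similarly torsion.

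For $(2)$, by definition $\Sh\Sigma_i=T_i$. If $T_1=T_2$ the shadows coincide; otherwise, the same finiteness of $T_1\cap T_2$ established in $(1)$ shows that $\one\in T_1\cap T_2$ is an isolated point of the intersection. For $(3)$, pick $p\in\Sigma_1\cap\VV_{k+1}(G)$ and let $\Sigma'$ be an irreducible component of $\VV_{k+1}(G)$ through $p$. By Theorem~\ref{thm-donu-nero}, either $\dim\Sigma'=0$, in which case $\Sigma'=\{p\}$ is a torsion point and we are done, or $\Sigma'$ is a positive-dimensional translated subtorus. In the latter case $\Sigma'\subseteq\VV_k(G)$, and by $(4)$ $\Sigma'$ is an irreducible component of $\VV_k(G)$. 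Since $\Sigma_1$ is not a component of $\VV_{k+1}(G)$ while $\Sigma'$ is, we have $\Sigma'\neq\Sigma_1$; applying $(1)$ to these two distinct components of $\VV_k(G)$ shows that $p\in\Sigma_1\cap\Sigma'$ is torsion.

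The main obstacle is the transversality statement at the heart of $(1)$: that two distinct orbifold pencils produce subtori $T_1, T_2$ of $\T_G$ with finite intersection. The Arapura-Budur structure Theorem~\ref{thm-donu-nero} alone does not suffice; one must exploit the full geometric content of Theorem~\ref{thm-orb}, namely that an orbifold morphism refining two overlapping pencils actually exists, in order to derive a contradiction to $\Sigma_1\neq\Sigma_2$.
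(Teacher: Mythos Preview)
The paper does not actually prove this proposition. Immediately after its statement, the paper inserts a remark: ``Parts~(1) and~(2) in Proposition~\ref{prop-obs1} can be found in~\cite{DPS4}, Part~(3) is proved in~\cite{Di3}, and Part~(4) is immediate.'' There is therefore no in-paper argument to compare your proposal against; the proposition is presented as a compilation of results from the literature.

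That said, your sketch is along the lines of what one finds in those references. The reduction of~(3) to~(1), (4), and the structure theorem (Theorem~\ref{thm-donu-nero}/\ref{thm-orb}) is exactly the right mechanism. For~(1) and~(2), the transversality of the underlying subtori $T_1, T_2$ associated with distinct orbifold pencils is indeed the heart of the matter, and your invocation of a Castelnuovo--de~Franchis type argument (two pencils sharing a pulled-back class must factor through a common map) is the standard way this is handled in~\cite{Ar1,DPS4}; you are right to flag it as the point requiring genuine geometric input beyond Theorem~\ref{thm-donu-nero}. For~(4), your argument via Theorem~\ref{thm-orb} and the explicit structure of characteristic varieties of orbifold curve groups is presumably what the authors have in mind by ``immediate'': once $\Sigma_1=\rho^*(\Sigma_{1,1})$, the fact that depth is generically constant on positive-dimensional components for such groups forces $\Sigma_1$ to remain a component of $\VV_\ell(G)$ for all $\ell\le k$.
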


\begin{rem}
Parts~\ref{prop-obs1-1} and~\ref{prop-obs1-2} in Proposition~\ref{prop-obs1} can be found in~\cite{DPS4},
Part~\ref{prop-obs1-3} is proved in~\cite{Di3}, and Part~\ref{prop-obs1-4} is immediate.
\end{rem}

\begin{prop}\label{prop-obs2}
Let $G\in \QP$ and $\Sigma$ be an irreducible component of $\VV_k(G)$, $k\geq 1$ of positive dimension~$d$.
Then:
\begin{enumerate}
\enet{\rm(\arabic{enumi})}
\item \label{prop-obs2-1} 
If $\one\in\Sigma$, then $k\leq d-1$. Moreover, one can ensure that $\Sigma$ is a component of $\VV_{d-2}(G)$ 
(resp. $\VV_{d-1}(G)$) if $d$ even (resp. odd).
\item\label{prop-obs2-2} 
If $\one\notin\Sigma$, then $\Sigma$ is a component of $\VV_d(G)$. 
\item\label{prop-obs2-3} 
If $\one\notin\Sigma$ and $d>2$, then its shadow
is an irreducible component of $\VV_1(G)$.
\item\label{prop-obs2-5} 
If $\one\notin\Sigma$ and $d=2$, then its shadow
is an irreducible component of $\VV_1(G)$ if and only if it is for $\VV_2(G)$.
\item\label{prop-obs2-6} 
If $\one\notin\Sigma$ and $d=1$, then its shadow
is not an irreducible component of $\VV_1(G)$.
\end{enumerate}
\end{prop}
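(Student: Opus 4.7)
The plan is to reduce every assertion to a cohomological computation on an orbifold curve via Theorem~\ref{thm-orb}. That theorem produces a surjective orbifold morphism $\rho\colon X\to C_\varphi$ and an irreducible component $\Sigma_1$ of $\VV_k(\pi_1^{\rm\text{orb}}(C_\varphi))$ with $\Sigma=\rho^*(\Sigma_1)$; since $\rho^*$ is injective on the identity component of the character torus, one has $\dim\Sigma_1=d$. We work with a smooth quasi-projective model $X$ for $G$ and, replacing $\rho$ by its Stein factorization if necessary, we may assume its generic fibre is connected.

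The next step is to classify the positive-dimensional irreducible components of $\VV_k(\pi_1^{\rm\text{orb}}(C_\varphi))$ together with the generic value of $h^1$ on each. Write the underlying Riemann surface of $C_\varphi$ as a genus $g$ surface with $n$ punctures and $s$ orbifold points. The identity connected component $\T_{\rm orb}^{\one}$ of the character torus has dimension $b=2g$ if $n=0$, and $b=2g+n-1$ if $n\ge 1$. A direct Euler-characteristic calculation (Poincar\'e duality in the compact case, the free presentation in the punctured case) yields generic $h^1=b-2$ on $\T_{\rm orb}^{\one}$ when $C$ is compact and $b-1$ when $C$ is non-compact. A nontrivial torsion-translate $\sigma\T_{\rm orb}^{\one}$ receives an additional contribution of $+1$ coming from each orbifold point where $\sigma$ restricts nontrivially, raising the generic $h^1$ by one on such components (to $b-1$ and $b$ respectively).

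With this data the five assertions reduce to case analysis. For (1), $\one\in\Sigma$ forces $\one\in\Sigma_1$; by the classification, the only positive-dimensional irreducible component of $\VV_k(\pi_1^{\rm\text{orb}}(C_\varphi))$ through the identity is $\T_{\rm orb}^{\one}$ itself, so $d=b$. The compact case yields $k\le b-2=d-2$ with $d$ automatically even, and the non-compact case yields $k\le b-1=d-1$ with $d$ of either parity; taking the worst bound gives $k\le d-1$ always, while an odd $d$ forces the non-compact case and hence $\Sigma$ is a component of $\VV_{d-1}$, an even $d$ admits the compact model and guarantees at least membership as a component of $\VV_{d-2}$. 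For (2), $\one\notin\Sigma$ forces $\Sigma_1$ to be a torsion-translate, on which the generic $h^1$ equals $d$, showing that $\Sigma$ is a component of $\VV_d(G)$. For (3)--(5), the shadow equals $\Sh\Sigma=\rho^*(\T_{\rm orb}^{\one})$ and its generic $h^1$ equals that on $\T_{\rm orb}^{\one}$, which is $d-1$ in the non-compact setting (the only one producing translated components of dimension $d$). The inequality $d-1\ge 2$ for $d>2$ gives (3); the borderline equality $d-1=1$ for $d=2$ produces the two-way implication in (4); and $d-1=0$ for $d=1$ gives (5).

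The principal technical obstacle is establishing the ``bump by one'' of the generic $h^1$ on torsion-translated components, and more delicately controlling how $h^1$ can jump along $\Sh\Sigma$ at the identity character. The borderline case in (4) sits precisely between (3) and (5) and requires a careful accounting of the contribution of each orbifold point to the twisted cohomology --- carried out either via the orbifold Mayer--Vietoris sequence, or, equivalently, via the logarithmic de Rham complex adapted to the orbifold weights --- to see that membership of $\Sh\Sigma$ in $\VV_1(G)$ as a component and membership in $\VV_2(G)$ as a component are controlled by the same orbifold invariant.
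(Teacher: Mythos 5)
The paper gives no proof of this proposition---it defers parts \ref{prop-obs2-5} and \ref{prop-obs2-6} to Dimca's paper and the rest to a forthcoming preprint---so there is no direct comparison to make, and I must assess your argument on its own terms.

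Your overall strategy (invoke Theorem~\ref{thm-orb} to reduce to an orbifold curve and then count twisted cohomology there) is indeed the natural one and is consistent with the paper's remark that the exceptional cases correspond to orbifold pencils over $\C^*$ or elliptic pencils. However, there are two substantive gaps. First, you conflate $h^1(G,\rho^*\xi)$ with $h^1(\pi_1^{\rm orb}(C_\varphi),\xi)$. The exact sequence of Remark~\ref{rem-exseq}, $\pi_1(F)\to\pi_1(X)\to\pi_1^{\rm orb}(C_\varphi)\to 1$, only yields a monomorphism $H^1(\pi_1^{\rm orb},\xi)\hookrightarrow H^1(G,\rho^*\xi)$ via the five-term / Lyndon--Hochschild--Serre sequence; the cokernel comes from $H^1(\pi_1(F))$ with its twisted invariants and is not shown to vanish or even to be constant on each connected component of $\T_{\rm orb}$. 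Your ``bump by one per orbifold point'' is a correct computation for the free-product orbifold group, but it is the wrong object: the proposition is about $h^1$ of $G$, and the fiber contribution is exactly where the subtlety lies, as you yourself note in the final paragraph without resolving it. Second, the parenthetical ``the non-compact setting (the only one producing translated components of dimension $d$)'' is false: compact orbifold bases do produce translated positive-dimensional components (the paper explicitly names elliptic pencils, i.e. compact genus~$1$), and for $d>2$ compact genus $g\ge 2$ orbifolds also occur. Accepting that claim would make part~\ref{prop-obs2-5} vacuous for the elliptic case, whereas the point of the statement is precisely to distinguish elliptic from non-elliptic $d=2$ shadows.

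These two gaps together make part~\ref{prop-obs2-5} unsupported. If one applies your dimension count as literally stated to a translated $2$-torus coming from an orbifold pencil over a three-punctured $\P^1$, the generic orbifold $h^1$ on the shadow is $1$, which would place the shadow in $\VV_1(G)$ but not $\VV_2(G)$---contradicting the asserted ``if and only if''. The resolution must come from the fiber term that your argument omits (or from showing that such a pencil cannot in fact produce a translated component), but nothing in the write-up controls it. Similarly, in part~\ref{prop-obs2-2} the claim that the generic $h^1$ on a translated component ``equals $d$'' is too strong: with your own count it is $b-1+r$ (non-compact, $r\ge 1$ orbifold points where the torsion acts nontrivially) or $b-2+r$ (compact, $r\ge 2$), so it is $\ge d$ rather than $=d$; the conclusion still follows via Proposition~\ref{prop-obs1}\ref{prop-obs1-4}, but the stated equality is incorrect. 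For part~\ref{prop-obs2-1} you also need, and do not argue, that a positive-dimensional component of $\VV_k(\pi_1^{\rm orb})$ through $\one$ must be all of $\T_{\rm orb}^\one$; this is true because the orbifold group is a free product (resp.\ surface group), but it is a statement requiring proof.
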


\begin{rem}
The results in Proposition~\ref{prop-obs2}\ref{prop-obs2-5}, and~\ref{prop-obs2-6} can be found 
in~\cite{Di3}. All of them will appear in~\cite{AC-prep}. The cases where the shadow is not in the 
characteristic variety correspond, according to Theorem~\ref{thm-orb}, to either orbifold pencils 
over $\C^*$ or elliptic pencils.
\end{rem}

\begin{definition}
A subspace $0\neq V\subset H^1(G,\C)$ is called $0$-\emph{isotropic} (resp. $1$-\emph{isotropic}) 
if the restriction $\cup_V$ of the cup-product map $\cup_G: \bigwedge^2 H^1(G,\C)\to H^2(G,\C)$ is 
equivalent to $\cup_C: \bigwedge^2 H^1(C,\C)\to H^2(C,\C)$ for $C$ a non-compact (resp. compact) 
smooth connected complex curve (see also~\cite[Definition~6.5]{DPS0}).
\end{definition}

\begin{prop}[\cite{DPS0,DPS4}]
Let $\Sigma$ be an irreducible component of $\VV_1(G)$, $G\in\QP$. Let $V\subset H^1(G,\C)$ be the 
tangent space of $\Sh \Sigma$ at~$\one$. Then $V$ is $p$-isotropic, $p\in \{0,1\}$.
\end{prop}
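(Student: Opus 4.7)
The plan is to produce $V$ as the pullback of $H^{1}$ of an algebraic curve, and then transfer the cup-product question entirely to that curve. Since $V\neq 0$ the component $\Sigma$ must be positive-dimensional, so Theorem~\ref{thm-orb} furnishes a surjective orbifold morphism $\rho\colon X\to C_{\varphi}$ (with $X$ smooth quasi-projective and $\pi_{1}(X)=G$) together with a positive-dimensional component $\Sigma_{1}\subset \VV_{1}(\pi_{1}^{\rm\text{orb}}(C_{\varphi}))$ such that $\Sigma=\rho^{*}(\Sigma_{1})$. Since $\rho^{*}$ is a homomorphism of algebraic tori, translating back to $\one$ gives $\Sh\Sigma=\rho^{*}\bigl(\T_{\pi_{1}^{\rm\text{orb}}(C_{\varphi})}^{\one}\bigr)$.

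Next, I would identify $V$ concretely. Differentiating the preceding equation at $\one$ exhibits $V$ as the image of
\[
\rho^{*}\colon H^{1}(\pi_{1}^{\rm\text{orb}}(C_{\varphi}),\C)\longrightarrow H^{1}(G,\C).
\]
The orbifold relations only introduce torsion in $H_{1}$, so the source canonically coincides with $H^{1}(C,\C)$ for the underlying (non-orbifold) smooth quasi-projective curve $C$. Since $\rho$ has connected generic fiber, $\rho^{*}$ is injective on $H^{1}$, so it realizes a linear isomorphism $H^{1}(C,\C)\cong V$.

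I would then invoke functoriality of cup product. The map $X\to K(G,1)$ induces the natural comparison $H^{*}(G,\C)\to H^{*}(X,\C)$, which is an isomorphism on $H^{1}$ and injective on the relevant piece of $H^{2}$, and $\rho^{*}$ is a ring homomorphism. Hence for $\alpha,\beta\in H^{1}(C,\C)$ we have $\rho^{*}\alpha\cup_{G}\rho^{*}\beta=\rho^{*}(\alpha\cup_{C}\beta)$, i.e.\ the form $\cup_{V}$ is equivalent to $\cup_{C}\colon \bigwedge^{2}H^{1}(C,\C)\to H^{2}(C,\C)$. Two cases then finish the argument: if $C$ is non-compact, $H^{2}(C,\C)=0$ forces $\cup_{V}=0$ and $V$ is $0$-isotropic; if $C$ is compact, it must have genus $g\geq 1$ (otherwise $\pi_{1}^{\rm\text{orb}}(C_{\varphi})$ is finite or has no positive-dimensional characters, contradicting the existence of $\Sigma_{1}$), and $\cup_{C}$ is the standard non-degenerate alternating form, so $V$ is $1$-isotropic.

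The main obstacle I anticipate is the careful bookkeeping for the orbifold structure: one has to verify that the possible torsion translation in Theorem~\ref{thm-orb} does not perturb the tangent directions at $\one$ (which follows from the defining property of $\Sh\Sigma$), and that the pullback comparison on $H^{2}$ preserves non-degeneracy of $\cup_{C}$ when $C$ is compact. Once these functorial identifications are in place, the isotropy conclusions are immediate from the cohomology ring of a smooth curve.
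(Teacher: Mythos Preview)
The paper does not supply its own proof of this proposition; it is quoted from \cite{DPS0,DPS4}. Your approach---produce the pencil map to a curve via the structure theorem, identify $V$ with $\rho^*H^1(C,\C)$, and read off the cup-product behaviour from that of the curve---is exactly the line of argument in those references, so there is nothing materially different to compare.

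One comment on your anticipated obstacle in the compact case: it is not actually needed for the proposition \emph{as stated}. The image of $\cup_V$ lies inside the image of $\rho_\#^*\colon H^2(\pi_1(C),\C)\to H^2(G,\C)$, which is at most one-dimensional since $H^2(\pi_1(C),\C)\cong\C$. If that image vanishes then $\cup_V=0$ and $V$ is $0$-isotropic; if it is one-dimensional then $\rho_\#^*$ is injective on $H^2$, so $\cup_V$ is the non-degenerate form $\cup_C$ followed by an injection, hence still non-degenerate, and $V$ is $1$-isotropic. Either way $p\in\{0,1\}$, which is all that is claimed. The sharper statement that a compact base forces $p=1$ is true in the quasi-projective setting but is a separate matter. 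A genuinely small point you should add: the identity $\Sh\Sigma=\rho^*\bigl(\T^{\one}_{\pi_1^{\rm orb}(C_\varphi)}\bigr)$ relies on the shadow of $\Sigma_1$ being the full identity component of the orbifold character torus; this holds because positive-dimensional components of $\VV_1$ for a one-dimensional orbifold group are always entire connected components of its character torus, but it deserves a sentence of justification.
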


We finish with a new obstruction which will appear in a forthcoming paper~\cite{ACM-prep}.

\begin{prop}
\label{prop-suma}
Let $\Sigma_1$ be an irreducible component of $\VV_k(G)$, $G\in\QP$, and let $\Sigma_2$ be an 
irreducible component of $\VV_\ell(G)$. 
If $\xi\in\Sigma_1\cap\Sigma_2$ a torsion point, then $\xi\in\VV_{k+\ell}(G)$.
\end{prop}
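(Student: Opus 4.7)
The plan is to use Theorem~\ref{thm-orb} to reduce the statement to an independence property for the pieces of $H^1(G,\C_\xi)$ that each component $\Sigma_i$ contributes via its associated orbifold pencil. Set $k_1:=k$ and $k_2:=\ell$, and focus on the substantive case $\Sigma_1\neq\Sigma_2$ (if $\Sigma_1=\Sigma_2$ the assertion degenerates to a statement about the depth of a single component, already recorded in Proposition~\ref{prop-obs1}\ref{prop-obs1-4}). One may also assume to begin with that both $\Sigma_i$ are positive-dimensional; the isolated-torsion-point cases are reduced to the positive-dimensional ones by a separate semicontinuity comparison at the end.

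First apply Theorem~\ref{thm-orb} to each $\Sigma_i$ to obtain orbifold morphisms $\rho_i\colon X\to C_{\varphi_i}$ (with $X$ a smooth quasi-projective model of $G$) and torsion characters $\eta_i\in\VV_{k_i}(\pi_1^{\text{orb}}(C_{\varphi_i}))$ with $\xi=\rho_i^*(\eta_i)$; in particular $\dim H^1(\pi_1^{\text{orb}}(C_{\varphi_i}),\C_{\eta_i})\ge k_i$. Next, the Lyndon-Hochschild-Serre five-term exact sequence attached to the short exact sequence of Remark~\ref{rem-exseq}, with twisted coefficients $\C_\xi$, yields injectivity of each pullback
\[
\rho_i^*\colon H^1\bigl(\pi_1^{\text{orb}}(C_{\varphi_i}),\C_{\eta_i}\bigr)\longrightarrow H^1(G,\C_\xi);
\]
the obstruction term vanishes because the restriction of $\C_\xi$ to a generic fiber $F_i$ is trivial in the relevant eigenspace. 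Let $V_i:=\im\rho_i^*\subset H^1(G,\C_\xi)$, so $\dim V_i\ge k_i$.

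The core step is to show $V_1\cap V_2=0$. A convenient framework is the finite abelian Galois cover $p\colon\widetilde{X}\to X$ associated to $\ker\xi$: on $\widetilde{X}$ the local system $\C_\xi$ trivializes, and $H^1(G,\C_\xi)$ is identified with the $\xi$-isotypic component of $H^1(\widetilde{X},\C)$ for the Galois action. Under this identification $V_i$ is contained in the pullback $\widetilde{\rho}_i^{*}H^1(\widetilde{C}_i,\C)$ for appropriate lifts $\widetilde{\rho}_i\colon\widetilde{X}\to\widetilde{C}_i$ of $\rho_i$ to orbifold covers of $C_{\varphi_i}$. The distinctness $\Sigma_1\neq\Sigma_2$ forces the two lifted pencils to be inequivalent, and a Castelnuovo-de~Franchis-type argument (one of the key ingredients in Arapura's proof of Theorem~\ref{thm-arapura}) shows that pullback subspaces of $H^1(\widetilde{X},\C)$ coming from inequivalent pencils on a smooth quasi-projective surface intersect trivially. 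Hence $V_1\oplus V_2\hookrightarrow H^1(G,\C_\xi)$ has dimension at least $k+\ell$, giving $\xi\in\VV_{k+\ell}(G)$.

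The hard part is precisely this independence step: turning the geometric distinctness of two orbifold pencils into the vanishing $V_1\cap V_2=0$ inside a twisted $H^1$. A cleaner route may avoid the covering altogether and argue directly via the mixed Hodge structure on $H^1(G,\C_\xi)$, showing that $V_1$ and $V_2$ live in incompatible weight/Hodge pieces unless the two pencils share a common further factorization---which is excluded by $\Sigma_1\neq\Sigma_2$. The remaining mixed case (one $\Sigma_i$ isolated and torsion, the other positive-dimensional) then follows by a semicontinuity comparison of $\dim H^1(G,\C_{\xi'})$ as $\xi'$ varies along $\Sigma_j$, isolating at $\xi$ the extra $k_i$-dimensional jump contributed by the isolated component beyond the $k_j$-dimensional generic piece from $\Sigma_j$.
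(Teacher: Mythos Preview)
The paper does not actually prove Proposition~\ref{prop-suma}: it is announced as ``a new obstruction which will appear in a forthcoming paper~\cite{ACM-prep}'' and is stated without proof. So there is no argument here to compare your proposal against; what follows is a comment on the proposal itself.

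For the main case where both $\Sigma_i$ are positive-dimensional, your outline is the natural one and matches the strategy underlying Theorem~\ref{thm-orb}: pull back cohomology from the two orbifold curves via the inflation map (whose injectivity is indeed automatic from the five-term sequence), pass to the finite abelian cover trivializing $\xi$, and separate the two images using the Castelnuovo--de~Franchis/Arapura mechanism. One point to sharpen: the statement you need is not quite Castelnuovo--de~Franchis (which produces a pencil from a $2$-dimensional isotropic subspace) but the fact that pullback subspaces $\rho_1^*H^1(\widetilde{C}_1)$ and $\rho_2^*H^1(\widetilde{C}_2)$ from two \emph{distinct} pencils with connected fibers intersect trivially in $H^1(\widetilde{X})$. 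This is standard in the compact setting via Hodge theory, but in the quasi-projective case the mixed Hodge structure argument you allude to requires care (logarithmic forms, weight filtration); you should make this step explicit rather than gesture at it.

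Your treatment of the case where one $\Sigma_i$ is an isolated torsion point has a genuine gap. Semicontinuity of $\dim H^1(G,\C_{\xi'})$ along $\Sigma_j$ only gives $\dim H^1(G,\C_\xi)\ge k_j$, and knowing separately that $\xi\in\VV_{k_i}$ gives $\dim H^1(G,\C_\xi)\ge k_i$; together these yield only $\dim H^1(G,\C_\xi)\ge\max(k_i,k_j)$, not $k_i+k_j$. There is no well-defined ``extra $k_i$-dimensional jump'' to isolate without further structure: you would need to identify a $k_i$-dimensional subspace of $H^1(G,\C_\xi)$ transverse to $V_j$, and nothing in your argument produces one when $\Sigma_i$ is a bare point with no associated pencil. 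The actual proof (in~\cite{ACM-prep}) presumably handles this either by showing such isolated points also admit an orbifold interpretation compatible with the direct-sum argument, or by a finer analysis of the jump locus on the cover; in any case this part of your sketch does not close.
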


\section{Alexander Invariants}
\label{sec-alex}
Let $G$ be a finitely generated group, consider $H:=H_1(G; \Z)=G/G'$ and $\psi$ a 
surjective homomorphism from~$H$ onto a group $A$ (note $A$ has to be abelian). 
We are mostly interested in the case $H=A$ (as in \S\ref{sec-charvar}) but it is not more difficult
to work in this more general situation.
Note that there is a short exact sequence
$$
0 \ \rightarrow \ K \ \rightmap{\varphi} \ 
G \ \rightmap{\tilde{\psi}} \ A \ 
\rightarrow \ 0.
$$
The group $A$ acts on $M^A_G:=H_1(K; \Z)$ by conjugation.
This makes $M^A_G$ a module over the group algebra $\Lambda^A_\Z:=\Z[A]$.
Note that if $A=H$ then $K=G'$

\begin{definition}.
The $\Lambda^A_\Z$-module $M^A_G$ is the \emph{first Alexander Invariant} of
$G$ with respect to $A$.
Analogously, one can define the \emph{first Alexander Invariant} 
$M^A_{\K,G}$ of $G$ with respect to $A$ over $\K$ as a $\Lambda^A_\K$-module, where 
$\Lambda^A_\K:=\K\otimes_\Z \Lambda^A_\Z$.  We will drop the superscript $A$ when $A=H$. 
\end{definition}

This is a powerful invariant, but it is not easy to deal with it directly.
A classical object of study is the set of Fitting invariants of $M^A_G$. Consider 
$$
\left(\Lambda^A_\Z\right)^m \ \rightmap{\phi} \ \left(\Lambda^A_\Z\right)^r \ \rightarrow \ M^A_G \ \rightarrow \ 0
$$
a finite free presentation of the $\Lambda^A_\Z$-module $M^A_G$. 
Let $\Mat(\phi)$ be the $(r \times m)$ matrix, with coefficients in $\Lambda^A_\Z$, of $\phi$.

\begin{definition}
\label{deffitting}
The $k$-th \emph{Fitting ideal} of $M^A_G$ is defined as the 
ideal generated by
$$
f_k^A:=
\left\{
\array{ll}
0 & \text{if \ } k \leq \max\{0,r-m\} \\
1 & \text{if \ } k > r \\
\text{minors of } \Mat(\phi) \ \text{of order } 
(r-k+1) & \text{otherwise.}
\endarray
\right.$$
Such an ideal does not depend on the free presentation of $M^A_G$ 
and it is denoted by $F^A_k$ if no ambiguity 
seems likely to arise. Analogously, one can define $F_k(M^A_{\K,G})$.
\end{definition}

\begin{rem}\label{rem-fitting}
For computational purposes (when $A$ is a free abelian group, for simplicity), if one writes a 
presentation of $G$ of the following type 
$$
G=\langle x_1,...,x_r,y_1,...,y_s : \bar R(\bar x,\bar y)=0 \rangle,
$$
where $x_1,...,x_r$ freely generate $A$ and $y_1,...,y_s$ are trivial after the abelianization morphism, 
then $M^A_\Z$ admits a presentation 
\begin{equation}
\label{mat-phi}
R \Lambda^A_\Z \oplus J \Lambda^A_\Z \ \rightmap{\phi}\ 
\left( \bigoplus_{1\leq i<j\leq r} x_{ij} \Lambda^A_\Z \right)\bigoplus \left( \bigoplus_{k=1}^s y_k \Lambda^A_\Z \right)
\to M^A_\Z \to 0
\end{equation}
as a $\Lambda^A_\Z$-module generated by $x_{ij}:=[x_i,x_j]$, $1\leq i<j\leq r$ and $y_1,...,y_s$. 
A complete set of relations can be given by the \emph{Jacobi relations}:
$$
(t_i-1)x_{jk}+(t_j-1)x_{ki}+(t_k-1)x_{ij}=0
$$
and the rewriting of $\bar R(\bar x,\bar y)=0$ in terms of $x_{ij}$ and $y_k$. For a detailed description
of this module see~\cite[\S~2.5]{ZPairs}.
\end{rem}

The ring $\Lambda^A_\C$ as a $\C$-algebra of finite type, is the ring of functions of an
affine variety $\T^A_G$ which is a (maybe 
non-connected) complex torus (like in~\eqref{eq-torus}, replacing $H$ by $A$)

\begin{definition}\label{def-char-alex}
The reduced zero locus of $F_k(M^A_{\C,G})$ is the \emph{$k$-th characteristic variety of $G$ with respect to $A$} and 
is denoted 
by $\Char^A_k(G)$.
\end{definition}

\begin{rem}
The Definitions~\ref{def-char-alex} of $\Char^H_k(G)$ and~\ref{def-char-var} of $\Char_k(G)$
agree with the convention of forgetting the superscript (with the exception of the behaviour of $\one$), see, for instance,~\cite{Li3} or a sketch in Remark~\ref{rem-compare}\ref{rem-compare3}.
\end{rem}

In case $A$ is a free abelian group of rank $r$, then $\Z [A]=\Z[t_1^{\pm 1},...,t_r^{\pm 1}]$ is the ring of 
Laurent polynomials in $n$ variables and $\Q [A]$ is a UFD. 

\begin{definition}
For $A$ a free abelian group, the \emph{multi-variable Alexander polynomial of $G$ with respect to $A$} 
is the Laurent polynomial $\D^A_{G}$ in $\Q [A]$ defined by $\D^A_G=\gcd \,F_1(M^A_G)$.
Analogously, one has the \emph{multi-variable Alexander polynomial of $G$ with respect to $A$} over $\K$. 
\end{definition}

\begin{rem}
Note that $\D_{\K,G}$ is only well defined up to multiplication by a unit in $\Lambda_\K$, and hence any
equality involving $\D_{\K,G}$ has to be considered up to a unit.
\end{rem}

\begin{example}[\cite{ZPairs}]
Consider the group $G:=\langle a,b \mid aba=bab, [a,a^2b^2]=[b,a^2b^2]=1 \rangle$ and the 
abelianization morphism $G\to \Z$, then 
$$
M_{\K,G}=
\begin{cases} 
\frac{\K[t^{\pm 1}]}{(t+1)} & \text{ if } \car(\K)=3 \\ 0 & \text{ otherwise.}
\end{cases}
$$
Therefore 
$$\Char_k(\K,G)=
\begin{cases} 
\{-1\}\subset \K^* & \text{ if } \car(\K)=3 \\ \emptyset & \text{ otherwise,} 
\end{cases}
\ \text{and} \ 
\D_{\K,G}(t)=\begin{cases} 
t+1 & \text{ if } \car(\K)=3 \\ 1 & \text{ otherwise.} 
\end{cases}
$$
\end{example}

As in \S\ref{sec-charvar}, another fundamental approach to these invariants takes place when $G$ is considered as the fundamental group of $X$, a 
connected CW-complex of finite type (which we can assume has a single 0-cell $e_0$). In this case $M_G^A$ is
nothing but $H_1(X_K; \Z)$ where $X_K$ is the covering of $X$ associated with the subgroup $K=\ker (G\to A)$.
In this scenario, we can define new invariants; let $\tilde e_0$ be the preimage of the $0$-cell in $X_K$.

\begin{definition}
The \emph{Alexander module of $X$ with respect to $A$} is the relative homology $H_1(X_K,\tilde e_0; \Z)$
(as a $\Lambda^A_\Z$-module) and we will denote it by $\tilde M^A_G$.
Starting off with $\tilde M^A_G$ and analogously to the previous discussion, one can define characteristic
varieties ${\widetilde \Char}_k(\K,G)$ and multi-variable Alexander polynomials $\tilde \D^A_{\K,G}$ associated 
with $\tilde M^A_{\K,G}$.
\end{definition}

\begin{rem}
The relationship between $\tilde M^A_G$ and $M^A_G$ is clearly given by the exact sequence of the pair 
$(X_K,\tilde e_0)$, that is,
\begin{equation}
\label{eq-aug}
0 = H_1(\tilde e_0) \to M^A_G \to \tilde M^A_G \to \ker \left(H_0(\tilde e_0) \to H_0(X; \Z)\right) \to 0.
\end{equation}
Since $H_0(\tilde e_0)=\Z A e_0$, $H_0(X; \Z))=\Z e_0$, and the map is given by $a e_0 \mapsto 1$, its kernel
is nothing but the augmentation ideal 
$I_{\Z[A]}=\left\{\sum_{a\in A} n_a  \mid \sum_{a\in A} n_a=0\right\}$.
 \end{rem}

A free presentation of the Alexander module can be given from the following chain map using Fox derivation 
with respect to $A$:
\begin{equation}\label{eq-chaincomplex}
C_2^A(X) \otimes \Z[A] \ \rightmap{\tilde \delta_2}\ C_1^A(X) \otimes \Z[A] \to  \tilde M^A_G \to 0. 
\end{equation}
In order to describe the boundary map $\tilde \delta_2$, let us fix for any 2-cell $e_2 \in C_2(X)$ a certain 
closed path $\partial e_2$ representing its boundary as induced by the cell map on the boundary. It might
happen that such map is constant. In that case $\tilde{\delta}_2(e_2 \otimes a)=0$. Otherwise 
$\partial e_2$ can be written as a composition of closed paths (1-cells), say 
$\partial e_2=x_1^{\varepsilon_1} \cdot x_2^{\varepsilon_2} \cdot ... \cdot x_n^{\varepsilon_n}$,
where $\varepsilon_1=\pm 1$. Then $\tilde{\delta}_2$ can be described recursively as a function of its 
boundary $\tilde{\delta}_2(e_2 \otimes a)=D(\partial e_2,a)$ where 
\begin{equation}
\label{defD}
D( x^{\varepsilon} \cdot y,a)=
\left\{ \array{ll}
x \otimes a + D \left(y,\psi(x)a \right) 
& {\rm if \ } \varepsilon =1 \\
-x \otimes \psi^{-1}(x)a+D\left(y,\psi^{-1}(x)a \right) 
& {\rm if \ } \varepsilon =-1.
\endarray \right.
\end{equation}

\begin{rem}
The main computational advantage of $\tilde M^A_G$ over $M^A_G$ is the size of the representation matrices, see Remark~\ref{rem-fitting}. The 
number of rows (resp. columns) of the matrix $\Mat(\tilde \delta_2)$ is linear with respect to the number of generators 
(resp. relations) of $G$, whereas the number of rows (resp. columns) of the matrix $\Mat(\phi)$ described 
in~(\ref{mat-phi}) is quadratic (resp. cubic) with respect to the number of generators (resp. relations) of $G$.
\end{rem}

From~(\ref{eq-aug}) one can easily see that 
\begin{equation}
\label{charfox}
\Char_k(G) \setminus \{\one\}={\widetilde \Char}_{k+1}(G) \setminus \{\one\}.
\end{equation}
(see for instance~\cite{ji-thesis} for a proof).

Analogously, one can easily check that 
\begin{equation}
\label{alexfox}
\tilde \D^A_G = 
\begin{cases}
\D^A_G	& \text{ if } \rank A>1 \\
(t-1)^k \D^A_G & \text{ if } \rank A=1
\end{cases}
\end{equation}

\begin{rem}\label{rem-compare}
Let us compare these arguments with the chain complex introduced in Remark~\ref{rem-cell}. For $A=H$, $X_K=\tilde{X}$ and
$C_*(\tilde{X};\C)$ is 
\begin{equation*}
0\to C_2(\tilde{X};\C)\ \rightmap{\delta_2}\ C_1(\tilde{X};\C) \ \rightmap{\delta_1}\ C_0(\tilde{X};\C)\to 0;
\end{equation*}
the map $\delta_2$ is the complexification of $\tilde\delta_2$ in~\eqref{eq-chaincomplex}. For $\xi\in\Hom(G;\C^*)$, $\xi\neq\one$, $\C$ has a natural structure of $\Lambda_\C$-module denoted by $\C_\xi$. Given any $\Lambda_\C$-module~$V$
we can  produce a \emph{twisted} $\C$-vector space $V_\xi:=V\otimes_{\Lambda_\C}\C_\xi$.
We have the following properties:
\begin{enumerate}
\enet{(M\arabic{enumi})}
\item The complexified Alexander Invariant $M_{\C,G}$ is the homology in degree~$1$ of $C_*(\tilde{X};\C)$.
\item The complexified Alexander Module $\tilde{M}_{\C,G}$ is $\coker{\delta_2}$.
\item\label{rem-compare3} For $\xi\neq\one$, $\xi\in\Char^H_k M$ if and only if $\dim (M_{\C,G})_\xi\geq k$,
i.e., if and only if $\xi\in \Char_k(G)$. This follows from the fact that the operations \emph{taking homology} and $\otimes_{\Lambda_\C}\C_\xi$ commute. 
\end{enumerate}
\end{rem}

The systematic use of the Alexander polynomial to distinguish quasi-projective groups has been known
since Zariski~\cite{Zariski-irregularity}, even though technically the invariant was defined later
by Libgober~\cite{Li8}.

The following property will be strongly used in this paper as an obstruction to quasi-projective groups:

\begin{thm}[{\cite[Theorem 4.3]{DPS4}}]
\label{thm-suciu}
Let $G=\pi_1(M)\in \QP$ be the fundamental group of a smooth, connected, complex quasi-projective variety.
\begin{enumerate}
\enet{\rm($\A$\arabic{enumi})}
\item\label{thm-suciu1} 
If $b_1(G)=r \neq 2$, then the Alexander polynomial $\Delta_{G}$ has a single essential variable, that is, 
there exist a Laurent polynomial $P\in \Z[t^{\pm 1}]$ such that $\Delta_G(t_1,...,t_r))=P(t_1^{n_1}\cdots t_r^{n_r})$.
 \item \label{thm-suciu2} 
If $b_1(G) \geq 2$, and $\Delta_G$ has a single essential variable, then either 
   \begin{enumerate}
    \item 
      $\Delta_G = 0$, or
    \item 
      $\Delta_{\C,G}(t_1, \dots ,t_r)= P(u)$ in $\C[t_1^{\pm 1},...,t_r^{\pm 1}]$, where $P$ is a product of 
cyclotomic polynomials (possibly equal to 1), and $u = t^{n_1} \cdots t^{n_r}$, with $\gcd(n_1,\dots, n_r)=1$.
 \item If $G\in \PP$, then $\Delta_G=const$.
   \end{enumerate}
\end{enumerate}
\end{thm}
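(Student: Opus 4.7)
The plan is to read off the irreducible factors of $\Delta_G$ from the codimension-one components of the characteristic variety $\VV_1(G)$ and then invoke the structure results from \S\ref{sec-charvar} to constrain their shape. Concretely, I would first verify that, when $\Delta_G\neq 0$, the hypersurface $\{\Delta_G=0\}\subset\T_G^{\one}=(\C^*)^r$ (with $r=b_1(G)$) coincides with the union of the codimension-one irreducible components of $\Char_1(G)\cap\T_G^{\one}$. This uses that $\Lambda^A_\C$ is a UFD (since $A=H/\Tors$ is free abelian of rank $r$) together with the fact that $\gcd F_1(M^A_{\C,G})$ cuts out precisely the codimension-one part of the support of $M^A_{\C,G}$.

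Next, I would invoke Theorem~\ref{thm-donu-nero} to write each codimension-one component $\Sigma_i$ as a torsion translate of a connected codimension-one subtorus $T_i\subset(\C^*)^r$. Each $T_i$ is the kernel of a primitive character $t\mapsto t_1^{n_{i,1}}\cdots t_r^{n_{i,r}}$ with $\gcd(n_{i,1},\ldots,n_{i,r})=1$, so the defining equation of $\Sigma_i$ reads $t_1^{n_{i,1}}\cdots t_r^{n_{i,r}}=\zeta_i$ for some root of unity $\zeta_i$.

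For Part~(A1) the cases $r\leq 1$ are vacuous, so I would assume $r\geq 3$ and compare two distinct codimension-one components $\Sigma_i,\Sigma_j$. If the direction vectors $n^{(i)}$ and $n^{(j)}$ were $\Z$-linearly independent, the homomorphism $(\C^*)^r\to(\C^*)^2$, $t\mapsto(t^{n^{(i)}},t^{n^{(j)}})$, would be surjective, so $\Sigma_i\cap\Sigma_j$ would be non-empty of dimension $r-2\geq 1$, contradicting Proposition~\ref{prop-obs1}\ref{prop-obs1-1}. Hence all codimension-one components are parallel, and $\Delta_G=P(t_1^{n_1}\cdots t_r^{n_r})$ for a single primitive tuple $(n_1,\ldots,n_r)$. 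Parts~(A2)(a)--(b) follow at once: if $\Delta_G=0$ we are in case~(a); otherwise each linear factor $(u-\zeta_i)$ of $P$ has $\zeta_i$ a root of unity by the torsion-translate assertion of Theorem~\ref{thm-donu-nero}, so $P$ is a product of cyclotomic polynomials.

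For Part~(A2)(c), I would use a parity argument. If $G\in\PP$ then $r=b_1(G)$ is even; by Theorem~\ref{thm-arapura} every positive-dimensional component of $\VV_1(G)$ arises, up to torsion translate, as $\rho^*(H^1(C;\C^*))$ for a surjective morphism onto a projective curve $C$, so it has even dimension $2g$. The codimension in $\T_G^{\one}$ is therefore $r-2g$, a difference of even numbers, hence even; no codimension-one component can occur and $\Delta_G$ must be a unit. I expect the main obstacle to be the very first step: rigorously matching the $\gcd$-zero locus with the codimension-one stratum of $\Char_1(G)$, in particular accounting for the exceptional role of $\one$ (noted in the remark after Definition~\ref{def-char-alex}) and for any contributions coming from the non-identity connected components of $\T_G$.
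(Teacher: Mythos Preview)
The paper does not give its own proof of this statement: Theorem~\ref{thm-suciu} is quoted from \cite{DPS4} and used as a black box (the only comment is the line following it, noting that part~\ref{thm-suciu2} generalizes Libgober's one-variable result). So there is no ``paper's proof'' to compare against.

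That said, your proposal is the right one and is essentially the argument carried out in \cite{DPS4}. The identification of $\{\Delta_G=0\}$ with the codimension-one part of $\VV_1(G)$ (up to the $\one$ issue you flag), the use of Theorem~\ref{thm-donu-nero} to get torsion-translated subtori, and the dimension count via Proposition~\ref{prop-obs1}\ref{prop-obs1-1} for $r\ge 3$ are exactly the ingredients. Your parity argument for the projective case is also correct: a surjective morphism from a projective $M$ forces the target curve to be projective, so the pulled-back subtorus has even dimension $2g$, and with $r$ even there can be no codimension-one component. The only point worth tightening is the one you already single out, namely the passage between $\Delta_G$ (computed for the free abelianization $A=H/\Tors$) and the characteristic variety inside $\T_G^{\one}$; this is handled in \cite{DPS4} by a base-change argument for Fitting ideals along $\Lambda^H_\C\to\Lambda^A_\C$.
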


Theorem~\ref{thm-suciu}\ref{thm-suciu2} is a generalization of the single-variable result of Libgober~\cite{Li8}. 
A generalization of the Alexander polynomial
for the twisted case can be found in~\cite{ji-20100}.

\section{Artin-Tits groups}
\label{sec-artin}
Let $\G$ be a simplicial graph with vertices $V$, edges $E$ 
(an element of $E$ is a subset of $V$ with two elements)
and a labeling $\ell:E\to\N_{\ge 2}$ of the edges. 
The \emph{Artin-Tits group} $A_{\G}$ associated to $\G=(V,E,\ell)$ is given 
by the presentation:
\[
A_{\G}=\langle v\in V\mid \underbrace{uvu\dots}_{\ell(e)\text{ times }}=
\underbrace{vuv\dots}_{\ell(e)\text{ times }}\,\, 
\text{ if } e:=\{u,v\}\in E \rangle.
\]
Artin groups associated with the constant map $\ell=2$ are called \emph{right-angled} Artin groups and 
their graphs will be denoted simply by $\G=(V,E)$.

The \emph{Coxeter group} $W_{\G}$ associated with $\G=(V,E,\ell)$ is 
the quotient of $A_{\G}$ obtained by factoring out by the squares of the
generators:
\[
W_{\G}=A_{\G}/\langle v^2,\,v\in V\rangle.
\]

\begin{rem}
It is useful to think of the Artin-Tits groups and Coxeter groups
as particular instances of a more general class. Let $\G=(V,E)$ be 
a graph as above. To each vertex $v\in V$ we associate a group $G_v$. 
To each edge $e=\{u,v\}\in E$ we associate a group $G_e=G_{u,v}$ of the form
$G_e=G_u*G_v/(R_e)$, where $(R_e)$ is the subgroup of the free product 
$G_u*G_v$ normally generated by a set of words $R_e$. 
Then the {\em Pride group} $P_{\G}$ is defined as 
\[
P_{\G}=*_{v\in V}G_v/(R_e,\,e\in E).
\]
For example, in the case of $A_{\G}$ the vertex groups are all infinite
cyclic $G_v=\Z$ and the edge group of $e=uv$ labeled by $m$ is the 
two generators, one-relator group 
\[
G_e=\langle u,v \mid \underbrace{uvu\dots}_{m\text{ times }}=
\underbrace{vuv\dots}_{m\text{ times }}\rangle. 
\]
In the case of $W_{\G}$ the vertex groups are cyclic of order $2$,
$G_v=\Z_2$.
%whereas the edge groups coincide with those of $A_{\G}$.
Some of the facts that are proved here easily generalize to Pride groups.
\end{rem}

\begin{definition}
An Artin-Tits group $A_{\G}$ is called \emph{spherical} or of \emph{finite type}
if its Coxeter group $W_{\G}$ is finite. Otherwise $A_{\G}$ is of \emph{infinite type}.
\end{definition}

Among Artin-Tits groups of infinite type we distinguish certain classes.

\begin{definition}
The Artin-Tits group $A_{\G}$ is called \emph{euclidean} (or \emph{affine})
if its Coxeter group $W_{\G}$ is euclidean. If $W_{\G}$ is of infinite 
non-euclidean type then $A_{\G}$ is of \emph{general type}. 
Furthermore, $A_{\G}$ is called \emph{hyperbolic} if $W_{\G}$ is hyperbolic. 
\end{definition}

\begin{example}
Let us show some examples of Artin-Tits groups.

\begin{enumerate}
\item If $\G, \G'$ are two arbitrary graphs, then 
$A_{\G\sqcup\G'}=A_{\G}*A_{\G'}$, 
where  $\G\sqcup\G'$ is the disjoint union.
\item If $\G_1,\dots,\G_k$ are the connected components of
$\G$ then  $A_{\G}$ is the free product $A_{\G_1}*\dots*A_{\G_k}$.
\item $A_{\G*\G'}=A_{\G}\times A_{\G'}$,
where  $\G*\G'$ is the join graph with vertices $V\cup V'$, 
edges $E\cup E'\cup\{\{v,v'\}\mid v\in V, v\in V'\}$ with new edges
having labels $\mu (\{v,v'\})=2$.
\item If $\G$ is the graph with no edges and $n$ vertices, then $\A_{\G}=\F_n$, the free group on $n$ generators.
\item If $\G=K_n$, the complete graph on $n$ vertices, then the right-angled 
Artin group $\A_{\G}$ is $\Z^n$, the free abelian group of rank $n$. 
\item If $\G=K_{n_1,\dots,n_r}$, the complete multipartite graph 
on $n$ vertices partitioned into subsets of sizes $n_1,\dots,n_r$, 
then right-angled group $\A_{\G}$ is $\F_{n_1}\times\dots\times\F_{n_r}$.
\item If $\G$ is the complete graph $K_n$ with labeling 
$\ell(\{i,j\})=2$ if $|i-j|\ge 2$, and $3$ otherwise, 
then $\A_{\G}=B_n$, the braid group on $n$ strings.
\item Let $\G$ be the one edge graph on two vertices with labeling $m\ge 2$.  
It is well known that $\A_{\G}=\langle u, v \mid uvu\dots=vuv\dots\rangle$ 
is isomorphic to the group $\pi_1(\C^2\setminus C)$ of the affine curve 
$C=\{x^2=y^m\}$ having an $\aA_{m-1}$ singular point at the origin. 
\item If $\G$ has two vertices and no edges, then the free group $\A_{\G}=\F_2$ is isomorphic to 
$\pi_1(\C^2\setminus C)$ for $C$ the union of two parallel lines in $\C^2$. 
\end{enumerate}
\end{example}

\begin{rem}
Thus $A_{\G}\in\QP$ for $\G$ a graph on two vertices. 
\end{rem}

\begin{example}
Another interesting case is that of graphs with three vertices.
All such graphs will be described by graphs $\G(p,q,r)$ 
as in Figure~\ref{fig-triangle}, with $p\geq q\geq r$, where 
$p,q,r\in \N\cup \{\infty\}$ means that there is no edge.
These groups will be denoted by $A(p,q,r)$.
\begin{enumerate}
\item If $\G$ has no edges, then $A_{\G}\in\H\subset\QP$
since it is the group of the complement of three parallel lines in~$\C^2$.

\item The case $\G(\infty,\infty,k)$ can also be disregarded as follows. In fact,
$\Char_1(A(\infty,\infty,k))=\T_{A_\G}$, whereas $\Char_2(A_\G)$ contains irreducible 
components of proper dimension, that is, positive but stricly less than 
the maximal. This contradicts Proposition~\ref{prop-obs1}\ref{prop-obs1-4}.

\item For the case where $\G$ has two edges there are partial results in 
Remark~\ref{rem-2edges} and Example~\ref{ex-2edges}.

\item For the general case $p,q,r\in \N$ there are partial results in 
Theorem~\ref{thm-triangle} and 
Examples~\ref{ex-333}-\ref{ex-spherical}, \ref{ex-666}, and~\ref{ex-542}.
\end{enumerate}
\end{example}

\begin{figure}[hb]
\centering
\includegraphics[scale=.5]{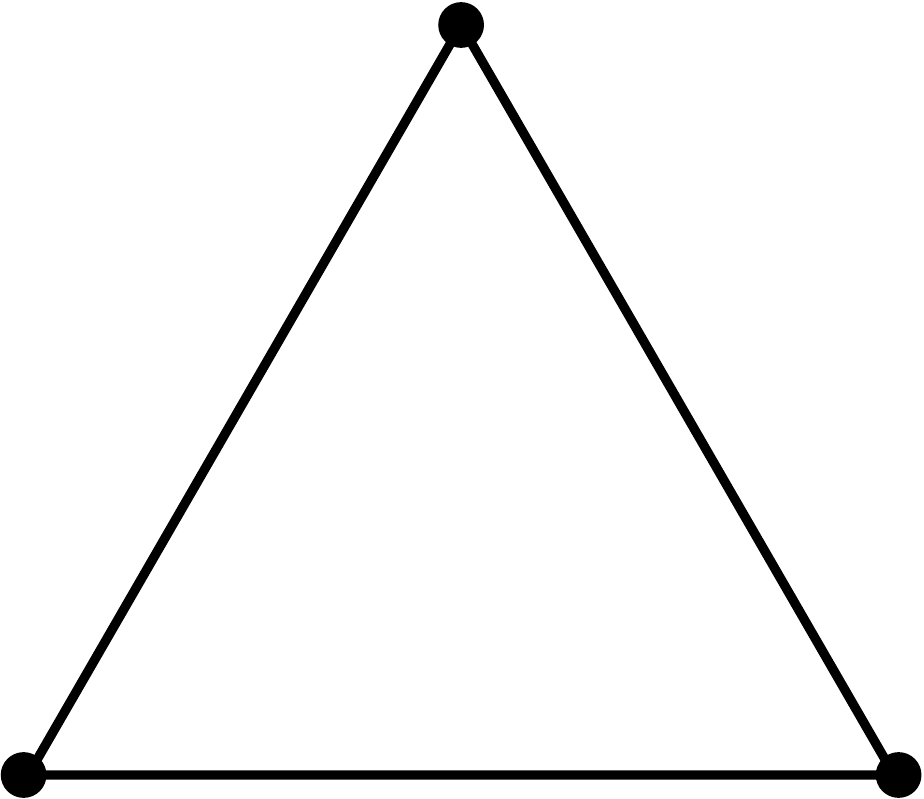}
\begin{picture}(0,0)
\put(0,0){3}
\put(-93,62){$r$}
\put(-74,116){1}
\put(-74,10){$p$}
\put(-148,0){2}
\put(-53,62){$q$}
\end{picture}
\caption{$\G(p,q,r)$}
\label{fig-triangle}
\end{figure}

We are interested in the following general question (see also~\cite{DPS0}):

\begin{problem}
\label{probl:artqproj}
Which Artin groups $A_{\G}$ are (quasi-)projective?
\end{problem}

This problem is partially solved for quasi-projective right-angled Artin groups as follows.

\begin{thm}[{\cite{DPS0,DPS1}}]\label{thm-ra}
The quasi-projective right-angled Artin groups $A_{\G}$ are
precisely those associated to a complete multipartite graph $\G=K_{n_1,\dots,n_r}$. 
In particular, $A_{\G}\in\QP$ if and
only if $A_{\G}=\F_{n_1}\times\dots\times\F_{n_r}$.
\end{thm}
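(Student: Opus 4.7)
The proof splits into two implications. For the easy direction, if $\Gamma = K_{n_1,\ldots,n_r}$ then $\Gamma$ is the iterated join $\bar K_{n_1} * \cdots * \bar K_{n_r}$ of the edgeless graphs on $n_i$ vertices, so by the join--product rule $A_{\Gamma_1 * \Gamma_2} = A_{\Gamma_1} \times A_{\Gamma_2}$ one obtains $A_\Gamma \cong F_{n_1} \times \cdots \times F_{n_r}$. Each $F_{n_i}$ is quasi-projective as $\pi_1(\P^1 \setminus \{n_i + 1 \text{ points}\})$, and $\QP$ is closed under direct products, so $A_\Gamma \in \QP$.

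For the converse, assume $A_\Gamma \in \QP$. I begin with a graph-theoretic reduction: $\Gamma$ is complete multipartite if and only if the relation ``$u = v$ or $\{u, v\} \notin E$'' is transitive on $V$, which is equivalent to $\Gamma$ containing no induced subgraph isomorphic to $K_2 \sqcup K_1$ (an edge together with an isolated vertex). So I assume for contradiction that there are $u, v, w \in V$ with $\{u, v\} \in E$ but $\{u, w\}, \{v, w\} \notin E$, and aim to violate one of the obstructions of Section~\ref{sec-charvar}.

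The core computation concerns the full-subgraph subgroup $A_{\{u,v,w\}} = \langle u, v, w \mid [u,v] \rangle \cong \Z^2 * \Z$. Fox calculus applied to $[u,v]$ gives the Alexander matrix $(1 - t_v,\; t_u - 1,\; 0)$, and a direct rank computation of the twisted chain complex yields $\VV_1(\Z^2 * \Z) = (\C^*)^3$ (the entire torus, since $H^1(\Z^2*\Z, \C_\xi)$ is at least one-dimensional for every character) while $\VV_2(\Z^2 * \Z) = \{t_u = t_v = 1\}$ is a proper one-dimensional subtorus. Thus the unique component of $\VV_2(\Z^2 * \Z)$ is strictly contained in the unique component of $\VV_1(\Z^2 * \Z)$, violating Proposition~\ref{prop-obs1}\ref{prop-obs1-4} and certifying $\Z^2 * \Z \notin \QP$. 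To transfer the obstruction to $A_\Gamma$, I use the retraction $r \colon A_\Gamma \twoheadrightarrow A_{\{u,v,w\}}$ killing every generator outside $\{u, v, w\}$ (well defined because each defining commutator of $A_\Gamma$ involving a killed generator becomes trivial, leaving only $[u,v]$). Functoriality of twisted cohomology under a retraction gives $r^*(\VV_k(\Z^2 * \Z)) \subseteq \VV_k(A_\Gamma)$ for every $k$.

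The main obstacle is this last transfer step: a retraction gives only an inclusion of characteristic varieties, so a priori the pullback of $\{t_u = t_v = 1\}$ could be absorbed into a larger component of $\VV_2(A_\Gamma)$, and similarly $r^*((\C^*)^3)$ might sit inside a larger component of $\VV_1(A_\Gamma)$, making the strict-inclusion defect disappear upstairs. To finesse this I would combine the inclusion with the Papadima--Suciu description of characteristic varieties of right-angled Artin groups as explicit unions of coordinate subtori indexed by combinatorial data of $\Gamma$, matching the bad subtorus to a bona fide component of $\VV_2(A_\Gamma)$ which is not a component of $\VV_1(A_\Gamma)$. A conceptually cleaner alternative uses the $1$-formality of $A_\Gamma$ together with the isotropy obstruction from the end of Section~\ref{sec-charvar}: the three-dimensional coordinate subspace $L = \C u^* \oplus \C v^* \oplus \C w^* \subset H^1(A_\Gamma, \C)$ lies in the resonance variety $R^1(A_\Gamma)$ (since $u^* \cup w^* = v^* \cup w^* = 0$ because $\{u,w\}, \{v,w\} \notin E$), yet $\cup|_L$ has one-dimensional image spanned by $u^* \cup v^*$ and is degenerate of rank $2$ as an alternating form, so the component of $R^1(A_\Gamma)$ containing $L$ cannot be either $0$-isotropic (cup product is nonzero) or $1$-isotropic (an even-rank non-degenerate form is required), contradicting the isotropy requirement on tangent spaces to positive-dimensional components of $\VV_1(A_\Gamma)$ at $\one$.
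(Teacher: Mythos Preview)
The paper does not give its own proof of this theorem; it is quoted as a result of Dimca--Papadima--Suciu \cite{DPS0,DPS1}. Your second route, via $1$-formality and the isotropy obstruction, is essentially the argument those authors use, and the easy direction is correct as written.

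Two points in the hard direction need tightening. First, you are implicitly invoking the tangent cone theorem for $1$-formal groups: the irreducible components of the resonance variety $R^1(A_\Gamma)$ through $0$ coincide with the tangent spaces at $\one$ of the positive-dimensional components of $\VV_1(A_\Gamma)$ through $\one$. This is what transports the isotropy obstruction (stated in this paper only for $\VV_1$) over to the resonance side, and it should be cited explicitly rather than folded into the phrase ``$1$-formality of $A_\Gamma$''.

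Second, and more seriously, you verify that $\cup|_L$ is degenerate of rank~$2$ and then conclude that the component $V\supseteq L$ of $R^1(A_\Gamma)$ is not $1$-isotropic. That inference is not valid as stated: a degenerate alternating form on a subspace can extend to a non-degenerate one on a larger space. The repair is short but necessary. If $V$ were $1$-isotropic, the image of $\cup|_V$ in $H^2(A_\Gamma,\C)$ would be one-dimensional and hence equal to $\C\,(u^*\cup v^*)$, since that class already lies in the image. But for every $b\in H^1(A_\Gamma,\C)$ the class $w^*\cup b$ lies in the span of $\{\,w^*\cup x^*:\{w,x\}\in E\,\}$, and in the monomial basis of $H^2(A_\Gamma,\C)$ these are linearly independent from $u^*\cup v^*$ (which involves only $u,v$). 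Hence $w^*$ lies in the radical of the induced pairing $V\times V\to \C\,(u^*\cup v^*)$, contradicting non-degeneracy. With these two fixes your argument is complete and matches the published one; your first route via the retraction to $\Z^2*\Z$ is, as you yourself note, inconclusive without the same structural input on the components of $\VV_k(A_\Gamma)$.
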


As a consequence of classical invariant theory for spherical and euclidean
Coxeter groups we have the following result.

\begin{thm}
\label{thm:deftype1}
All spherical and euclidean Artin-Tits groups $A_{\G}$ are hypersuface
groups.
\end{thm}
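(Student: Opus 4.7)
The plan is to invoke the classical Brieskorn--Deligne theorem for spherical Artin-Tits groups, together with its extension to affine Coxeter groups due to van der Lek, realizing $A_\G$ in each case as the fundamental group of the complement of an algebraic hypersurface in an affine space (and hence, by projective completion, in $\P^n$).

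For the spherical case, let $V\cong\C^n$ denote the complexified reflection representation of $W_\G$, and let $\mathcal{A}_\G=\bigcup_{r}\ker(r-1)$ be the union of the reflecting hyperplanes of $W_\G$. The Brieskorn--Deligne theorem asserts that $A_\G\cong\pi_1((V\setminus\mathcal{A}_\G)/W_\G)$. By the Chevalley--Shephard--Todd theorem, $V/W_\G\cong\C^n$, and the image of $\mathcal{A}_\G$ in this quotient is the \emph{discriminant}, a hypersurface $\Delta$ cut out by the Jacobian of a system of basic $W_\G$-invariants. Thus $A_\G\cong\pi_1(\C^n\setminus\Delta)$; embedding $\C^n\hookrightarrow\P^n$ as the complement of a hyperplane $H_\infty$ at infinity, we obtain $A_\G\cong\pi_1(\P^n\setminus(\overline{\Delta}\cup H_\infty))$, displaying $A_\G$ as a hypersurface group, i.e.\ $A_\G\in\HH$.

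For the euclidean case, write $W_\G=W\ltimes Q$, where $W$ is the associated finite Weyl group and $Q$ is the translation lattice (the coroot lattice). Van der Lek's theorem extends Brieskorn--Deligne to the affine setting, giving $A_\G\cong\pi_1((V\setminus\mathcal{A}_\G^{\mathrm{aff}})/W_\G)$, with $\mathcal{A}_\G^{\mathrm{aff}}$ the (locally finite) arrangement of affine reflecting hyperplanes. The plan is to resolve the quotient in two stages: first, $V/Q$ is an algebraic torus $T\cong(\C^*)^n$; then, by Steinberg's theorem on adjoint quotients of simply-connected simple algebraic groups, $T/W\cong\C^n$. The image of $\mathcal{A}_\G^{\mathrm{aff}}$ in $\C^n$ is an algebraic hypersurface $\widetilde\Delta$ (the affine discriminant), whence $A_\G\cong\pi_1(\C^n\setminus\widetilde\Delta)$, and the same projective completion trick places $A_\G$ in $\HH$.

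The principal obstacle is the euclidean case: one must identify $V/W_\G$ explicitly as a smooth rational affine variety and verify that the image of the infinite arrangement $\mathcal{A}_\G^{\mathrm{aff}}$ is a reduced algebraic hypersurface. Whereas the spherical case is a direct application of classical invariant theory, the euclidean case requires the two-stage quotient analysis, Steinberg's structure theorem, and van der Lek's $K(\pi,1)$-type statement in infinite type, which is considerably deeper than Brieskorn--Deligne. In both settings, however, the final projective completion step is uniform: once we land in $\C^n$ with a hypersurface removed, taking closures inside $\P^n$ and adjoining $H_\infty$ is automatic.
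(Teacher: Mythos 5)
Your proposal follows essentially the same path as the paper's proof: in the spherical case, invoke Brieskorn together with Chevalley's invariant theory to identify $A_\G$ as the fundamental group of a discriminant complement in $\C^n$; in the euclidean case, pass through the two-stage quotient $V\to T=V/Q\to T/W\cong\C^n$ and appeal to the computation of $\pi_1$ of the regular orbit space for the affine Weyl group. The only variations are in attribution (the paper cites Bourbaki's exponential invariant theory for $T/W\cong\C^n$ where you cite Steinberg, and cites Nguyen V.~D. for the affine $\pi_1$ identification where you cite van der Lek's thesis — all correct sources for the same facts), plus your explicit mention of the projective completion step to land literally in $\P^n\setminus(\overline{\Delta}\cup H_\infty)$, which the paper leaves implicit since a complement of a hypersurface in $\C^n$ is automatically a hypersurface complement in $\P^n$.
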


\begin{proof}
It is enough to prove it for irreducible Artin-Tits groups.
We first deal with the spherical case. 

We will follow E.~Brieskorn~\cite{Br}.
Let $W$ be a finite Coxeter group. Realize $W$ as a reflection group in 
$\GL(V_{\R})$, for a real vector space $V_{\R}$ of dimension $n$. 
Let $V=V_{\R}\otimes\C$ be its complexification.
We have a proper action of $W$ on $V$ with quotient map $\pi:V\to V/W$ a branched
cover with branch locus $\D$, the hypersurface defined by the union of all reflecting 
hyperplanes of $W$ (we abuse the notation by using the same symbol for a hypersurface 
and a defining polynomial). The action of $W$ is free on $V\setminus \D$, and the 
image of the $W$-invariant polynomial $D^2$ is the discriminant locus $\D=\pi(D^2)$ 
of $\pi$. 

Thus the restriction $\pi:V\setminus \D\to V/W\setminus D$ is an unbranched
cover with group $W$. In fact, $V/W$ is isomorphic to the complex affine space $\C^n$
as provided by the Chevalley's classical invariant theory result: $\C[V]^W$ is a 
polynomial algebra $\C[f_1,\dots,f_n]$, and the isomorphism $V/W\to\C^n$ is given by 
$[v]\to(f_1(v),\dots,f_n(v))$. The fundamental group $\pi_1(\C^n\setminus D)$ of the 
discriminant complement is nothing but the Artin-Tits group $A_{W}$ associated to $W$.

We now consider the eculidean case. We will follow N.~Bourbaki~\cite{Bo}.
Let $R$ be an irreducible root system of rank $n$ in a real vector space $V_{\R}$,
with root lattice $Q$ and Weyl group group $W$. The affine Weyl group 
$\widetilde{W}=Q\rtimes W$ acts on $V=V_{\R}\otimes\C$ by affine reflections.  
Let $T$ for be complex torus with rational character lattice the weight lattice $P$ of $R$. 
For $\l\in P$ write $e^{\l}$ for the corresponding character in $T$. The exponential map 
$e:V\to T$ determines a short exact sequence $0\to Q\to V\to T\to 1$ that
provides an identification of the orbit spaces $V/\widetilde{W}=T/W$.
Classical exponential invariant theory tells us that the invariants $\C[T]^W$ of the 
algebra $\C[T]$ of Fourier polynomials on $T$, under the natural $W$-action is a 
polynomial algebra $\C[z_1,\dots,z_n]$. Hence the varieties $V/\widetilde{W}=T/W$
are in fact isomorphic to the affine space $\C^n$. If $\l_1,\dots,\l_n$ are
the fundamental weights in $P$ then we may take $z_j=\sum_{\mu\in W\cdot\l_j}e^{\mu}$,
where $W\cdot\l$ is the $W$-orbit of $\l$. In this way we obtain a branched
cover $\tilde\pi:T\to T/W$ with branch locus the Weyl denominator 
$\D=\prod_{\a\in R_{>0}}(e^{\a/2}-e^{-\a/2})$ and discriminant locus 
$D=\tilde\pi(\D^2)$. In this way we obtain an unbranched cover 
$V\setminus\widetilde{\D}\to V/\widetilde{W}\setminus \widetilde{D}=T/W\setminus D$,
where $\widetilde{\D}$ is the union of all reflecting hyperplanes of $\widetilde{W}$.
Recall that the affine Weyl groups are precisely the euclidean Coxeter groups.
Now according to~\cite{Du} the euclidean Artin-Tits group $A_{\widetilde{W}}$ is 
isomorphic to the the fundamental group 
$\pi_1(V/\widetilde{W}\setminus \widetilde{D})=\pi_1(T/W\setminus D)=\pi_1(\C^n\setminus D)$.
\end{proof}

The procedures described in the proof of Theorem~\ref{thm:deftype1} lead to concrete
equations for the hypersufaces whose complement have the Artin-Tits groups as fundamental 
groups. This has been carried out by E.M.~Opdam~\cite{Op} in the euclidean rank $2$ case. 
We will just give the results here.

\begin{example} 
\label{ex-333}
The type $\widetilde{\aA}_2$ corresponds to the Artin-Tits group $A(3,3,3)$.
This is the complement in $\C^2$ of the quartic 
$D=\{z_1^2z_2^2-4z_1^3-4z_2^3+18z_1z_2-27=0\}$. This curve is the tricuspidal quartic
where the line at infinity is the bitangent line.
\end{example}

\begin{example} 
The type $\widetilde{\C}_2$ corresponds to the Artin-Tits group $A(4,4,2)$.
This is the complement in $\C^2$ of the reducible quartic 
$D=\{(z_1^2-4z_2)(2z_1+z_2+4)(-2z_1+z_2+4)=0\}$
having $1$ node and $2$ tacnodes as singularities: a parabola with two tangent lines intersecting outside the conic.
\end{example}

\begin{example} 
The type $\widetilde{\gG}_2$ corresponds to the Artin-Tits group $A(6,3,2)$.
This is the complement in $\C^2$ of the reducible quintic 
$D=\{(z_1^2-4z_2)(-4z_1^3+z_2^2+12z_1z_2+24z_2+36z_1+36)=0\}$; this is
a cuspidal cubic (where the line at infinity is the tangent at the inflection point)
and a parabola (with the same point at infinity) intersecting at two smooth points,
with intersection multiplicities~$1$ and~$3$. The curve $D$ has singularities of type $\aA_1$, $\aA_2$, and $\aA_5$.
\end{example}

\begin{example}
\label{ex-spherical}
The spherical Artin triangle groups $A(p,q,r)$, $\frac{1}{p}+\frac{1}{q}+\frac{1}{r}>1$ can also be obtained in
an easy way by curves having singularities of type $\aA_{p+1},\aA_{q+1},\aA_{r+1}$ as follows:
\begin{description}
\item[$A(n,2,2)$] The curve of equation $(y^2-x^n)(x-1)=0$.
\item[$A(3,3,2)$] The curve of equation $8 y^3+3 y^2-6 x^2 y-x^2 (2+x^2)=0$. 
The curve has two ordinary cusps $\aA_2$ and one ordinary node $\aA_1$; there is only one branch at infinity,
which is a non-ordinary inflection point.
\item[$A(4,3,2)$] The curve of equation $(y^2-x^3)(2 y-3 x+1)=0$ the line being an ordinary tangent to the cubic.
\item[$A(5,3,2)$] The curve of equation $3125 y^3+16(16-125 x) y^2-4 x^2 (32-225 x) y+4 x^4 (4- 27 x)=0$.
It is a rational affine curve with three singular points of type $\aA_1$, $\aA_2$, and $\aA_4$. 
There is only one place at infinity; at this place the projectivized curve has a singular point of type $\aA_4$ such that
the tangent line (the line at infinity) has intersection number~$5$.
\end{description}
\end{example}

\begin{rem}
All the curves presented in the examples above have a common structure. In suitable coordinates
they can be presented as in Figure~\ref{fig-curva}, where the \emph{vertices} are singular double points. Note that for odd indices
the picture should continue till infinity. These curves are defined by a braid monodromy; if we choose a vertical line
close to the leftmost singular points, it is defined by four braids in four strings: $\sigma_1^{p+1}\cdot\sigma_3^{q+1}$, $\sigma_2$ and $\sigma_1^{-1}\cdot\sigma_3^{-1}\cdot\sigma_2^{r+1}\cdot\sigma_1\cdot\sigma_3$.
\begin{figure}
\centering
\includegraphics[scale=.5]{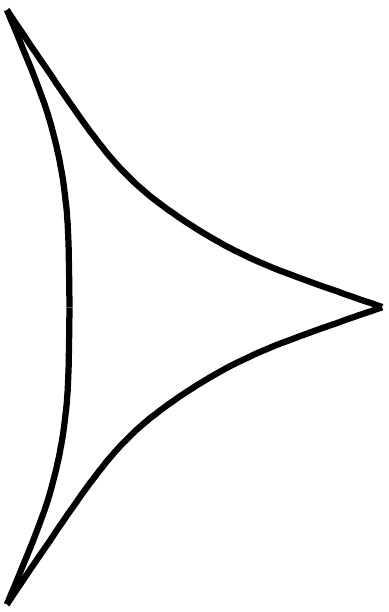}
\begin{picture}(0,0)
\put(0,39){$\aA_r$}
\put(-74,83){$\aA_p$}
\put(-74,0){$\aA_r$}
\end{picture}
\caption{Triangle curve}
\label{fig-curva}
\end{figure}
Following ideas of L.~Rudolph~\cite{Ru2}, S.~Orevkok~\cite{Or} proved that there exists 
an analytic curve in $\D\times\C$ ($\Delta$ a disk) realizing such a braid monodromy. 
For the elliptic and euclidean case the above Examples show that these curves can be 
extended to algebraic curves in $\C^2$ such that the complements in $\D\times\C$ and 
$\C^2$. One can construct for small values of $p,q,r$ (in the hyperbolic case) 
algebraic curves realizing this braid monodromy over the disk, but the monodromy of the 
curves is not trivial outside the disk. In fact, any attempt to construct hyperbolic 
triangle curves in $\C^2$ violates the Riemann-Hurwitz principle. On the other side, using 
ideas of L.~Rudolph in~\cite{Ru1}, any hyperbolic triangle curve is the fundamental 
group of a Stein surface properly embedded in $\C^4$.
\end{rem}

Now we turn to the question of projectivity of the Artin groups.

\begin{thm}
\label{thm:deftype2}
All spherical and euclidean Artin-Tits groups $A_{\G}$ are not projective.
\end{thm}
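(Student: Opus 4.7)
By Theorem~\ref{thm:deftype1} each spherical or euclidean $A_\Gamma$ is already known to be quasi-projective, so one only has to rule out projectivity. The plan is to reduce to $\Gamma$ irreducible and then split by the parity of $b_1(A_\Gamma)$. If $\Gamma$ has more than one Coxeter-irreducible component, then $A_\Gamma$ admits a non-trivial free-product decomposition, and Gromov's obstruction $(\PP4)$ of Section~\ref{sec-qproj} gives $A_\Gamma\notin\PP$; from now on assume $\Gamma$ corresponds to one of the irreducible spherical types $A_n,B_n,D_n,E_{6,7,8},F_4,G_2,H_{3,4},I_2(m)$ or to one of the irreducible euclidean types $\tilde{A}_n,\tilde{B}_n,\tilde{C}_n,\tilde{D}_n,\tilde{E}_{6,7,8},\tilde{F}_4,\tilde{G}_2$.

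I would next compute $b_1(A_\Gamma)$. An Artin relation of odd length $\ell(e)$ abelianizes to $u=v$, while one of even length is trivial in the abelianization; hence $b_1(A_\Gamma)$ equals the number of connected components of the subgraph $\Gamma_{\mathrm{odd}}\subseteq\Gamma$ of odd-labeled edges. This yields $b_1=1$ for $A_n,D_n,E_{6,7,8},H_{3,4},I_2(2k+1)$ in the spherical list and for $\tilde{A}_n\,(n\ge2),\tilde{D}_n,\tilde{E}_{6,7,8}$ in the euclidean list, and $b_1=3$ for $\tilde{C}_n$; in all these cases projectivity is ruled out by the parity constraint $b_1\in 2\Z$ for K\"ahler groups recorded in Section~\ref{sec-qproj}. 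The two rank-one degenerate cases $A_{A_1}=\Z$ and $A_{\tilde{A}_1}=\F_2$ have more than one end and are excluded by the end-count obstruction $(\PP5)$.

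It remains to handle the irreducible types with $b_1$ even, namely $B_n,F_4,G_2=I_2(6),I_2(2k)$ (spherical) and $\tilde{B}_n,\tilde{F}_4,\tilde{G}_2$ (euclidean). A direct Fox-calculus computation on the dihedral relator $(ab)^k=(ba)^k$ yields
\[
\Delta_{A_{I_2(2k)}}(s,t)=1+st+\cdots+(st)^{k-1},
\]
a non-constant Laurent polynomial in the single essential variable $u=st$, so Theorem~\ref{thm-suciu} forces $A_{I_2(2k)}\notin\PP$ for $k\ge2$. For the other types the Alexander polynomial typically involves more than one essential variable, so Theorem~\ref{thm-suciu} does not apply directly; instead I would compute $\VV_1(A_\Gamma)$ from the Artin presentation and show that each even-labeled edge of label $2k\ge 4$ contributes a positive-dimensional component of $\VV_1(A_\Gamma)$ translated by a primitive $2k$-th root of unity. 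Theorem~\ref{thm-orb} then realizes each such component as an orbifold pencil $\C^n\setminus D\to C_\varphi$ on the hyperplane-arrangement model of Theorem~\ref{thm:deftype1}, and the isotropy and intersection obstructions of Propositions~\ref{prop-obs1}--\ref{prop-obs2} rule out a compatible smooth projective model for $A_\Gamma$. The main obstacle is a uniform, case-free organization of this characteristic-variety analysis, particularly for the affine $\tilde{B}_n$ series of unbounded rank.
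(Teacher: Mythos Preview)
Your reduction to the irreducible case is flawed: if $\Gamma$ has several Coxeter--irreducible components then $A_\Gamma$ is a \emph{direct} product of the corresponding Artin groups, not a free product (cf.\ the join construction $A_{\Gamma*\Gamma'}=A_\Gamma\times A_{\Gamma'}$ in Section~\ref{sec-artin}), so Gromov's obstruction $(\PP4)$ does not apply. The paper instead argues that each irreducible spherical or euclidean Artin group has infinitely many ends, and then the reducible case follows from $(\PP5)$: the product surjects onto one of its factors with finitely generated kernel.

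The more serious gap is your treatment of the irreducible types with $b_1=2$, namely $B_n$, $F_4$, $G_2$, $I_2(2k)$, $\tilde B_n$, $\tilde F_4$, $\tilde G_2$. Your argument for $I_2(2k)$ via Theorem~\ref{thm-suciu} is fine, but for the remaining types you invoke Propositions~\ref{prop-obs1}--\ref{prop-obs2}. Those are necessary conditions for \emph{quasi}-projectivity, and by Theorem~\ref{thm:deftype1} the groups in question are quasi-projective, so those conditions are automatically satisfied and can never rule out projectivity. You need an obstruction that separates $\PP$ from $\QP$. The paper uses two such devices: for the sporadic types $F_4$, $G_2$, $\tilde F_4$, $\tilde G_2$ (and also $I_2(2k)$) it passes to an explicit finite-index subgroup with odd $b_1$ and appeals to $(\PP7)$; for the infinite families $B_n$ and $\tilde B_n$ it uses the ends obstruction $(\PP5)$, via the surjection $A(B_n)\twoheadrightarrow A(A_{n-1})$ with free kernel \cite{Cr,CP} and Allcock's realization of $A(\tilde B_n)$ as an index-$2$ subgroup of an orbifold braid group~\cite{Al}. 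Your proposed characteristic-variety route gives no handle on $B_n$ or $\tilde B_n$ as stated.
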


\begin{proof}
The abelianization of $A_{\G}$ is calculated for example in~\cite{De}.
Following~\cite[\S11.9]{DPS0}, let $\G_\text{\rm odd}$ be the graph obtained from $\G$ 
by removing all even labeled edges. Then $A_{\G}/A_{\G}'$ is isomorphic to $\Z^m$, 
where $m$ is the number of connected components of $\G_\text{\rm odd}$. 
Thus, if $\G_\text{\rm odd}$ is connected the abelianization of $A_{\G}$ is $\Z$.

Recall that the simplest necessary condition for group to be projective is 
to have even first Betti number. If we go over the list of irreducible 
spherical and euclidean Artin groups $A_{\G}$, we quickly discover that: 
$b_1(A_{\G})=1$ for the types $\aA_n, \dD_n, \E_k, \hH_l, \I_2(m)$, with $m$ odd,
$\widetilde{\aA}_n, \widetilde{\dD}_n, \widetilde{\E}_k$, $b_1(A_{\G})=1$ for
type $\widetilde{\C}_n$, and $b_1(A_{\G})=2$ for the rest of the cases.  

Now, for the types with small number of nodes $\F_4, \gG_2, \widetilde{\F}_4,\widetilde{\gG}_2$ 
and $\I_2(m)$, with $m$ even, one can easily exhibit low index subgroups of $A_{\G}$
that have and odd first Betti number. Although the types $\bB_n$ and $\widetilde{\bB}_n$
can also be dealt with that way, we take here a different road.

We are going to use an obstruction to projectivity (in fact to \emph{K{\"a}hler-ness}) obtained 
in~\cite{ABR}, following ideas of M.~Gromov in~\cite{Gr}, see also~\cite{ABCKT} for more background.
More precisely, if a group $G$ fits into an extension $1\to K\to G \to Q \to 1$ such that $K$ 
is finitely generated and $Q$ has an infinite number of ends $e(Q)$, then $G$ cannot be K{\"a}hler, 
hence projective. In fact, the group $G$ has also $e(G)=\infty$, which is not possible for a 
projective group (cf.~\cite{ABR} using the results in~\cite{Gr}). In~\cite{Ar2} this fact is used 
to show that pure braid groups $P_n, n\ge 3$ are not projective. Indeed we have a surjection 
$P_n\to P_3$ with finitely generated kernel, and $e(P_3)=\infty$ as $P_3=\F_2\times\Z$. 
We will follow the same line of reasoning.

First, it is known that the Artin group $A(B_n)$ surjects onto $A(A_{n-1})$ with kernel a 
free group of rank $n$, see~\cite{Cr, CP}. Then Arapura's discussion in~\cite{Ar2} gives 
that the braid group $A(A_{n-1})$ has infinitely many ends.

Secondly, it is proved by D.~Allcock~\cite{Al} that $\widetilde{\bB}_n, n\ge 3$ is an 
index $2$ subgroup of the braid group $B_n(\Sigma)$ of a $2$-dimensional orbifold
$\Sigma$, so it is enough to show that this group has infinitely many ends.  
The orbifold  $\C^*_\varphi$ where $\varphi(\frac{1}{2})=2$ and it is the only point with value
different from~$1$. 
Its associated pure braid group $P_n(\Sigma)$ is the kernel of the canonical 
surjection $B_n(\Sigma)\to S_n$ onto the symmetric group. As with the ordinary
pure braid groups we have a surjection $P_n(\Sigma)\to P_3(\Sigma)$ with
finitely generated kernel. It is readily seen that $P_3(\Sigma)$ has 
infinitely many ends.

In fact, the general irreducible case can be treated as above by showing that they have an 
infinite number of ends. This way one shows the result for the non-irreducible case, since
the product of two groups with infinite number of ends also has an infinite number of ends.
\end{proof}

\begin{conj} 
\label{conj:gentype}
All general type Artin groups $A_{\G}$ are not quasi-projective.
\end{conj}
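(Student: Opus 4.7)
The plan is to combine the obstructions developed in Sections~\ref{sec-charvar} and~\ref{sec-alex}---Theorem~\ref{thm-donu-nero} (Arapura--Budur), Propositions~\ref{prop-obs1} and~\ref{prop-obs2}, and Theorem~\ref{thm-suciu} on the Alexander polynomial---with explicit Fox-calculus computations on the Artin-Tits presentation. The guiding principle is that for a general type graph $\G$, enough ``hyperbolic bending'' persists in the Alexander invariant of $A_\G$ so that either the irreducible components of $\Char_1(A_\G)$ fail to be finite unions of torsion-translated subtori compatible with Proposition~\ref{prop-obs1}\ref{prop-obs1-2}, or the multivariable Alexander polynomial $\Delta_{A_\G}$ fails the single-essential-variable constraint of Theorem~\ref{thm-suciu}\ref{thm-suciu1}.

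\textbf{Triangle reduction.} First I would settle the conjecture for triangle groups $A(p,q,r)$ with $\frac{1}{p}+\frac{1}{q}+\frac{1}{r}<1$. Use the three-generator, three-relator Artin presentation to compute $M_{\C,A(p,q,r)}$ via the recipe of Remark~\ref{rem-fitting}, organizing the case split by the $\G_\text{\rm odd}$ decomposition from the proof of Theorem~\ref{thm:deftype2}, which yields $b_1\in\{1,2,3\}$ according to the parity pattern of $(p,q,r)$. When $b_1=1$, the Alexander polynomial factors through one-variable terms coming from the braid relations---essentially products built from $1+t+t^2+\cdots+t^{p-1}$ and its analogues---and for hyperbolic $(p,q,r)$ I expect non-cyclotomic factors to emerge, violating Theorem~\ref{thm-suciu}\ref{thm-suciu2}(b); Example~\ref{ex-noqp} is the template. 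When $b_1\geq 2$, the expectation is that several positive-dimensional components appear in $\Char_1$, with distinct shadows meeting at a non-torsion point (ruling out quasi-projectivity by Proposition~\ref{prop-obs1}\ref{prop-obs1-2}), or with essential variable count exceeding one when $b_1=3$ (contradicting Theorem~\ref{thm-suciu}\ref{thm-suciu1}). Theorem~\ref{thm-orb} would then be invoked to rule out the possibility of attaching these components to an orbifold pencil via $\pi_1^{\rm\text{orb}}(C_\varphi)$.

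\textbf{Extension and main obstacle.} The general case is harder because not every general type graph contains a hyperbolic triangle as a full subgraph. The natural reduction tool is the standard parabolic retraction $A_\G \twoheadrightarrow A_{\G'}$ for $\G'$ a full subgraph, which induces an injection $\T_{A_{\G'}}\hookrightarrow \T_{A_\G}$ along which positive-dimensional components of $\Char_k(A_{\G'})$ pull back to components of $\Char_k(A_\G)$ inheriting their shadow and torsion structure. The hard part will be twofold: \emph{(i)}~identifying a finite list of minimal ``forbidden'' full subgraphs of general type---hyperbolic triangles together with suitable higher-rank diagrams---such that every general type $\G$ contains one of them; and \emph{(ii)}~ensuring that the extra components introduced by the ambient generators do not push the forbidden components onto $\one$, where Proposition~\ref{prop-obs2}\ref{prop-obs2-1} weakens the obstruction, or accidentally restore toric compatibility. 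A Coxeter-diagram case analysis in parallel with that of Theorem~\ref{thm:deftype2}, combined with a Theorem~\ref{thm-orb}-style orbifold pencil analysis to exclude realizing surjections onto orbifold curves, seems unavoidable, and securing uniform control across all general type diagrams is the principal difficulty.
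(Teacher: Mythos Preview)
The statement you are attempting to prove is labeled a \emph{Conjecture} in the paper and is \emph{not} proved there. The paper only establishes partial results toward it: Proposition~\ref{prop-tree} (even trees), Proposition~\ref{prop-cycle} (strictly even cycles), Theorem~\ref{thm:evnc} (strictly even non-complete graphs), Theorem~\ref{thm-triangle} (most even hyperbolic triangles), and scattered examples such as~\ref{ex-666},~\ref{ex-2edges},~\ref{ex-542}. So there is no ``paper's own proof'' to compare against; the question is whether your outline plausibly closes the gap, and it does not.

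\textbf{Concrete gaps.} First, your $b_1=1$ triangle argument rests on the expectation that ``non-cyclotomic factors emerge'' in $\Delta_{A(p,q,r)}$. This is false: Fox calculus on the Artin relation $\underbrace{uvu\dots}_{m}=\underbrace{vuv\dots}_{m}$ produces only factors of the shape $\frac{t^k\pm 1}{t\pm 1}$ (cf.\ the $\Delta_{\ell(e)}$ in Proposition~\ref{prop:e1ag}), all of which are products of cyclotomic polynomials. Theorem~\ref{thm-suciu}\ref{thm-suciu2} therefore gives no obstruction along this line; Example~\ref{ex-noqp} is not a template, since its relator is not an Artin relator. The paper's actual method for odd-label triangles such as $A(5,4,2)$ and $A(5,5,4)$ (Example~\ref{ex-542}) is to pass to a carefully chosen finite-index subgroup and test \emph{its} characteristic varieties against Proposition~\ref{prop-suma}---an ad hoc step your outline does not supply.

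Second, your extension step via parabolic retractions $A_\G\twoheadrightarrow A_{\G'}$ does give an embedding $\T_{A_{\G'}}\hookrightarrow\T_{A_\G}$, but the image of an irreducible component of $\Char_k(A_{\G'})$ need not be an irreducible \emph{component} of $\Char_k(A_\G)$: it can be absorbed into a larger subtorus coming from the ambient relations, at which point the shadow/intersection obstructions of Proposition~\ref{prop-obs1} evaporate. You flag this as difficulty~(ii) but offer no mechanism to prevent it. Third, the ``finite list of forbidden full subgraphs'' in~(i) is pure speculation; no such list is produced, and general type Coxeter diagrams are not classified in any way that makes this tractable. In short, your proposal is a reasonable wish-list of obstructions, but each of the three load-bearing steps is either demonstrably wrong (the cyclotomic claim) or left entirely open, which is why the paper states the assertion as a conjecture rather than a theorem.
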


This was already known for right-angled Artin groups (cf.~\cite{DPS0}).

\begin{definition}
A graph $\G=(V,E,\ell)$ is called \emph{even} if the labeling
$\ell$ takes only even values. In that case the Artin group 
$A_{\G}$ is called of \emph{even type}.
Suppose $\G=(V,E,\ell)$ is an arbitrary graph. Then $\G_{ev}$
is the graph obtained from $\G$ by identifying the endpoints
of each odd labeled edge and then removing the loops.
\end{definition}

\begin{rem}
Note that $\G_{ev}$ is not the same as the odd contraction of $\G$
considered in~\cite[\S11.9]{DPS0} since the even-labeled edges are
kept in $\G_{ev}$.

Since Theorem~\ref{thm-ra} solves the problem for right-angled groups,
we say that an Artin group is of \emph{strictly even type} if it is of 
even type and not right-angled.
\end{rem}

\begin{rem}
\label{rem:block}
Note that if $\G$ has connected components $\G_1,\dots,\G_r$
then the Alexander matrix $M_{\G}$ of $A_{\G}$ can be seen to be, 
for a suitable ordering of the vertices of $\G$, a block matrix 
with blocks $M_{\G_1},\dots,M_{\G_r}$.
\end{rem}

In~\cite[Theorem 9.9]{DPS0} it is proved that a free product $G_1*G_2$ 
of two $1$-formal groups $G_i$ with $b_1(G_i)>0, i=1,2$ and presented
by commutator relators is in $\QP$ if and only if both $G_i$ are free.
We apply this result to the case of even Artin groups.

\begin{prop} 
A free product $A_{\G_1}*A_{\G_2}$ of even Artin groups is not 
quasi-projective unless it is free. 
\end{prop}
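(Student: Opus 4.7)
The plan is to reduce to the cited \cite[Theorem~9.9]{DPS0} recalled just above the statement: a free product $G_1*G_2$ of two $1$-formal groups $G_i$ with $b_1(G_i)>0$ that are presented by commutator relators is quasi-projective if and only if both $G_i$ are free. I would verify the three hypotheses for the even Artin factors $A_{\G_i}$ and then unpack the resulting conclusion.

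For the commutator-relator presentation, note that on an even-labeled edge $\{u,v\}$ with $\ell(\{u,v\})=2m$ the two identities $u(vu)^{m-1}\cdot v=(uv)^m$ and $v\cdot u(vu)^{m-1}=(vu)^m$ show that the defining Artin relation $(uv)^m=(vu)^m$ is equivalent to the single commutator relation
\[
[u(vu)^{m-1},\,v]=1.
\]
Rewriting every edge relation in this form exhibits $A_{\G_i}$ as presented by commutator relators. The Betti number hypothesis is then automatic: commutator relators abelianize to zero, so $H_1(A_{\G_i};\Z)\cong\Z^{|V_i|}$ and $b_1(A_{\G_i})=|V_i|>0$ whenever $V_i$ is non-empty.

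The remaining ingredient, and the main obstacle, is the $1$-formality of $A_{\G_i}$. Here one observes that linearizing the commutator relators $[u(vu)^{m-1},v]$ in weight~$2$ yields $m[u,v]=0$, so that the quadratic part of the holonomy Lie algebra of $A_{\G_i}$ coincides with the holonomy Lie algebra of the right-angled Artin group on the same graph with all labels set to~$2$. Proving $1$-formality amounts to identifying the Malcev Lie algebra of $A_{\G_i}$ with the completion of this quadratic Lie algebra; this is the delicate point, and the step that requires genuine work rather than bookkeeping.

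Granting these three inputs, \cite[Theorem~9.9]{DPS0} forces each $A_{\G_i}$ to be free as soon as $A_{\G_1}*A_{\G_2}\in\QP$. Since an even Artin group $A_{\G}$ is free precisely when its underlying graph carries no edges, in which case $A_{\G}=\F_{|V|}$, the free product is itself the free group $\F_{|V_1|+|V_2|}$, concluding the proof.
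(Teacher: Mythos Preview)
Your approach is exactly the paper's: reduce to \cite[Theorem~9.9]{DPS0}. The paper does not spell out the verification of the hypotheses at all---it simply writes ``We apply this result to the case of even Artin groups'' and states the proposition---so your explicit check that the even Artin relation $(uv)^m=(vu)^m$ rewrites as the commutator $[u(vu)^{m-1},v]=1$, and that consequently $b_1(A_{\G_i})=|V_i|>0$, already goes beyond what the paper records.

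On the $1$-formality hypothesis you flag as ``the delicate point'': the paper does not prove it either, and treats it as known input from \cite{DPS0}, whose \S11 analyzes Artin groups in detail. So you have not introduced a gap relative to the paper; you have merely been explicit about which hypothesis carries the weight. Your sketch of why the holonomy Lie algebra of an even Artin group agrees with that of the underlying right-angled Artin group is correct as far as it goes, but as you note, identifying the Malcev completion with its associated graded is a separate matter, and the paper simply defers that to the cited source.
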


Thus if we focus on the even graphs $\G$, we only need to consider 
quasi-projectivity of $A_{\G}$ for connected $\G$.

\begin{prop} 
\label{prop:e1ag}
For a connected graph $\G=(V,E,\ell)$ let $n_v$ be the valence of the
vertex~$v$. Then the Fitting ideal $F_1:=F_1(\tilde M_{A_{\G}})$ is of the following form:
\[
F_1=I^{\e}\cdot\left(\prod_{v\in T_{ev}}(t_v-1)^{n_v-1}\cdot
\prod_{e\in T}\D_{\ell(e)}(t_e),T\text{ spanning tree of }\G\right),
\]
where:
\begin{enumerate}
\enet{\rm(\arabic{enumi})}
\item $I$ is the augmentation ideal,
\item $\e=1$ if there exists an even spanning tree for $\G$, 
otherwise $\e=0$,
\item $t_e =t_u t_v$ if $\ell(e)$ is even, otherwise $t_e=t_u=t_v$,
\item $\D_{2 k}(t):=\dfrac{t^{k}-1}{t-1}$ and $\D_{2 k+1}(t):=\dfrac{t^{2k +1}+1}{t+1}$.
\end{enumerate}
\end{prop}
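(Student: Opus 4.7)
The plan is to write down an explicit presentation of the Alexander module $\tilde M_{A_\G}$ from the standard Artin-Tits presentation and to compute the maximal minors of the Fox-Jacobian matrix by a matrix-tree-theorem type argument.

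First I would compute, by a direct Fox-calculus calculation, the abelianized derivatives of each Artin-Tits relator. For an edge $e=\{u,v\}\in E$ with label $\ell(e)=m$, the relator $R_e$ equating $\underbrace{uvu\cdots}_{m}$ with $\underbrace{vuv\cdots}_{m}$ has abelianized Fox derivatives supported only on the rows $u$ and $v$ of the column of $\Mat(\tilde\delta_2)$ attached to $e$. One checks that when $m=2k+1$ is odd the abelianization identifies $t_u=t_v=t_e$ and the two nonzero entries are $\pm\D_{2k+1}(t_e)$, whereas when $m=2k$ is even they are $-\D_{2k}(t_u t_v)(t_v-1)$ at row $u$ and $\D_{2k}(t_u t_v)(t_u-1)$ at row $v$. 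Consequently $\Mat(\tilde\delta_2)$ takes the shape of a weighted signed incidence matrix of $\G$.

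Next I would use that $F_1$ is generated by the $(|V|-1)\times(|V|-1)$ minors obtained by choosing $|V|-1$ columns (a subset $E'\subset E$) and deleting one row $v_0\in V$. Because each column is supported on the two endpoints of the edge it represents, if $E'$ fails to span $V$ then some vertex is incident to no edge of $E'$, so a zero row appears in the column submatrix and the minor vanishes. Hence only pairs $(T,v_0)$ with $T$ a spanning tree of $\G$ contribute. Given such a pair, I would root $T$ at $v_0$, label the non-root vertices in a pre-order from $v_0$ so that each vertex follows its parent, and assign to every tree edge $e(v)=\{v,p(v)\}$ the column position of its child endpoint $v$. With this ordering the square submatrix is triangular, since column $e(v)$ has its two nonzero entries at rows $v$ (diagonal) and $p(v)$ (strictly above). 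The minor equals the product of its diagonal entries, which by the first step becomes
\[
\pm\prod_{e\in T}\D_{\ell(e)}(t_e)\cdot\prod_{\substack{v\ne v_0\\ e(v)\text{ even}}}(t_{p(v)}-1).
\]

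Finally I would vary $v_0$ for each fixed $T$ and combine over all spanning trees. For an even edge $e=\{u,w\}$ of $T$, as $v_0$ moves between the two components of $T\setminus\{e\}$ the factor $(t_{p(v)}-1)$ toggles between $(t_u-1)$ and $(t_w-1)$; tracking these toggles identifies the ideal produced by a single $T$ with the product $\prod_{e\in T}\D_{\ell(e)}(t_e)\cdot\prod_{v\in T_{ev}}(t_v-1)^{n_v-1}$, possibly multiplied by the augmentation ideal $I$. Summing these ideals over all spanning trees would then yield the proposed formula, with the factor $I^\e$ appearing precisely when $\G$ admits an even spanning tree.

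The hard part will be the last combinatorial bookkeeping, where one must reconcile two distinct ``even parts'' of a spanning tree $T$: the contracted tree $T/E_{\text{odd}}(T)$ that underlies the tree-ordering argument, and the odd-component quotient $V^*=\pi_0(\G_{\text{odd}})$ that controls the variables of $\Lambda$. The subtle case occurs when an even edge of $T$ joins two vertices lying in the same odd-component of $\G$ through odd edges outside of $T$: no rooting toggle happens and the corresponding $(t-1)$ is rigid rather than producing an augmentation ideal factor. A case analysis on the interaction of $T$ with $\G_{\text{odd}}$ should then show that these rigid factors assemble into the exponents $n_v-1$ at the vertices $v\in T_{ev}$ and that the global augmentation prefactor $I^\e$ appears exactly as claimed.
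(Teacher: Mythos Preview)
Your approach is essentially the same as the paper's: there too the Alexander matrix is recognized as a weighted incidence-type matrix of $\G$, only spanning-tree subsets of rows are seen to give nonzero maximal minors, and each tree minor is computed by Laplace expansion---phrased as a double induction on $|V|$ and $|E|-|V|+1$ rather than as your rooting-and-triangularizing argument, but these unravel to the same product formula. The paper's treatment of the final combinatorial bookkeeping you flag as the hard part (assembling the $(t_v-1)^{n_v-1}$ factors and the $I^{\e}$ prefactor) is likewise brief, being subsumed into the phrase ``a pleasant exercise.''
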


\begin{proof}
The proof is a pleasant exercise. 
The Alexander matrix $M=M_{\G}$ of $A_{\G}$ has $|E|$ rows and $|V|$ columns. 
Let $n=|V|$ and identify $V$ with $[n]$ in increasing order, and $E$ with 
a subset of $\binom{[n]}{2}$, ordered lexicographically, where $[n]:=\{1,\dots,n\}$. 
The entry of $M$
on row $jk$ and column $i$ is denoted as $m_{jk,i}$.
As $\G$ is connected, it has at least $n-1$ edges. Let $m=|E|-n+1$. 
The proof is carried out by double induction on $n\ge 1$ and $m\ge 0$. 

Suppose $m=0$. Thus $\G$ is a tree.
Then the Fitting ideal $F_1$ is generated by the
$n$~minors of $M_{\G}$ of codimension $1$. Let $M_i$ be the minor obtained
by removing the $i$-th column of $M$. Let $n_i$ be the valence
of the vertex $v=i$ and $ij_1,\dots,ij_{n_i}$ the edges at $i$.
Then the entry $m_{ij_p,i}$ is up to sign of the form 
$(t_i-1)^{((-1)^{\ell(ij_p)}+1)/2}\cdot\D_{ij_p}$. Moreover $m_{ij_p,i}$ 
is the only non-zero entry on the row $ij_p$ of the minor $M_i$.
Expanding $M_i$ along the rows $ij_p, \, 1\le p\le n_i$, and using 
Remark~\ref{rem:block}, we obtain
\[
M_i=\prod_{p=1,n_i}(t_i-1)^{((-1)^{\ell(ij_p)}+1)/2}\cdot\D_{ij_p}\cdot
\prod_{p=1,n_i} M_{j_p}(T_p),
\]
where $T_p$ is the sub-tree of $\G$ not containing vertex $i$
and edges $ij_1,\dots,ij_{n_i}$, and $M_{j_p}(T_p)$ is the codimension $1$
minor of the Alexander matrix of $A_{T_p}$ obtained by removing its
column corresponding to vertex $j_p$.

The last formula, combined with the induction hypothesis on $n$ 
finishes the proof for $m=0$. If $m\ge 1$, we proceed as follows.

Let $E'\subset E$ be a set of rows of $M$ of size $n-1$, and denote by 
$\G'$ the full subgraph of $\G$ on vertices $V$ and edges $E'$.
Let $M_{E',i}$ be the codim $1$ minor of $M$ obtained by removing
its $i$-th column and the rows in $E\setminus E'$.  
Then $M_{E',i}$ is the minor $M'_i$ of $M'=M(A_{\G'})$.
By Remark~\ref{rem:block}, $M'$ can be seen as a block matrix
with blocks $M_{\G'_1},\dots,M_{\G'_r}$, where $\G'_1,\dots,\G'_r$
are the connected components of $\G'$. If $r=1$ then $\G'$ is a 
spanning tree for $\G$, and we are done. If $r>1$ then $M'_i=0$.
\end{proof}

As an immediate corollary we have the following:

\begin{cor} 
For a connected tree $T$ the Alexander polynomial $\D(A_{\G})$ of $A_{\G}$
is given by:
\[
\D(A_{\G})=\prod_{v\in T_{ev}}(t_v-1)^{n_v-1}\cdot\prod_{e\in T}\D_{\ell(e)}(t_e).
\]
\end{cor}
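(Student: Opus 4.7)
The plan is to derive this from Proposition~\ref{prop:e1ag} by specializing to the case where $\G$ is a tree $T$, and then converting the Fitting-ideal formula into a statement about the Alexander polynomial.

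First I would observe that since $T$ is a tree on $n$ vertices, it has exactly $n - 1$ edges. Thus $T$ is its own unique spanning tree, and the ideal described in Proposition~\ref{prop:e1ag} is principal, generated by the single product
\[
f := \prod_{v \in T_{ev}} (t_v - 1)^{n_v - 1} \cdot \prod_{e \in T} \D_{\ell(e)}(t_e).
\]
Hence $F_1(\tilde M_{A_T}) = I^{\e} \cdot (f)$, where $I$ is the augmentation ideal.

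Next I would compute $\tilde \D(A_T) = \gcd F_1(\tilde M_{A_T})$. The key point is that the gcd of the generators $\{t_v - 1\}$ of $I$ is a unit whenever $\rank H_1(A_T; \Z) \geq 2$, since distinct $t_v - 1$ are pairwise coprime in the Laurent polynomial ring $\Q[A]$. This immediately yields $\tilde \D(A_T) = f$ (up to a unit) in the multivariable case, and equation~\eqref{alexfox} then gives $\D(A_T) = \tilde \D(A_T) = f$, as claimed.

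The remaining rank~$1$ case, where $T$ has only odd-labeled edges so that $T_{\text{odd}} = T$ is connected, requires a separate but parallel argument. Here all $t_v$ collapse to a single variable $t$, so $\gcd I = t - 1$ and the Fitting-ideal computation yields $\tilde \D(A_T) = (t - 1)^{\e} f$. Equation~\eqref{alexfox} asserts $\tilde \D = (t - 1)^k \D$ for some $k$, so the corollary reduces to verifying that $k = \e$. I expect this comparison to be the main technical obstacle: it requires tracking the augmentation exact sequence~\eqref{eq-aug} and pinning down precisely which power of $(t - 1)$ arises from the passage from $\tilde M_{A_T}$ to $M_{A_T}$ over a one-variable base ring. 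Once that bookkeeping is in place, the uniform formula $\D(A_T) = f$ follows in all cases.
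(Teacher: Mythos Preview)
Your argument is correct and matches the paper's approach: the paper presents this as an ``immediate corollary'' of Proposition~\ref{prop:e1ag} with no further proof, and your steps---unique spanning tree, hence principal ideal, hence the $\gcd$ equals $f$---are exactly what makes it immediate. Your worry about the rank-$1$ case is more scrupulous than the paper itself, which does not address it; note, however, that the only application of the corollary in the paper (Proposition~\ref{prop-tree}) is to even trees, where $b_1(A_\G)\ge 2$ automatically, so the passage through~\eqref{alexfox} causes no difficulty there.
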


\begin{prop}\label{prop-tree}
If $\G$ is an even tree with at least $3$ vertices, then $A_{\G}\not\in\QP$.
\end{prop}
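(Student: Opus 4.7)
The plan is to apply the essential-variable obstruction of Theorem~\ref{thm-suciu}\ref{thm-suciu1} to the Alexander polynomial formula from the preceding Corollary. Since $\G$ is even, the subgraph $\G_{\text{odd}}$ is totally disconnected, so by the abelianization computation recalled in the proof of Theorem~\ref{thm:deftype2} we have $b_1(A_\G)=|V(\G)|\geq 3$ and in particular $b_1\neq 2$. Applying the Corollary to the even tree $T=\G$ and using that $t_e=t_ut_v$ for every edge $e=\{u,v\}$, we obtain
\[
\D:=\D(A_\G)\;=\;\prod_{v\in V}(t_v-1)^{n_v-1}\cdot\prod_{e=\{u,v\}\in E}\D_{\ell(e)}(t_ut_v),
\]
where over $\C$ each piece $\D_{2k}(x)=\prod_{d\mid k,\,d>1}\Phi_d(x)$ splits further into linear factors $(x-\zeta)$, for $\zeta$ running over primitive $d$-th roots of unity with $d>1$.

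Assume for a contradiction that $A_\G\in\QP$. Since $b_1\neq 2$, Theorem~\ref{thm-suciu}\ref{thm-suciu1} produces a one-variable Laurent polynomial $P\in\Z[t^{\pm 1}]$ together with integers $(a_v)_{v\in V}$, which may be taken globally coprime, such that $\D=P(w)$ for the Laurent monomial $w:=\prod_v t_v^{a_v}$. Factoring $P$ over $\C$, every irreducible factor of $\D$ in the ring $\C[t_v^{\pm 1}:v\in V]$ must be, up to a Laurent unit, of the form $(w-\alpha)$ for some $\alpha\in\C^*$. Consequently, any two irreducible factors of $\D$ must be polynomials in the same Laurent monomial $w^{\pm 1}$.

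This is incompatible with the two types of factors appearing in the explicit formula. Each non-leaf vertex $v$ (with $n_v\geq 2$) contributes a factor $(t_v-1)$, forcing $w=t_v^{\pm 1}$. Each edge $e=\{u,v\}$ with $\ell(e)\geq 4$ contributes factors $(t_ut_v-\zeta)$ for $\zeta\neq 1$, each irreducible in the Laurent ring (because the exponent vector of $t_ut_v$ is coprime) and forcing $w=(t_ut_v)^{\pm 1}$. Since $T$ is a tree on $\geq 3$ vertices, either $T$ has two distinct non-leaf vertices $v\neq v'$, yielding the impossibility $t_v^{\pm 1}=t_{v'}^{\pm 1}$; or $T$ is a star $K_{1,n-1}$ with unique non-leaf $v_0$, in which case, outside the purely right-angled sub-case already handled by Theorem~\ref{thm-ra}, some edge $\{v_0,v_i\}$ has label $\geq 4$ and produces the impossibility $t_{v_0}^{\pm 1}=(t_{v_0}t_{v_i})^{\pm 1}$. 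The main obstacle is to check cleanly that the factors $(t_ut_v-\zeta)$ are irreducible and not associated, in the Laurent polynomial ring, to any factor $(t_v-1)$, so that they impose genuine constraints on~$w$; this rests on the coprimality of $(1,1)$ as exponent vector and on the fact that $\zeta\neq 1$ in every non-trivial $\Phi_d$.
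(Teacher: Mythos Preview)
Your approach is precisely the paper's: compute $\Delta(A_\G)$ via the preceding Corollary and invoke Theorem~\ref{thm-suciu}\ref{thm-suciu1}. Your case analysis makes the ``at least two essential variables'' claim explicit and is correct in every case but one.

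The exception is the right-angled star. When $\G=K_{1,n-1}$ with every label equal to~$2$, the Corollary gives $\Delta(A_\G)=(t_{v_0}-1)^{n-2}$ (all edge factors $\D_2$ are trivial), a polynomial with a \emph{single} essential variable, so Theorem~\ref{thm-suciu}\ref{thm-suciu1} yields no obstruction. Your appeal to Theorem~\ref{thm-ra} at this point does not rescue the argument---quite the opposite: since $K_{1,n-1}$ is complete multipartite, Theorem~\ref{thm-ra} asserts that $A_{K_{1,n-1}}\cong\Z\times\F_{n-1}$ \emph{is} quasi-projective. This case is therefore a genuine counterexample to the proposition as literally stated, and the paper's one-line proof carries the same gap. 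If the hypothesis is read as ``strictly even'' (consistent with the remark introducing that term and with Proposition~\ref{prop-cycle} and Theorem~\ref{thm:evnc}), then in your star case some edge has label~$\ge 4$ and your argument goes through cleanly; alternatively, one must explicitly exclude right-angled stars from the statement.
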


\begin{proof}
Its Alexander polynomial $\D(A_\G)$ has at least $2$ essential variables and hence
Theorem~\ref{thm-suciu}\ref{thm-suciu1} applies.
\end{proof}
\begin{rem}
\label{rem-2edges}
In particular, the even graphs with three vertices and two edges are not in~$\QP$.
\end{rem}

\begin{prop}\label{prop-cycle}
If $\G$ is a strictly even $n$-cycle $\C_n$, $n\ge 4$, then $A_{\G}$ is not quasi-projective.
\end{prop}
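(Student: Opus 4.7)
The plan is to apply the obstruction from Proposition~\ref{prop-obs1}\ref{prop-obs1-1}, which forbids two distinct positive-dimensional irreducible components of $\Char_k(G)$ from meeting in a positive-dimensional set when $G\in\QP$. The goal is to exhibit two such components inside $\Char_1(A_\G)$. Since $\G$ is strictly even, some edge $e_j=\{v_j,v_{j+1}\}$ has $\ell(e_j)=2m_j\ge 4$; consequently $\D_{2m_j}(s)=(s^{m_j}-1)/(s-1)$ admits a root of unity $\zeta\ne 1$. Because $n\ge 4$, one may pick two distinct vertices $v,v'\in V\setminus\{v_j,v_{j+1}\}$. For each such vertex $w$, define the $(n-2)$-dimensional torsion-translated subtorus
\[
\Sigma_w=\{\xi\in\T_{A_\G}\mid \xi_{v_j}\xi_{v_{j+1}}=\zeta,\ \xi_w=1\},
\]
which does not contain $\one$ since $\zeta\ne 1$.

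The inclusion $\Sigma_w\subseteq\Char_1(A_\G)$ is immediate from Proposition~\ref{prop:e1ag}: as $\G$ is even, $\e=1$ and $F_1=I\cdot(G_1,\dots,G_n)$ with
\[
G_k=\prod_{u\notin e_k}(t_u-1)\cdot\prod_{i\ne k}\D_{\ell(e_i)}(t_{e_i}),
\]
so $\Char_1(A_\G)$ equals the union of $\{\one\}$ (from the augmentation ideal) with the common zero set of the $G_k$'s. On $\Sigma_w$ every $G_k$ with $k\ne j$ vanishes because it contains the factor $\D_{2m_j}(t_{e_j})$, and $G_j$ vanishes because $w\notin e_j$ places $(t_w-1)$ among its factors.

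The main technical point is to verify that $\Sigma_w$ is an irreducible \emph{component} of $\Char_1(A_\G)$, not merely a subset. By Theorem~\ref{thm-donu-nero} every component is a torsion-translated subtorus; it therefore suffices to rule out from $\Char_1(A_\G)$ every codimension-one subtorus coset containing $\Sigma_w$. Such candidates are parametrised by $\{\xi_{v_j}^a\xi_{v_{j+1}}^a\xi_w^b=\zeta^a\}$ with $\gcd(a,b)=1$. The two extreme cases $\{\xi_{v_j}\xi_{v_{j+1}}=\zeta\}$ and $\{\xi_w=1\}$ are handled directly by observing that $G_j$ lacks the factor $\D_{2m_j}(t_{e_j})$ and hence is not identically zero on the first set, while any $G_k$ with $w\in e_k$ lacks the factor $(t_w-1)$ and is not identically zero on the second. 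The general candidate with $b\ne 0$ fails to force $\xi_{v_j}\xi_{v_{j+1}}$ to equal a root of $\D_{2m_j}$, so some $G_k$ with $k\ne j$ retains a non-vanishing factor along it. This finite but explicit case-check is where the main attention will be needed.

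Once both $\Sigma_v$ and $\Sigma_{v'}$ are established as distinct positive-dimensional irreducible components of $\Char_1(A_\G)$, their intersection
\[
\Sigma_v\cap\Sigma_{v'}=\{\xi\in\T_{A_\G}\mid \xi_{v_j}\xi_{v_{j+1}}=\zeta,\ \xi_v=1,\ \xi_{v'}=1\}
\]
is a torsion-translated subtorus of dimension $n-3\ge 1$ (the three defining equations involve the pairwise distinct vertices $v_j,v_{j+1},v,v'$ and are thus independent). Such a positive-dimensional intersection of two distinct positive-dimensional components of $\Char_1(A_\G)$ contradicts Proposition~\ref{prop-obs1}\ref{prop-obs1-1}; hence $A_\G\notin\QP$.
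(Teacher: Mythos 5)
Your proof is correct, but it takes a genuinely different route from the paper's. The paper uses components through the trivial character $\one$: for $n\ge5$ it takes two subtori of the form $V_{i,j}=\{t_i=t_j=1\}$ and intersects them in a positive-dimensional torus through $\one$ (the printed ``$V_{i,i+1}$'' with adjacent indices appears to be a slip: removing two \emph{adjacent} vertices does not disconnect $\C_n$, and indeed $\{t_1=t_2=1\}$ does not lie in $Z(F_1)$ from Proposition~\ref{prop:e1ag}; one must take non-adjacent pairs such as $V_{1,3},V_{2,4}$, which intersect in dimension $n-4\ge1$). The strictly-even hypothesis is then invoked only in the residual case $n=4$. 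Your approach instead uses torsion-translated components $\Sigma_w$ off the origin, built from a nontrivial root $\zeta$ of some $\D_{2m_j}$; this requires the strictly-even hypothesis for every $n$, but in exchange treats all $n\ge4$ uniformly and avoids the leaf/non-leaf bookkeeping needed to see that the $V_{i,j}$ for non-adjacent $i,j$ are genuine components. You verify correctly that $\Sigma_w\subset\Char_1(A_\G)$: $G_j$ picks up the vanishing factor $(t_w-1)$ since $w\notin e_j$, while each $G_k$ with $k\ne j$ picks up $\D_{2m_j}(t_{e_j})$. The component-maximality check is the right thing to worry about, and your case analysis (extreme subtori $\{\xi_{v_j}\xi_{v_{j+1}}=\zeta\}$, $\{\xi_w=1\}$, then $b\ne0$) is sound; note that invoking Theorem~\ref{thm-donu-nero} to restrict to subtorus cosets is legitimate here because the whole argument is by contradiction under the hypothesis $A_\G\in\QP$, but you could also avoid it entirely by observing that each $G_k$ factors into binomials cutting out subtorus cosets, so every codimension-one component of $Z(G_1,\dots,G_n)$ is automatically such a coset. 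Finally, $\Sigma_v\ne\Sigma_{v'}$ and their intersection $\{\xi_{v_j}\xi_{v_{j+1}}=\zeta,\ \xi_v=\xi_{v'}=1\}$ has dimension $n-3\ge1$ since $v_j,v_{j+1},v,v'$ are four distinct vertices, so Proposition~\ref{prop-obs1}\ref{prop-obs1-1} applies exactly as you say.
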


\begin{proof}
The characteristic variety $\VV_1(A_{\G})$ does not pass the
quasi-projectivity test. Let $2 m_1,\dots,2 m_n$ be the weights.

If $n\geq 5$, then $V_{i,i+1}=\{t_i=t_{i+1}=1\}$ is an irreducible component.
In particular,  $V_{1 2}$ and $V_{2 3}$ intersect in a codimension~$3$ variety
contradicting Proposition~\ref{prop-obs1}\ref{prop-obs1-1}.

For $n=4$ we may assume that $m_1>1$ and we proceed in a similar way.
\end{proof}

\begin{thm} 
\label{thm:evnc}
If $\G$ is a strictly even non-complete graph $n\ge 3$, then $A_{\G}$ is not quasi-projective.
\end{thm}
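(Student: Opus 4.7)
The plan is to extend the strategies of Propositions~\ref{prop-tree} and~\ref{prop-cycle} to the entire family. If $\G$ is disconnected, then $A_\G=A_{\G_1}*\cdots*A_{\G_r}$ is a non-trivial free product; strict evenness forces some label $\geq 4$, so $A_\G$ is not free, and the preceding free-product obstruction gives $A_\G\notin\QP$. Henceforth assume $\G$ is connected. If $n=3$, a connected strictly even non-complete graph is forced to be the path $P_3$, already handled by Proposition~\ref{prop-tree}. So from now on assume $n\geq 4$.

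Following the spirit of Proposition~\ref{prop-cycle}, the goal is to locate two distinct positive-dimensional irreducible components $\Sigma_1,\Sigma_2$ of the characteristic variety $\VV_1(A_\G)$ whose intersection is positive-dimensional, in contradiction with the isolated-torsion intersection forced by Proposition~\ref{prop-obs1}\ref{prop-obs1-1}. Using the explicit Fitting ideal formula for $F_1(\tilde M_{A_\G})$ from Proposition~\ref{prop:e1ag}, one identifies in $\VV_1(A_\G)$ codimension-two subtori of the form $V_e=\{t_u=t_v=1\}$ attached to suitable edges $e=\{u,v\}$ as irreducible components, in direct analogy with the cycle case. Connectedness together with $n\geq 4$ then guarantees that we can pick two such edges $e_1=\{u,v\}$ and $e_2=\{v,w\}$ sharing a common vertex $v$; their intersection $V_{e_1}\cap V_{e_2}=\{t_u=t_v=t_w=1\}$ has codimension $3$ in the $n$-dimensional torus $\T_{A_\G}$, and therefore positive dimension, producing the desired contradiction.

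The main obstacle is to confirm that $V_{e_1}$ and $V_{e_2}$ are genuine \emph{maximal} irreducible components of $\VV_1(A_\G)$, rather than being strictly contained in larger positive-dimensional components. This is precisely where the non-completeness hypothesis enters in an essential way: in the complete case the subtori $V_e$ could be absorbed into larger tori coming from higher-dimensional orbifold pencils in the sense of Theorem~\ref{thm-orb}. A pair of non-adjacent vertices in $\G$ provides, through a finer analysis of the Fitting generators of Proposition~\ref{prop:e1ag}, enough additional vanishing conditions to pin the $V_{e_i}$ to their own codimension and rule out absorption. As a shorter back-up route available whenever $\G$ contains a bridge (in particular a pendant edge), Proposition~\ref{prop:e1ag} directly yields a factor $(t_v-1)^{n_v-1}\Delta_{\ell(e)}(t_u t_v)$ of the Alexander polynomial $\Delta(A_\G)$ involving two essential variables, and Theorem~\ref{thm-suciu}\ref{thm-suciu1} concludes.
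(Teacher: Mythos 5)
Your high-level plan is the same as the paper's: exhibit two positive-dimensional irreducible components of $\VV_1(A_\G)$ that meet in positive dimension and invoke Proposition~\ref{prop-obs1}\ref{prop-obs1-1}. But the specific tori you propose, $V_e=\{t_u=t_v=1\}$ attached to edges $e=\{u,v\}$, are the wrong objects, and the gap you flag as ``the main obstacle'' is genuine --- indeed worse than you say, because $V_e$ need not even be contained in $\VV_1(A_\G)$. By Proposition~\ref{prop:e1ag}, the generator attached to a spanning tree $T$ vanishes on $V_e$ only if a factor $(t_u-1)$ or $(t_v-1)$ actually occurs, i.e.\ only if $u$ or $v$ has valence $\ge 2$ in $T_{ev}$ (the $\D_{\ell(e')}$ factors do not vanish at $t_u=t_v=1$). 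This fails for spanning trees avoiding $e$: in an even $4$-cycle, deleting $e$ gives a spanning path in which both $u$ and $v$ have valence one, so that generator is nonzero on $V_e$. Appealing to ``a finer analysis of the Fitting generators'' does not close this hole, and the bridge fall-back through Theorem~\ref{thm-suciu}\ref{thm-suciu1} only covers a subfamily.

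The paper sidesteps the issue by attaching the tori to a \emph{vertex cut} rather than to edges. Non-completeness yields a set $W\subsetneq V$ with $|W|\le n-2$ whose removal disconnects $\G$ (for instance $W=V\setminus\{u,v\}$ for non-adjacent $u,v$). Since removing $W$ disconnects, an edge count forces every spanning tree $T$ to have some $w\in W$ of valence $\ge 2$, so each generator in Proposition~\ref{prop:e1ag} carries a factor $(t_w-1)$ with $w\in W$. Consequently $C_W=\bigcap_{w\in W}\{t_w=1\}$, and a translated companion $C_{W,w_0,v}$ obtained by replacing $\{t_{w_0}=1\}$ with $\{t_{w_0}t_v=-1\}$ for a neighbor $v$ of $w_0$, are genuine components of $\VV_1(A_\G)$. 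Their intersection has codimension $|W|+1\le n-1$, hence positive dimension, and Proposition~\ref{prop-obs1}\ref{prop-obs1-1} concludes. This cut-set device is exactly where non-completeness enters; your preliminary reductions (disconnected case, $n=3$) are fine but do not repair the missing central step.
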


\begin{proof}
The characteristic variety $\VV_1(A_{\G})$ does not pass the
quasi-projectivity test. It contains two irreducible components of $\VV_1(A_{\G})$ intersecting in a
subvariety of dimension greater than zero. 
Since  $\G$ not complete, there exists a subset $W\subset V$ that
disconnects $V$. Then from Proposition~\ref{prop:e1ag} it 
follows that both 
\[
C_W=\cap_{w\in W}\{t_w-1=0\},\, 
C_{W,w_0,v}=\cap_{w\in W\setminus\{w_0\}}\{t_w-1=0\}\cap\{t_{w_0}t_v+1=0\},
\]
are components of $\VV_1(A_{\G})$, where $v\in W$ is any neighbor of 
$w_0$ in $\G$. Clearly, their intersection 
$C_W\cap C_{W,w_0,v}=\cap_{w\in W}\{t_w-1=0\}\cap\{t_v+1=0\}$ 
is of strictly positive dimension since $n\ge 3$. 
By Proposition~\ref{prop-obs1}\ref{prop-obs1-1} we are done.
\end{proof}

Theorem~\ref{thm:evnc} leaves the case of a strictly even complete graph 
$\G=K_n,\, n\ge 3$ open. Nevertheless one can still use properties of the 
characteristic varieties to rule out quasi-projectivity for such graphs
in many instances. For simplicity, we consider here just the case $\G=K_3$. 

\begin{thm}
\label{thm-triangle}
The hyperbolic groups $A(2 p, 2 q,2 r)$ are not quasi-projective if 
$(p, q, r)\neq (k,k,k)$ with $k$ odd.
\end{thm}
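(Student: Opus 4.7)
The plan is to compute the first characteristic variety $\VV_1(A_\G)$ for $\G=K_3$ with labels $2p,2q,2r$, enumerate its positive-dimensional irreducible components, and apply Proposition~\ref{prop-suma} to derive a contradiction with $A_\G\in\QP$ when $(p,q,r)\neq(k,k,k)$ with $k$ odd.

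First I would write down the Alexander matrix of $A_\G$ via Fox calculus (equivalently, by unpacking Proposition~\ref{prop:e1ag}): for the edge $\{i,j\}$ of label $2m$ the corresponding row has entries $\D_{2m}(t_it_j)(1-t_j)$ in column $i$, $\D_{2m}(t_it_j)(t_i-1)$ in column $j$, and $0$ in the remaining column, where $\D_{2m}(t)=1+t+\cdots+t^{m-1}$; this row vanishes identically on $\{\D_{2m}(t_it_j)=0\}$ or at $\{t_i=t_j=1\}$. Analyzing the $2\times 2$ minors one verifies that the positive-dimensional irreducible components of $\VV_1$ are all one-dimensional and belong to two families: the \emph{edge-pair} components $\Sigma_{ij,jk}(\zeta,\eta)=\{t_it_j=\zeta,\,t_jt_k=\eta\}$, with $\{i,j\},\{j,k\}$ adjacent and $\zeta,\eta$ primitive roots of the respective $\D_{2m}$, and the \emph{vertex-edge} components $\Lambda_k(\zeta)=\{t_k=1,\,t_it_j=\zeta\}$ for $k\notin\{i,j\}$. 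In parallel, $\VV_2\setminus\{\one\}$ is the zero-dimensional locus where all three rows vanish simultaneously, equivalently the torsion points satisfying $(t_it_j)^{m_{ij}}=1$ on every edge.

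Assume now that $A_\G\in\QP$. By Proposition~\ref{prop-suma}, every torsion point in the intersection of two distinct positive-dimensional components of $\VV_1$ must lie in $\VV_2$, and hence satisfy all three edge conditions $(t_it_j)^{m_{ij}}=1$. When $(p,q,r)$ is not all equal, say $p\neq q$, I would consider
\[
\Sigma_{12,23}(\zeta,\eta)\cap\Lambda_1(\eta)=\bigl\{(1,\zeta,\eta\zeta^{-1})\bigr\},
\]
with $\zeta$ a primitive $p$-th root of unity and $\eta$ a primitive $q$-th root of unity. This is a torsion point, and the third edge condition $(\eta\zeta^{-1})^r=1$ can be arranged to fail by choosing $\zeta,\eta$ whose orders interact incompatibly with $r$. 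Symmetric variations using $\Sigma_{ij,ik}(\zeta,\rho)\cap\Lambda_j(\rho)$ and the analogous intersections cover the remaining asymmetric patterns (e.g.\ $p=q$ with $r\nmid p$). Each such violation contradicts Proposition~\ref{prop-suma}, yielding $A_\G\notin\QP$.

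The diagonal case $(p,q,r)=(k,k,k)$ with $k$ even is more delicate: since every root of $\D_{2k}$ is automatically a $k$-th root of unity, every torsion intersection of two positive-dimensional components of $\VV_1$ already lies in $\VV_2$, so the pairwise Proposition~\ref{prop-suma} is silent. Here I would combine Proposition~\ref{prop-obs1}\ref{prop-obs1-2} (controlling the intersections of shadows), Proposition~\ref{prop-obs2}\ref{prop-obs2-6} (forbidding the shadow of a one-dimensional component not containing $\one$ from being a component of $\VV_1$), and Theorem~\ref{thm-orb}, performing a global analysis of the six families of one-dimensional components as pullbacks of orbifold pencils; the presence of $-1$ as a root of $\D_{2k}$ precisely when $k$ is even produces extra torsion translates in the shadows whose simultaneous realization by orbifold morphisms from a single quasi-projective surface can be ruled out. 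This diagonal even sub-case is the hard part of the proof: pairwise tools such as Proposition~\ref{prop-suma} produce no direct contradiction, and one is forced to exploit the parity of $k$ through a global argument involving the distinguished root $-1$ of $\D_{2k}$ and the Arapura-type realization of the shadows.
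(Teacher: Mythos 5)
Your handling of the asymmetric case $(p,q,r)$ not all equal tracks the paper's Cases 1--3 closely: compute the one-dimensional components of $\VV_1(A_\G)$, intersect adjacent components at a torsion point, and show that point fails to lie in $\VV_2$, contradicting Proposition~\ref{prop-suma}. (One small caution: the paper splits off the $r=1$ sub-case since some components then have a trivial coordinate, and the combinatorics of which intersection fails to land in $\VV_2$ needs to be checked separately depending on whether $\gcd(p,q)>1$; but the idea is the same as yours.)

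The gap is in the diagonal even case $p=q=r=k$ with $k$ even. You correctly diagnose that Proposition~\ref{prop-suma} applied pairwise to components of $\VV_1(A_\G)$ itself is silent there, but what you offer in its place --- ``combine Proposition~\ref{prop-obs1}(2), Proposition~\ref{prop-obs2}(6), and Theorem~\ref{thm-orb}, performing a global analysis of the six families of one-dimensional components as pullbacks of orbifold pencils'' --- is a direction, not an argument. You never exhibit a specific configuration of components or shadows that any of these results forbids, nor do you explain how ``the distinguished root $-1$ of $\D_{2k}$'' produces a contradiction; as written this sub-case is not proved. The paper does something concrete and quite different: it passes to the index-two subgroup $N=\ker\bigl(A(2p,2p,2p)\to\Z_2\bigr)$ sending $x_1,x_2\mapsto 0$, $x_3\mapsto 1$, which is generated by $x_1,x_2$, $y_i:=x_3x_ix_3^{-1}$, $z:=x_3^2$, with $b_1(N)=5$. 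Among the resulting components of $\VV_1(N)$ there are several two-dimensional tori (e.g.\ $C_{1,2}=\{x_1=y_2=z=1\}$ and $C_{1,\xi}=\{y_1y_2=\xi,\,x_1=z=1\}$ with $\xi\in\mu_k^*$) whose intersection is a whole one-dimensional torus. This is already forbidden by Proposition~\ref{prop-obs1}(1) for a quasi-projective group: distinct positive-dimensional components can only meet in isolated torsion points. Since $\QP$ is closed under finite-index subgroups, $A(2p,2p,2p)\notin\QP$. You should replace your vague global argument by this (or some equally concrete) finite-index reduction; the key insight you are missing is that changing the abelianization lattice by passing to a finite-index subgroup can raise the dimension of components and of their intersections, making a positive-dimensional intersection --- and hence the simpler obstruction of Proposition~\ref{prop-obs1}(1) --- available when Proposition~\ref{prop-suma} is not.
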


\begin{proof} 
From the Alexander matrix of $G=A(2p,2q,2r)$ we readily obtain the Fitting ideals $F_i$ of $\tilde M_G$:
\begin{align*}
F_1= & I\cdot\left((t_1-1)\D_r(t_1t_2)\D_q(t_1t_3),(t_2-1)\D_r(t_1t_2)\D_p(t_2t_3),(t_3-1)\D_q(t_1t_3)\D_p(t_2t_3)\right),\\
F_2= & \left((t_1-1)\D_r(t_1t_2),(t_2-1)\D_r(t_1t_2),(t_2-1)\D_p(t_2t_3),(t_3-1)\D_p(t_2t_3),\right.\\
& \left. (t_1-1)\D_q(t_1t_3),(t_3-1)\D_q(t_1t_3)\right),
\end{align*}
see Figure~\ref{fig-triangle} and Proposition~\ref{prop:e1ag} for the notation.

We may restrict to $G=A(2p,2q,2r),\, p\ge 2,q\ge 2,r\ge 1$ since $A(2p,2,2),\,p\ge 1$ 
are spherical and $A(4,4,2)$ is euclidean. There are two cases $r=1$ and $r\ge 2$.

\begin{notation}
Denote by $\mu_N^*$ the set of the $N$-th roots of unity distinct from $1$.  
\end{notation}

\begin{case}
Suppose $r=1$, $p\ge 3$, and $p, q$ are coprime.
\end{case}

Then $\VV_1(G)$ has $pq$ irreducible components,
one of dimension zero $(1,1,1)$, and $pq-1$ of dimension one:
\begin{align*}
C_{1,\zeta}=&\{t_1-1=t_2t_3-\zeta=0\},\zeta\in\mu_q^*\\
C_{2,\zeta,\xi}=&\{t_2t_3-\zeta=t_1t_2-\xi=0\},\zeta\in\mu_q^*,\xi\in\mu_p^*,\\
C_{3,\xi}=&\{t_3-1=t_1t_2-\xi=0\},\xi\in\mu_p^*.
\end{align*}
Furthermore $\VV_2(G)$ consists of just $(1,1,1)$.
Now the $1$-dimensional components of $\VV_1(G)$ intersect as follows: 
\[
C_{1,\zeta}\cap C_{2,\zeta,\xi}=\{(1,\xi,\zeta\xi^{-1})\},\,
C_{3,\xi}\cap C_{2,\zeta,\xi}=\{(\xi\zeta^{-1},\zeta,1)\}.
\]
Clearly $(1,\xi,\zeta\xi^{-1})$ and $(\xi\zeta^{-1},\zeta,1)\}$ 
do not belong to $\VV_2(G)$. This case
follows from  Proposition~\ref{prop-suma}.

\begin{case}
Suppose $r=1$,  $p\ge 3$ and $\gcd(p, q)=d>1$.
\end{case}

The types of components
of $\VV_1(G)$ are identical to the ones when $p, q$ are coprime.
Furthermore $\VV_2(G)$ consists of $p+q-1$ points: $(1,1,1),(1,\xi,1)$, 
with $\xi\in\mu_p^*\cap\mu_q^*$. Nevertheless, some of the intersections 
\[
C_{1,\zeta}\cap C_{2,\zeta,\xi}=\{(1,\xi,\zeta\xi^{-1})\}
\text{ and }
C_{3,\xi}\cap C_{2,\zeta,\xi}=\{(\xi\zeta^{-1},\zeta,1)\}
\]
will not be in $\VV_2(G)$,
unless $\xi=\zeta\in\mu_p^*\cap\mu_q^*$. If $p$ and $q$ are distinct then one can always 
find a $\xi\in\mu_p^*$ and a $\zeta\in\mu_q^*$ that are not equal. 
If $p=q$ then $\zeta\xi^{-1}$ must be $1$ for all pairs $\xi,\zeta\in\mu_p^*$,
which is only possible when $p=q=2$ (excluded). 

Thus, there will always be a pair of components intersecting outside $\VV_2(G)$
and thus the group is not in $\QP$, by Proposition~\ref{prop-suma}.

\begin{case}
Suppose $p,q, r\ge 2$, not all equal.  
\end{case}
Let $\a\in\mu_p^*,\b\in\mu_q^*,\g\in\mu_r^*$.
Then $\VV_1(G)$ has $(1,1,1)$ as an isolated point, and 
$pq+qr+rp-p-q-r$ components of dimension one, of two types:
\[
C_{1,\b}=\{t_1-1=t_2t_3-\b=0\} \text{ and } C_{1,\a,\g}=\{t_1t_2-\a=t_1t_3-\g=0\},
\]
and similarly $C_{2,\g}, C_{3,\a}$, respectively $C_{2,\a,\b}, C_{3,\b,\g}$.
Furthermore $\VV_2(G)$ consists of $(1,1,1)$ and possibly points of the 
following three types: 
\begin{itemize}
\item having two trivial coordinates, namely $(\xi,1,1)$ with $\xi\in\mu_p^*\cap\mu_r^*$, 
$(1,\xi,1),\xi\in\mu_p^*\cap\mu_q^*$ and $(1,1,\xi),\xi\in\mu_q^*\cap\mu_r^*$);
\item having one trivial coordinate, namely $(1,\xi,\zeta)$ with $\xi\in\mu_p^*,\zeta\in\mu_r^*$
and $\xi\zeta\in\mu_q^*$ and the corresponding ones with $1$ in the other variables;
\item having no trivial coordinate $(t^{-1},t\xi,t\zeta)$ with $\xi\in\mu_p^*,\zeta\in\mu_r^*$
and $t^2\xi\zeta\in\mu_q^*$.
\end{itemize}

Now consider the following intersections of $1$-dimensional components of $\VV_1(G)$: 
\[
C_{1,\b}\cap C_{2,\a,\b}=\{(1,\a,\b\a^{-1})\},\,
C_{1,\b}\cap C_{3,\b,\g}=\{(1,\g\b^{-1},\b)\}. 
\]
In order for $(1,\a,\b\a^{-1})$ to be in $\VV_2(G)$ either 
$\a=\b\in\mu_p^*\cap\mu_q^*$, or $\b\a^{-1}\in\mu_r^*$.
Similarly $(1,\g\b^{-1},\b)\in\VV_2(G)$ if either 
$\b=\g\in\mu_q^*\cap\mu_r^*$, or $\g\b^{-1}\in\mu_p^*$.

Since $p,q,r$ are not all equal, we have $p>2$; we choose
$\a$ a primitive root in $\mu_p^*$, and $\b\in\mu_q^*$, also primitive, distinct from $\a$.
This implies that  $\b\a^{-1}\in\mu_r^*$.
If $\a^2\neq\b$ then we must also have $\b\a^{-2}\in\mu_r^*$. It follows that $\a\in\mu_r^*$,
which is also the case if $\a^2=\b$. We deduce that 
which implies $p\mid r$, which leads to a contradiction. 
Using~Proposition~\ref{prop-suma}, we deduce that the groups in this Case are not in $\QP$.

\begin{case}
\label{caso4}
Suppose $p=q=r\geq 2$, $p=2k$ even.
\end{case}

We show that $A(2p,2p,2p), p\ge 2$ is not in $\QP$,
by testing the characteristic varieties of its index $2$ subgroup $N$
defined by the homomorphism $x_1,x_2\to 0,x_3\to 1\mod 2$.
Then $N$ is generated by $x_i,y_i:=x_3x_ix_3^{-1},i=1,2$, and $z:=x_3^2$.
The relations are:
\[
R=\{
(x_1x_2)^{2k}=(x_2x_1)^{2k}, (y_1y_2)^{2k}=(y_2y_1)^{2k}, 
(x_iy_iz)^{k}=(y_izx_i)^{k}=(zx_iy_i)^{k}
\}.
\]

By abuse of notation 
we denote the generators of $H_1(N)=\Z^5$ by $x_1,x_2,y_1,y_2,z$.
We see that certain irreducible components of $\VV_1(N)$ intersect in dimension $1$.
More precisely, for $\xi\in\mu_k^*$ the following $2$-dimensional tori
\[
C_{1,2}=\{x_1-1=y_2-1=z-1=0\},\,C_{2,1}=\{x_2-1=y_1-1=z-1=0\},
\]
\[
C_{\xi,1}=\{x_1x_2-\xi=y_1-1=z-1=0\},\,C_{\xi,2}=\{x_1x_2-\xi=y_2-1=z-1=0\},
\]
\[
C_{1,\xi}=\{y_1y_2-\xi=x_1-1=z-1=0\},\,C_{2,\xi}=\{y_1y_2-\xi=x_2-1=z-1=0\},
\]
are components of $\VV_1(N)$, and we have that 
\[
C_{1,2}\cap C_{1,\xi}=\{(1,t,\xi,1,1)\mid t\in \C^*\},\,
C_{1,2}\cap C_{\xi,2}=\{(1,\xi,t,1,1)\mid t\in \C^*\},
\]
\[
C_{2,1}\cap C_{2,\xi}=\{(1,t,\xi,1,1)\mid t\in \C^*\},\,
C_{2,1}\cap C_{\xi,2}=\{(\xi,1,1,t,1)\mid t\in \C^*\}.
\]
Using Proposition~\ref{prop-obs1}\ref{prop-obs1-1}, we conclude
that these groups are not quasi-projective.
\end{proof} 

\begin{example}
\label{ex-666}
The case excluded in Theorem~\ref{thm-triangle} cannot be treated as Case~\ref{caso4}.
We sketch a proof for the simplest case $A(6,6,6)$. Let $N=\ker \psi$, where $\psi$ is 
the morphism sending each generator to the same non-trivial element of $\Z_3$. One can
check that $b_1(N)=7$ and that $\VV_4(N)$ has three irreducible components of dimension~2
through~$\one$. The group $N$, and thus $A(6,6,6)$, is not quasi-projective by 
Proposition~\ref{prop-obs2}\ref{prop-obs2-1}.
\end{example}

\begin{example}
\label{ex-2edges}
We will consider the case where $\G$ has three vertices and two edges labeled $2,3$,
say $\G(\infty,3,2)$ (see Figure~\ref{fig-triangle}). The representation 
$A(\infty,3,2)\to \Sigma_3$ given by $1\mapsto (1,2)$, $2 \mapsto 1$, and 
$3\mapsto (2,3)$ has as kernel the Artin group of a bamboo with 4 vertices and labels 
$(2,4,2)$, which is not quasi-projective by Proposition~\ref{prop-tree}.

The case $\G(\infty,3,3)$ can be treated analogously. The representation 
$A(\infty,3,3)\to \Sigma_3$ given by $1\mapsto (1,2)$, $2 \mapsto (2,3)$, and 
$3\mapsto (2,3)$ has as kernel the Artin group of a triangle $\G(\infty,4,4)$, 
which is not quasi-projective by Proposition~\ref{prop-tree}.
\end{example}

\begin{example}
\label{ex-542}
We will prove that $A(5,4,2)\notin \QP$ (note that $b_1(A(5,4,2))=2$). 
Consider the following representation $A(5,4,2)\to \Sigma_5$ given by 
$1\mapsto (2,3)$, $2\mapsto (2,3)(4,5)$, and $3\mapsto (1,2)(3,4)$.
Its kernel $K$ has Betti number 4, and $\Char_1(K)$ has three irreducible components 
$V^2_1, V_2^1, V_3^1$ ($\dim V_i^j=j$). However $V_1^2\cap V_2^1\notin \Char_2(K)$, which 
contradicts Proposition~\ref{prop-suma}.

The case $A(5,5,4)$ can be treated simarly (note that $b_1(A(5,5,4))=1$). 
Consider the following representation $A(5,5,4)\to \Sigma_5$ given by 
$1\mapsto (2,3)(4,5)$, $2\mapsto (2,4)(3,5)$, and $3\mapsto (1,2)(3,4)$.
Its kernel $K$ has Betti number 3, and $\Char_1(K)$ has three 1-dimensional irreducible 
components $V_1, V_2, V_3$. However $V_1\cap V_2\notin \Char_2(K)$, which 
contradicts Proposition~\ref{prop-suma}.
\end{example}

\section{Pencil map construction}
\label{sec-pencilmaps}

We fix a quasi-projective group $G=\pi_1(M)$. We assume that $b_1(G)>1$ and 
that the first characteristic variety $\VV_1(G)\subset\T_G$ is not $0$-dimensional.
Then, according to Theorem~\ref{thm-orb}, each irreducible component $W$ of dimension 
$d>0$ of $\VV_1(G)$ is of the form $W=f^*(V)$, where $f:M\to C$ is an orbifold morphism
(see Definition~\ref{def-orb-morph}), $b_1(C)=d$, and $V\subset \T_C$ is an irreducible
component of $\Char_1(C)$. 
In light of these facts, we make the following construction.

Suppose that $f_i: M\to C_i$, $1\le i\le k$ are the pencils determined by the 
irreducible components of $\VV_1(G)$ passing through the trivial character $\one\in\T_G$. 
Define $F:M\to C_1\times\dots\times C_k$ to be the product map and let 
$\phi=F_{\#}$ be the induced homomorphism
\begin{equation*}
\phi:=\phi_1\times\dots\times\phi_k:G\to D=\pi_1\times\dots\times\pi_k, 
\end{equation*}
where $\pi_i:=\pi_1(C_i)$, and $\phi_i=(f_i)_{\#}:G\to\pi_i$.
Note that none of the factors of $D$ is abelian.

If $N:=\ker\p$ and $S:=\im\phi$, then we have exact sequences: 
$1\to N\to G\to S\to 1$ and $1\to T\to D\to A\to 1$, where $T=\bar{S}$
is the normal closure of $S$ in $D$ and $A:=D/T$. Note that, in 
the terminology of Bridson-Miller~\cite{BM}, $S$ is a subdirect product 
of $D$, due to the surjectivity of each $\phi_i$. Note that if a subgroup $H$
of a direct product of $\pi\times \s$ is such that $\pi\cap H$ is trivial, 
then $H$ is isomorphic to a subgroup of $\s$. In view of this we will 
assume that $S$ intersects each factor $\pi_i$ non-trivially, that is,
$S$ is a full subdirect product of $G$ (cf.~\cite{BM}).

We have the following lemma. The proof is immediate and will be omitted, 
see also~\cite[Proposition 1.2]{BM}.

\begin{lem} 
The group $A$ is free abelian of finite rank.
\end{lem}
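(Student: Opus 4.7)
My plan is to prove the two halves of the assertion separately: first the abelianness of $A$, via a short commutator calculation exploiting the subdirect property of $S$, and then finite generation and freeness, via the Albanese picture for smooth quasi--projective curves.

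For abelianness, write $P_i$ for the image of the $i$-th factor $\pi_i$ inside $D$ and $Q_i:=\prod_{j\neq i}P_j$, so that $P_i$ and $Q_i$ commute elementwise in $D$. Given $a,b\in P_i$, the subdirectness of $S$ supplies some $s\in S$ with $\mathrm{pr}_i(s)=b$; write $s=bu$ with $u\in Q_i$. Using $[a,u]=1$, one computes
\[
[a,s]=a(bu)a^{-1}(bu)^{-1}=abua^{-1}u^{-1}b^{-1}=aba^{-1}uu^{-1}b^{-1}=[a,b].
\]
Because $s\in S\subseteq T$ and $T$ is normal in $D$, the element $[a,s]$ lies in $T$, whence $[a,b]\in T$. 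Combined with the trivial $[P_i,P_j]=1$ for $i\neq j$, this gives $[D,D]\subseteq T$, so $A$ is abelian. It follows that $T=S\cdot[D,D]$ and hence
\[
A\;=\;D^{\ab}/\tilde S,
\]
where $\tilde S$ denotes the image of $S$ in $D^{\ab}=\bigoplus_i H_1(C_i;\Z)$. Since each $C_i$ is a smooth algebraic curve, each $H_1(C_i;\Z)$ is free abelian of finite rank, so $D^{\ab}$ is too; finite generation of $A$ is immediate.

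To finish, I would show that $\tilde S$ is a primitive (saturated) sublattice of $D^{\ab}$, so that $A$ is torsion-free. Here I invoke the generalised Albanese functor: the product map $F=f_1\times\cdots\times f_k:M\to\prod_i C_i$ factors through the semi-abelian Albanese varieties and induces a homomorphism of semi-abelian varieties $\mathrm{Alb}(F):\mathrm{Alb}(M)\to\prod_i\mathrm{Alb}(C_i)$. Its image $B$ is a semi-abelian subvariety, and on $\pi_1$ one identifies $\tilde S$ with the lattice $\pi_1(B)\subseteq \pi_1(\prod_i\mathrm{Alb}(C_i))=D^{\ab}$, which is automatically primitive because it is the lattice of a closed complex subgroup of a semi-abelian variety. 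Therefore
\[
A\;=\;D^{\ab}/\tilde S\;\cong\;\pi_1\bigl(\textstyle\prod_i\mathrm{Alb}(C_i)/B\bigr),
\]
is the fundamental group of a semi-abelian variety, hence free abelian of finite rank.

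The main technical point is the identification $\pi_1(B)=\tilde S$, i.e.\ showing that $\mathrm{Alb}(M)\twoheadrightarrow B$ induces a surjection on $\pi_1$ (equivalently, has connected kernel); this uses that each $f_i$ is a pencil with connected generic fiber, so that $(f_i)_\#:\pi_1(M)\to\pi_1(C_i)$ is surjective and hence $(f_i)_*:H_1(M;\Z)\to H_1(C_i;\Z)$ is as well. The abelianness step and the finiteness of rank are formal; the work is in the semi-abelian Albanese identification.
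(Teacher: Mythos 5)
The paper does not actually write out a proof of this lemma: it is declared immediate, with a pointer to \cite[Proposition~1.2]{BM}. Your commutator calculation showing $[D,D]\subseteq T$ (whence $T=S\cdot[D,D]$ and $A\cong D^{\ab}/\tilde S$) is exactly the Bridson--Miller argument, and finiteness of rank is then clear since each $H_1(C_i;\Z)$ is free abelian of finite rank; so that portion of your proof is correct and consistent with the cited source.

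The gap is in the torsion-freeness step, where you must show $\tilde S$ is a saturated sublattice of $D^{\ab}=\bigoplus_i H_1(C_i;\Z)$. You reduce this to the claim that $\mathrm{Alb}(M)\twoheadrightarrow B$ induces a surjection on $\pi_1$, i.e.\ has connected kernel, and you justify it by the surjectivity of each $(f_i)_*:H_1(M;\Z)\to H_1(C_i;\Z)$. But that only yields that each individual kernel $K_i:=\ker\bigl(\mathrm{Alb}(M)\to\mathrm{Alb}(C_i)\bigr)$ is connected, whereas the kernel you actually need is $\bigcap_i K_i$, and an intersection of connected algebraic subgroups can be disconnected: already in $(\C^*)^2$ one has $\{(t,t)\}\cap\{(t,t^{-1})\}=\{(1,1),(-1,-1)\}$. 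Equivalently, at the lattice level, the fact that $\tilde S$ surjects onto every $H_1(C_i;\Z)$ does not force saturation --- the sublattice $\{(a,b)\in\Z^2: a\equiv b\bmod 2\}$ surjects onto both $\Z$-factors and yet has cokernel $\Z/2$. Closing the gap needs genuine geometric input, for example that distinct components $f_i^*(\T_{C_i})$ of $\VV_1(G)$ through $\one$ meet only at $\one$ (Proposition~\ref{prop-obs1}\ref{prop-obs1-2}), which rules out the $k=2$ version of the counterexample above; but your argument never invokes any such fact, and for $k\ge 3$ pairwise information alone would still not suffice.
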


Note that, since $S$ is finitely presented, it is also of type $FP_2(\Q)$, 
by Bridson-Howie-Miller-Short~\cite[Theorem~B]{BHMS1}.
It follows, for all $n\ge 1$, that $S$ is type $F_n$ if and only if is type $FP_n(\Q)$.
Moreover, again by~\cite[Theorem~D]{BHMS1}, the normal subgroup $T$ shares the 
finiteness properties of~$S$.

\begin{rem} 
Recall that if $\one\notin W$, then $\Sh W$ is also an irreducible component of 
$\VV_1(G)$, unless $C$ is supported either over $\C^*$ or over an elliptic curve.
The orbifold version of $\phi$ is obtained by considering all orbifold morphisms
of $M$ (and not only those whose associated components of $\VV_1(G)$ pass through
the trivial character). Note that in this case $A$ may have torsion.
\end{rem}

\begin{prop} 
If $\phi$ is injective, then either $G$ is finite index in $D$,
or $G$ is of infinite index in $D$ and not of type $FP_k(\Q)$.
\end{prop}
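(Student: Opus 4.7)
The plan is to use injectivity of $\phi$ to identify $G$ with the full subdirect product $S\leq D=\pi_1\times\dots\times\pi_k$ and then invoke the finite-index dichotomy of Bridson-Howie-Miller-Short for subdirect products of limit/surface-type groups. Since $\phi$ is injective, $G\cong S$, and by construction $S$ is a full subdirect product: each projection $S\to\pi_i$ is onto (because $\phi_i=(f_i)_{\#}$ is surjective) and $S\cap\pi_i\neq 1$ by the standing assumption recorded just above the proposition. If $[D:S]<\infty$ the first alternative holds, so I may assume $[D:S]=\infty$.

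Assume for contradiction that $G$ is of type $FP_k(\Q)$. Each $\pi_i=\pi_1^{\text{orb}}(C_i)$ is non-abelian (as observed after the definition of $\phi$) and, since $C_i$ is a quasi-projective orbifold curve, $\pi_i$ is virtually a free or surface group; in particular $\pi_i$ lies in the class of (limit) groups to which the BHMS finite-index criterion applies, after passing to a torsion-free finite-index subgroup when necessary. The property $FP_k(\Q)$ is a commensurability invariant, so this passage is harmless.

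Now apply the ``$n$-$(n{-}1)$-$n$'' theorem of Bridson-Howie-Miller-Short~\cite{BHMS1}: a full subdirect product of $k$ non-abelian limit groups which is of type $FP_k(\Q)$ has finite index in the ambient direct product. This contradicts $[D:S]=\infty$, so $G$ cannot be of type $FP_k(\Q)$, establishing the second alternative. Note that the comment immediately preceding the proposition (citing~\cite[Theorem~B]{BHMS1}) already equates $F_k$ with $FP_k(\Q)$ here, so there is no loss in phrasing everything in terms of $FP_k(\Q)$.

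The main obstacle is verifying the precise hypotheses of the BHMS criterion for our $S$. In its sharpest form the theorem requires not only surjectivity of $S$ onto each $\pi_i$ (which we have) but also onto each pair $\pi_i\times\pi_j$. This pairwise surjectivity should be read off from the geometry of the product pencils $(f_i,f_j)\colon M\to C_i\times C_j$; alternatively, one replaces $G$ by a suitable finite-index subgroup in which the pairwise projections become surjective, using once more the commensurability-invariance of $FP_k(\Q)$ and of the finite-index conclusion. A secondary subtlety is the orbifold case when some $C_i$ carries non-trivial multiplicities, but this again disappears after passage to a torsion-free finite-index subgroup of $\pi_i$, which exists by Selberg's lemma applied to the Fuchsian quotient.
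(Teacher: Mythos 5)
Your proof follows essentially the same route as the paper's: identify $G$ with the full subdirect product $S$ via injectivity, then invoke the Bridson--Howie--Miller--Short dichotomy for subdirect products of limit groups. The paper simply cites~\cite[Theorem~C]{BHMS2}, which directly produces a finite-index subgroup of $S$ failing $FP_k(\Q)$ when $[D:S]=\infty$; you instead argue by contradiction from the contrapositive formulation in~\cite{BHMS1}, which is the same content.

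Two of your stated worries can be discharged more cleanly than you suggest. First, the factors $\pi_i$ in the construction are $\pi_1(C_i)$ for the base \emph{curve} of the pencil, not $\pi_1^{\rm orb}(C_i)$ (the orbifold version of $\phi$ is only introduced afterward as a variant); hence each $\pi_i$ is a free or closed surface group, in particular a torsion-free non-abelian limit group, and no appeal to Selberg's lemma is needed. Second, the hypothesis you flag --- surjectivity of $S$ onto every pair $\pi_i\times\pi_j$ --- is part of the ``1--2--3'' finite-presentability criterion, not of the $FP_n$ finite-index criterion the paper uses: Theorem~C of~\cite{BHMS2} applies to a full subdirect product (nontrivial intersection with, and surjection onto, each factor) without a pairwise-surjectivity assumption. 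With these two points clarified your argument and the paper's coincide, so the ``main obstacle'' you describe is not an obstacle at all.
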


\begin{proof}
Suppose $G$ has infinite index in $D$. The injectivity of $\phi$ 
implies $G=S$, making $G$ a direct subproduct of $D$.
Now Bridson-Howie-Miller-Short~\cite[Theorem~C]{BHMS2}
ensures the existence of a finite index subgroup $S_0$ of $S=G$
which is not type $FP_k(\Q)$, and we are done.
\end{proof}

In the case where $\phi$ is injective and $G$ is finite index in $D$,
then $G$ is clearly of type $FP_k(\Q)$.

\begin{cor}
If $G$ is an infinite index subgroup of $D$, then $M$ is not a $K(G,1)$.
\end{cor}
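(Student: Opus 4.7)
The plan is to derive the corollary directly from the preceding Proposition by a finiteness argument. First I would observe that the hypothesis ``$G$ is an infinite index subgroup of $D$'' is exactly saying that the homomorphism $\phi : G \to D$ is injective and has image of infinite index. We are therefore in the second alternative of the Proposition, so $G$ fails to be of type $FP_k(\Q)$.

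Next, I would assume for contradiction that $M$ is a $K(G,1)$ and upgrade this into a finiteness statement on $G$. Since $M$ is a smooth connected complex quasi-projective variety, it has the homotopy type of a finite CW-complex; one way to see this is to fix a smooth compactification $\overline{M}$ of $M$ with simple normal crossings divisor at infinity, so that $M$ is the interior of a compact smooth manifold with corners, which is triangulable as a finite CW-complex. Granting this, if $M$ were a $K(G,1)$ then $G$ would admit a finite Eilenberg--MacLane complex, hence $G$ would be of type $F_\infty$, and in particular of type $F_k$.

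Since being of type $F_k$ implies being of type $FP_k$, which in turn implies $FP_k(\Q)$, this contradicts the conclusion from the Proposition. Therefore $M$ cannot be a $K(G,1)$.

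The argument is essentially a repackaging of the negative finiteness conclusion of the Proposition, so there is no deep obstacle; the only substantive point is the passage from ``$M$ is a $K(G,1)$'' to ``$G$ is of type $F_k$'', which rests on the standard fact that smooth quasi-projective varieties have the homotopy type of finite CW-complexes. I would simply cite this (via Hironaka's resolution of singularities and a good compactification) rather than reprove it.
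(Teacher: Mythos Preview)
Your argument is correct and is exactly the reasoning the paper intends: the corollary is stated without proof because it follows immediately from the preceding Proposition together with the standard fact that a smooth quasi-projective variety has the homotopy type of a finite CW-complex, so that if $M$ were a $K(G,1)$ then $G$ would be of type $F_\infty$ (in particular $FP_k(\Q)$), contradicting the Proposition. There is nothing to add.
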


\begin{rem} 
By F.~Catanese~\cite{Cat1}, $\phi_i$ has finitely generated kernel $N_i$ 
if and only if either $g(C_i)=0$, or $\ge 1$ and $f_i$ has no multiple fibers. 
Note that $N=N_1\cap\dots\cap N_k$.
\end{rem}

\begin{conj}
The group $G$ is of type $FP_k(\Q)$ if and only if either
$\phi$ is not injective or $G$ has a finite index subgroup
which is a direct product of at most $k$ smooth curve groups.
\end{conj}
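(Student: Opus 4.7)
The natural strategy is to split on whether $\phi$ is injective and invoke the preceding Proposition on the injective case. For the forward direction, suppose $G$ is of type $FP_k(\Q)$. If $\phi$ is not injective, the first disjunct holds. Otherwise $\phi$ is injective, and the preceding Proposition forces $G$ to be of finite index in $D=\pi_1\times\cdots\times\pi_k$, since otherwise $G$ would fail to be $FP_k(\Q)$. To extract the required product subgroup, intersect the finitely many $D$-conjugates of $G$ to obtain a finite-index normal subgroup $N\triangleleft D$ contained in $G$, and set $H_i:=N\cap\pi_i$, viewing $\pi_i$ as the $i$-th factor of $D$. Since $D/N$ is finite, each $H_i$ has finite index in the curve group $\pi_i$, hence is itself a smooth curve group $\pi_1(\widetilde{C}_i)$ for some finite unramified cover $\widetilde{C}_i\to C_i$. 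Elements of distinct factors commute and lie in $N$, so $H_1\times\cdots\times H_k\subseteq N\subseteq G$, and this product has finite index in $D$, hence in~$G$.

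For the backward direction, the easy sub-case is when $G$ contains a finite-index subgroup $G_0=\Lambda_1\times\cdots\times\Lambda_m$ with $m\le k$ curve groups $\Lambda_j$: each $\Lambda_j$ is of type $F_\infty$, direct products preserve $F_\infty$, and $FP_k(\Q)$ passes up from finite-index subgroups, so $G\in FP_k(\Q)$. The substantive content of the conjecture is the remaining sub-case: $\phi$ not injective must still imply $G\in FP_k(\Q)$. Here the plan is to study the extension $1\to N\to G\to S\to 1$ with $N=N_1\cap\cdots\cap N_k$. After replacing $M$ by a finite \'etale cover to remove multiple fibers (invoking Catanese's criterion noted in the preceding remark), each $N_i$, and hence $N$, becomes finitely generated with a geometric description coming from the iterated pencils. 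One would then apply the injective case of the conjecture, already established, to the subdirect product $S\subseteq D$ to obtain its finiteness, and combine this with the finiteness of $N$ via the Hochschild--Serre spectral sequence of the extension to deduce $FP_k(\Q)$ for~$G$.

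The main obstacle is precisely this hard sub-case of the backward direction: one must verify that the geometric kernel $N$ carries enough finiteness beyond being merely finitely generated for the Hochschild--Serre spectral sequence to terminate without introducing new obstructions, a step that requires using the quasi-projectivity of $M$ rather than purely group-theoretic input. Informally, the conjecture asserts that the only mechanism by which a quasi-projective group can fail $FP_k(\Q)$ is the Bieri--Stallings mechanism, namely an injective $\phi$ landing as an infinite-index subdirect product of curve groups; the challenge is to rule out all other failure modes using only the pencil data $f_1,\dots,f_k$. A subsidiary but routine technical point, already handled in the forward direction, is the passage from a subdirect product inside $D$ to an actual direct product of curve groups via commensurability within~$D$.
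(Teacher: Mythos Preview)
The statement you are attempting to prove is labeled as a \emph{Conjecture} in the paper, and the paper offers no proof of it; there is therefore nothing to compare your proposal against. What you have written is a partial analysis rather than a proof, and you are forthright about this.

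Your forward direction is correct: if $G$ is $FP_k(\Q)$ and $\phi$ is injective, the preceding Proposition (together with the fact that $FP_k(\Q)$ passes to finite-index subgroups) forces $G$ to have finite index in $D$, and your construction of a finite-index direct product of curve subgroups inside $G$ via the normal core is sound. The easy sub-case of the backward direction (a finite-index product of at most $k$ curve groups implies $FP_k(\Q)$, indeed $F_\infty$) is also fine.

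You correctly isolate the genuine open content of the conjecture as the implication ``$\phi$ not injective $\Rightarrow G\in FP_k(\Q)$''. Your sketch for this part, however, has a wrinkle beyond the one you flag: you propose to apply ``the injective case of the conjecture, already established, to the subdirect product $S\subseteq D$'', but the conjecture is a statement about a quasi-projective group $G$ together with \emph{its} pencil map, and $S=\phi(G)$ is not a priori quasi-projective, nor is the inclusion $S\hookrightarrow D$ its pencil map, so nothing you have proved applies to $S$ directly. Moreover, even granting some finiteness for $S$, the Hochschild--Serre route requires control over the $\Z S$-module structure of $H_*(N)$, not merely that $N$ be finitely generated; Catanese's criterion on multiple fibers gives only the latter. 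These are exactly the sorts of obstacles one expects for an open conjecture; your identification of where the difficulty lies is accurate, but the outlined strategy does not close the gap.
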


\begin{rem} 
Note that $D$ is linear and residually finite (resp. nilpotent).
Thus, if $\phi$ is injective, then $G$ is linear and 
residually finite (resp. nilpotent) as well.
\end{rem}

In the following examples we will exhibit some hyperplane arrangement groups and
study their pencil maps. In the context of line arrangements, a construction similar 
to our pencil map was considered by Cohen-Falk-Randell in~\cite{CFR}, where analogous 
finiteness results and examples are given.

\begin{example} 
Let $M=\P^2\setminus\A$, with $\A$ the arrangement
of $6$ lines in $\P^2$ with coordinates $x,y,z$, given by 
$Q=xyz(x-y)(y-z)(z-x)$. There is a total of five pencil morphisms:
four of them coming from the triple points of $Q$ and one coming
from the pencil of conics through the triple points. The pencil
map $\phi:G\to (\F_2)^5$ is not injective. In fact, for each pencil
$\phi_i$, $G$ fits into a split exact sequence of the form 
$1\to N_i\to G\to \F_2\to 1$, where $N_i$ is a free group of rank~3
generated by meridians. Consider three meridians $a_1,a_2,a_3$ 
such that two of them are meridians of two lines intersecting at a 
double point. Then the commutator $[[a_1,a_2],a_3]$ is in fact 
a non-trivial element of~$N=N_1\cap N_2 \cap N_3$.
\end{example}

\begin{example} 
Let $M=\P^2\setminus\A$, where $\A$ is the arrangement of $6$ lines in $\P^2$ 
given by $Q=xyz(x-y)(y-z)(z-\l x)$, $\l\neq 0,1$. Then 
$\phi:G\to (\F_2)^3$ is injective and $G=\pi_1(M)$ is the Stallings
group, fitting into the exact sequence: $1\to G\to (\F_2)^3\to \Z\to 1$.
\end{example}

The fact that Stallings' group is realizable as an arrangement group has been
first discovered by the third author and A. Suciu in 2004, but no satisfactory
extension of this observation to other arrangements was achieved then. 
We can now give a generalization of the previous example, by realizing all the 
Stallings-Bieri groups as arrangement groups. Further connections between
Bestvina-Brady type groups and fundamental groups of hypersurface complements
will be pursued elsewhere.

\begin{prop} 
Let $M_k:=\P^{k-1}\setminus\A$ and $k>3$, where $\A$ is the hyperplane 
arrangement in $\P^{k-1}$ with coordinates $x_1,\dots,x_k$, given by 
\[
Q=x_1\cdots x_k(x_1-x_2)(x_2-x_3)\cdots(x_{k-2}-x_{k-1})(x_{k-1}-x_{k})(x_k-\lambda x_1),
\]
$\l\neq 0,1$. Then the group $G_k=\pi_1(M_k)$ has type $\text{F}_{k-1}$, but not $\text{F}_k$.
\end{prop}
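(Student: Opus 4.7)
The strategy is to realize $G_k$ explicitly as a Stallings--Bieri kernel $\ker(\F_2^k\to\Z)$ by identifying $M_k$ with the fiber of a multiplication map between toric arrangement complements. First I would introduce the $k$ pencils coming from the codimension--$2$ triple points of $\A$: set $f_i:=x_{i+1}/x_i$ for $i=1,\dots,k-1$, and $f_k:=\lambda x_1/x_k$. Since the hyperplanes $\{x_i=0\}$, $\{x_i-x_{i+1}=0\}$, and $\{x_k-\lambda x_1=0\}$ are all removed from $\A$, each $f_i$ is a regular morphism $M_k\to\C^*\setminus\{1\}$, with $f_1\cdots f_k=\lambda$ by direct computation. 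Setting
\[
V^{\circ}:=\{(u_1,\dots,u_k)\in(\C^*\setminus\{1\})^k \mid u_1\cdots u_k=\lambda\},
\]
the combined map $\Phi:=(f_1,\dots,f_k):M_k\to V^{\circ}$ is a biregular isomorphism, with explicit inverse $(u_1,\dots,u_k)\mapsto[1:u_1:u_1u_2:\cdots:u_1\cdots u_{k-1}]$; the conditions $u_i\neq 0,1$ and $\prod u_i=\lambda\neq 0,1$ ensure that this inverse avoids every hyperplane of $\A$.

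Next, let $V_0:=(\C^*\setminus\{1\})^k$, a product of $k$ three-punctured spheres, hence aspherical with $\pi_1(V_0)=\F_2^k$. The multiplication map $\mu:V_0\to\C^*$, $\mu(u_1,\dots,u_k)=u_1\cdots u_k$, satisfies $V^{\circ}=\mu^{-1}(\lambda)$. Since $\mu$ is a smooth algebraic submersion and its fibers over every $c\in\C^*$ are toric arrangement complements of the same combinatorial type, Verdier's theorem (every algebraic morphism is a locally trivial topological fibration over a Zariski open of the target) ensures that $\mu$ is a Serre fibration in a neighborhood of $\lambda$. The long exact sequence of homotopy, combined with $\pi_2(\C^*)=0$ and connectedness of $V^{\circ}$, then yields
\[
1\to G_k\to\F_2^k\to\Z\to 1,
\]
and a direct computation of winding numbers identifies the right map on the $i$-th $\F_2$-factor as $a_i\mapsto 1$, $b_i\mapsto 0$, where $a_i$ and $b_i$ are meridians around $0$ and $1$ respectively; in particular its restriction to each factor is surjective.

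Finally, I would invoke the Stallings--Bieri theorem~\cite{Bi1,Bi2}: the kernel of any homomorphism $\F_2^n\to\Z$ restricting to a surjection on each factor is of type $\text{FP}_{n-1}$ but not $\text{FP}_n$. Applied with $n=k$ this shows that $G_k$ is $\text{FP}_{k-1}$ but not $\text{FP}_k$; since $G_k$ is finitely presented (being a quasi-projective group in $\QP$), these finiteness conditions upgrade to $\text{F}_{k-1}$ but not $\text{F}_k$. The chief technical obstacle is justifying the fibration property of $\mu$ at the end of the second paragraph, because $\mu$ is not proper and Ehresmann's theorem does not apply directly; one may instead rely on Verdier-type stratification arguments, or construct explicit local trivializations by integrating a vector field on $V_0$ that lifts $\partial/\partial c$ on the base and stays transverse to the removed hyperplanes $\{u_i=1\}$.
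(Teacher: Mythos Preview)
Your proposal is correct and follows essentially the same strategy as the paper: both identify $M_k$ with the fiber of the multiplication map $\mu:(\C\setminus\{0,1\})^k\to\C^*$ over $\lambda$, deduce the exact sequence $1\to G_k\to(\F_2)^k\to\Z\to 1$, and invoke the Bieri--Stallings result. You are more explicit than the paper in writing down the biregular isomorphism $\Phi:M_k\to V^\circ$ and its inverse (the paper simply asserts that the generic fiber is homeomorphic to $M_k$), and you correctly flag the non-properness issue for the fibration claim, which the paper handles by observing that the locus over which local triviality fails (the fiber over $1$) has real codimension at least~$2$, so the homotopy exact sequence still identifies $\pi_1$ of the fiber with the kernel.
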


\begin{proof}
Consider the following map 
$$
(\C\setminus \{0,1\})^k \ \rightmap{\psi}\ \C^*, 
\quad
(x_1,\dots,x_k)\mapsto x_1\cdots x_k.
$$
At the level of fundamental groups it induces Bieri's map $(\F_2)^k \to \Z$ (cf.~\cite{Bi1}).
It is easy to see that $\psi$ is a locally trivial fibration outside the fiber of~1. 
The generic fiber is homeomorphic to $M_k$. Since the critical locus is of real codimension 
greater than~2, the kernel of Bieri's map is $G_k$ and the result follows from Bieri's arguments.
\end{proof}


\begin{thebibliography}{10}

\bibitem{Al}
D.~Allcock, \emph{Braid pictures for {A}rtin groups}, Trans. Amer. Math. Soc.
  \textbf{354} (2002), no.~9, 3455--3474.

\bibitem{ABCKT}
J.~Amor{\'o}s, M.~Burger, K.~Corlette, D.~Kotschick, and D.~Toledo,
  \emph{Fundamental groups of compact {K}\"ahler manifolds}, Mathematical
  Surveys and Monographs, vol.~44, American Mathematical Society, Providence,
  RI, 1996.

\bibitem{Ar2}
D.~Arapura, \emph{Fundamental groups of smooth projective varieties}, Current
  topics in complex algebraic geometry ({B}erkeley, {CA}, 1992/93), Math. Sci.
  Res. Inst. Publ., vol.~28, Cambridge Univ. Press, Cambridge, 1995, pp.~1--16.

\bibitem{Ar1}
\bysame, \emph{Geometry of cohomology support loci for local systems. {I}}, J.
  Algebraic Geom. \textbf{6} (1997), no.~3, 563--597.

\bibitem{ABR}
D.~Arapura, P.~Bressler, and M.~Ramachandran, \emph{On the fundamental group of
  a compact {K}\"ahler manifold}, Duke Math. J. \textbf{68} (1992), no.~3,
  477--488.

\bibitem{AC-prep}
E.~Artal and J.I. Cogolludo, \emph{On the connection between fundamental groups
  and pencils with multiple fibers}, In preparation.

\bibitem{ACM-prep}
E.~Artal, J.I. Cogolludo, and D.~Matei, \emph{Characteristic varieties of 
quasi-projective manifolds and orbifolds}, 
Preprint available at \texttt{arXiv:1005.4761v1 [math.AG]}.

\bibitem{ZPairs}
E.~Artal, J.I. Cogolludo, and H.~Tokunaga, \emph{A survey on {Z}ariski pairs},
  Algebraic geometry in {E}ast {A}sia---{H}anoi 2005, Adv. Stud. Pure Math.,
  vol.~50, Math. Soc. Japan, Tokyo, 2008, pp.~1--100.

\bibitem{Be}
A.~Beauville, \emph{Annulation du {$H\sp 1$} pour les fibr\'es en droites
  plats}, Complex algebraic varieties ({B}ayreuth, 1990), Lecture Notes in
  Math., vol. 1507, Springer, Berlin, 1992, pp.~1--15.

\bibitem{Ber}
J.~Bertin, \emph{Sur la topologie des surfaces affines r\'egl\'ees}, Compositio
  Math. \textbf{47} (1982), no.~1, 71--83.

\bibitem{BB}
M.~Bestvina and N.~Brady, \emph{Morse theory and finiteness properties of
  groups}, Invent. Math. \textbf{129} (1997), no.~3, 445--470.

\bibitem{Bi2}
R.~Bieri, \emph{Normal subgroups in duality groups and in groups of
  cohomological dimension {$2$}}, J. Pure Appl. Algebra \textbf{7} (1976),
  no.~1, 35--51.

\bibitem{Bi1}
\bysame, \emph{Homological dimension of discrete groups}, second ed., Queen
  Mary College Mathematical Notes, Queen Mary College Department of Pure
  Mathematics, London, 1981.

\bibitem{Bo}
N.~Bourbaki, \emph{\'{E}l\'ements de math\'ematique. {F}asc. {XXXIV}. {G}roupes
  et alg\`ebres de {L}ie. {C}hapitre {IV}: {G}roupes de {C}oxeter et syst\`emes
  de {T}its. {C}hapitre {V}: {G}roupes engendr\'es par des r\'eflexions.
  {C}hapitre {VI}: syst\`emes de racines}, Actualit\'es Scientifiques et
  Industrielles, No. 1337, Hermann, Paris, 1968.

\bibitem{BHMS1}
M.R. Bridson, J.~Howie, C.F. Miller~III, and H.~Short, \emph{Finitely presented
  residually free groups}, Preprint available at \texttt{arXiv:0809.3704v1
  [math.GR]}.

\bibitem{BHMS2}
\bysame, \emph{Subgroups of direct products of limit groups}, Annals of
  Mathematics (2009), no.~170, 1447--1467.

\bibitem{BM}
M.R. Bridson and C.F. Miller~III, \emph{Structure and finiteness properties of
  subdirect products of groups}, Proc. Lond. Math. Soc. (3) \textbf{98} (2009),
  no.~3, 631--651.

\bibitem{Br}
E.~Brieskorn and K.~Saito, \emph{Artin-{G}ruppen und {C}oxeter-{G}ruppen},
  Invent. Math. \textbf{17} (1972), 245--271.

\bibitem{Bu}
N.~Budur, \emph{Unitary local systems, multiplier ideals, and polynomial
  periodicity of {H}odge numbers}, Adv. Math. \textbf{221} (2009), no.~1,
  217--250.

\bibitem{Cat1}
F.~Catanese, \emph{Fibred {K}\"ahler and quasi-projective groups}, Adv. Geom.
  (2003), no.~suppl., S13--S27.

\bibitem{ji-thesis}
J.I. Cogolludo, \emph{Topological invariants of the complement to arrangements
  of rational plane curves}, Mem. Amer. Math. Soc. \textbf{159} (2002),
  no.~756, xiv+75.

\bibitem{ji-20100}
J.I. Cogolludo and V.~Florens, \emph{Twisted {A}lexander polynomials of plane
  algebraic curves}, J. Lond. Math. Soc. (2) \textbf{76} (2007), no.~1,
  105--121.

\bibitem{CFR}
D.~Cohen, M.~Falk, and R.~Randell, \emph{Representations of arrangement
  groups}, talk at Lib60ber conference, Jaca, June 2009.

\bibitem{Cr}
J.~Crisp, \emph{Injective maps between {A}rtin groups}, Geometric group theory
  down under ({C}anberra, 1996), de Gruyter, Berlin, 1999, pp.~119--137.

\bibitem{CP}
J.~Crisp and L.~Paris, \emph{Artin groups of type {$B$} and {$D$}}, Adv. Geom.
  \textbf{5} (2005), no.~4, 607--636.

\bibitem{Du}
Nguyen~V. D., \emph{The fundamental groups of the spaces of regular orbits of
  the affine {W}eyl groups}, Topology \textbf{22} (1983), no.~4, 425--435.

\bibitem{De}
P.~Deligne, \emph{Les immeubles des groupes de tresses g\'en\'eralis\'es},
  Invent. Math. \textbf{17} (1972), 273--302.

\bibitem{DGMS}
P.~Deligne, P.~Griffiths, J.~Morgan, and D.~Sullivan, \emph{Real homotopy
  theory of {K}\"ahler manifolds}, Invent. Math. \textbf{29} (1975), no.~3,
  245--274.

\bibitem{Delzant}
T.~Delzant, \emph{Trees, valuations and the {G}reen-{L}azarsfeld set}, Geom.
  Funct. Anal. \textbf{18} (2008), no.~4, 1236--1250.

\bibitem{Di4}
A.~Dimca, \emph{Characteristic varieties and constructible sheaves}, Atti
  Accad. Naz. Lincei Cl. Sci. Fis. Mat. Natur. Rend. Lincei (9) Mat. Appl.
  \textbf{18} (2007), no.~4, 365--389.

\bibitem{Di3}
\bysame, \emph{On the irreducible components of characteristic varieties}, An.
  \c Stiin\c t. Univ. ``Ovidius'' Constan\c ta Ser. Mat. \textbf{15} (2007),
  no.~1, 67--73.

\bibitem{DP}
A.~Dimca and {\c{S}}.~Papadima, \emph{Finite {G}alois covers, cohomology jump
  loci, formality properties, and multinets}, Preprint available at
  \texttt{arXiv:0906.1040v1 [math.AG]}.

\bibitem{DPS1}
A.~Dimca, {\c{S}}.~Papadima, and A.I. Suciu, \emph{Formality, {A}lexander
  invariants and a question of {S}erre}, Preprint available at
  \texttt{arXiv:math/0512480v3 [math.AT]}.

\bibitem{DPS4}
\bysame, \emph{Alexander polynomials: essential variables and multiplicities},
  Int. Math. Res. Not. IMRN (2008), no.~3, Art. ID rnm119, 36~pp.

\bibitem{DPS2}
\bysame, \emph{Quasi-{K}\"ahler {B}estvina-{B}rady groups}, J. Algebraic Geom.
  \textbf{17} (2008), no.~1, 185--197.

\bibitem{DPS3}
\bysame, \emph{Non-finiteness properties of fundamental groups of smooth
  projective varieties}, J. Reine Angew. Math. \textbf{629} (2009), 89--105.

\bibitem{DPS0}
\bysame, \emph{Topology and geometry of cohomology jump loci}, Duke Math. J.
  \textbf{148} (2009), no.~3, 405--457.

\bibitem{MF}
R.~Friedman and J.W. Morgan, \emph{Smooth four-manifolds and complex surfaces},
  Ergebnisse der Mathematik und ihrer Grenzgebiete (3), vol.~27,
  Springer-Verlag, Berlin, 1994.

\bibitem{Gr}
M.~Gromov, \emph{Sur le groupe fondamental d'une vari\'et\'e k\"ahl\'erienne},
  C. R. Acad. Sci. Paris S\'er. I Math. \textbf{308} (1989), no.~3, 67--70.

\bibitem{Ko2}
T.~Kohno, \emph{On the holonomy {L}ie algebra and the nilpotent completion of
  the fundamental group of the complement of hypersurfaces}, Nagoya Math. J.
  \textbf{92} (1983), 21--37.

\bibitem{Li8}
A.~Libgober, \emph{Alexander polynomial of plane algebraic curves and cyclic
  multiple planes}, Duke Math. J. \textbf{49} (1982), no.~4, 833--851.

\bibitem{Li5}
\bysame, \emph{Groups which cannot be realized as fundamental groups of the
  complements to hypersurfaces in {${\bf C}\sp N$}}, Algebraic geometry and its
  applications ({W}est {L}afayette, {IN}, 1990), Springer, New York, 1994,
  pp.~203--207.

\bibitem{Li3}
\bysame, \emph{Characteristic varieties of algebraic curves}, Applications of
  algebraic geometry to coding theory, physics and computation ({E}ilat, 2001),
  NATO Sci. Ser. II Math. Phys. Chem., vol.~36, Kluwer Acad. Publ., Dordrecht,
  2001, pp.~215--254.

\bibitem{Li4}
\bysame, \emph{First order deformations for rank one local systems with a
  non-vanishing cohomology}, Topology Appl. \textbf{118} (2002), no.~1-2,
  159--168, Arrangements in Boston: a Conference on Hyperplane Arrangements
  (1999).

\bibitem{Li9}
\bysame, \emph{Non vanishing loci of {H}odge numbers of local systems},
  Manuscripta Math. \textbf{128} (2009), no.~1, 1--31.

\bibitem{Mo}
J.W. Morgan, \emph{The algebraic topology of smooth algebraic varieties}, Inst.
  Hautes \'Etudes Sci. Publ. Math. (1978), no.~48, 137--204.

\bibitem{Op}
E.~M. Opdam, \emph{Root systems and hypergeometric functions. {III}},
  Compositio Math. \textbf{67} (1988), no.~1, 21--49.

\bibitem{Or}
S.Yu. Orevkov, \emph{Realizability of a braid monodromy by an algebraic
  function in a disk}, C. R. Acad. Sci. Paris S\'er. I Math. \textbf{326}
  (1998), no.~7, 867--871.

\bibitem{Ru2}
L.~Rudolph, \emph{Algebraic functions and closed braids}, Topology \textbf{22}
  (1983), no.~2, 191--202.

\bibitem{Ru1}
\bysame, \emph{Braided surfaces and {S}eifert ribbons for closed braids},
  Comment. Math. Helv. \textbf{58} (1983), no.~1, 1--37.

\bibitem{Se}
J.-P. Serre, \emph{Sur la topologie des vari\'et\'es alg\'ebriques en
  caract\'eristique {$p$}}, Symposium internacional de topolog\'\i a algebraica
  {I}nternational symposium on algebraic topology, Universidad Nacional
  Aut\'onoma de M\'exico and UNESCO, Mexico City, 1958, pp.~24--53.

\bibitem{Si1}
C.~Simpson, \emph{Subspaces of moduli spaces of rank one local systems}, Ann.
  Sci. \'Ecole Norm. Sup. (4) \textbf{26} (1993), no.~3, 361--401.

\bibitem{St}
J.~Stallings, \emph{A finitely presented group whose 3-dimensional integral
  homology is not finitely generated}, Amer. J. Math. \textbf{85} (1963),
  541--543.

\bibitem{Toledo}
D.~Toledo, \emph{Projective varieties with non-residually finite fundamental
  group}, Inst. Hautes \'Etudes Sci. Publ. Math. \textbf{77} (1993), 103--119.

\bibitem{Wa2}
C.~T.~C. Wall, \emph{Finiteness conditions for {${\rm CW}$}-complexes}, Ann. of
  Math. (2) \textbf{81} (1965), 56--69.

\bibitem{Wa1}
\bysame, \emph{Finiteness conditions for {${\rm CW}$} complexes. {II}}, Proc.
  Roy. Soc. Ser. A \textbf{295} (1966), 129--139.

\bibitem{Zariski-irregularity}
O.~Zariski, \emph{On the irregularity of cyclic multiple planes}, Ann. of Math.
  (2) \textbf{32} (1931), no.~3, 485--511.

\bibitem{Zariski-algebraic}
\bysame, \emph{Algebraic surfaces}, Springer, Heidelberg, 1971, 2nd. ed.

\end{thebibliography}
\end{document}